\definecolor{dkblue}{RGB}{1,31,91} 
\newcommand{\newK}{\tilde{K}}
\newcommand{\qlep}{{|q|\le \frac{1}{2}|p|^{1/m}}}
\newcommand{\qgep}{{|q|\ge \frac{1}{2}|p|^{1/m}}}
\newcommand{\eqdef }{\overset{\mbox{\tiny{def}}}{=}}
\newcommand{\pv}{p}
\newcommand{\pZ}{\pv^0}
\newcommand{\qv}{q}
\newcommand{\qZ}{\qv^0}
\newcommand{\rth}{{\mathbb{R}^3}}
\newcommand{\rfo}{{\mathbb{R}^4}}
\newcommand{\singA}{a}
\newcommand{\singB}{b}
\newcommand{\singS}{\rho}
\newcommand{\zetaL}{\zeta_L}
\newcommand{\zetaTL}{\tilde{\zeta}_L}
\newcommand{\zetaLTone}{\tilde{\zeta}_{L}^1}
\newcommand{\zetaLTtwo}{\tilde{\zeta}_{L}^2}
\newcommand{\zetaTone}{\tilde{\zeta}_1}
\newcommand{\zetaTZ}{\tilde{\zeta}_{0}}
\newcommand{\zetaZ}{\zeta_{0}}
\newcommand{\zetaTZm}{\tilde{\zeta}_{0,m}}
\newcommand{\zetaZm}{{\zeta}_{0,m}}
\newcommand{\zetaTLm}{\tilde{\zeta}_{L,m}}
\newcommand{\zetaLm}{{\zeta}_{L,m}}
\theoremstyle{definition}
\newtheorem{theorem}{Theorem}
\newtheorem{lemma}[theorem]{Lemma}
\newtheorem{proposition}[theorem]{Proposition}
\newtheorem{remark}[theorem]{Remark}
\numberwithin{equation}{section}
\numberwithin{theorem}{section}
\begin{document}

\keywords{Boltzmann Equation, Special Relativity, Non-Cutoff, Angular Singularity, Collisional Kinetic Theory.}
\subjclass[2010]{Primary 35Q20, 35R11, 76P05, 83A05, 82C40, 35B65, 26A33. }

%
\title[Frequency multiplier estimates]{Frequency multiplier estimates for the linearized relativistic Boltzmann operator without angular cutoff}

\author[J. W. Jang]{Jin Woo Jang$^\dagger$}
\address{$^\dagger$Institute for Applied Mathematics, University of Bonn, 53115 Bonn, Germany. \href{mailto:jangjinw@iam.uni-bonn.de}{jangjinw@iam.uni-bonn.de} }
\thanks{$^\dagger$Supported by the German DFG grant CRC 1060 and previously supported by the Korean IBS grant IBS-R003-D1 and partially by the NSF grant DMS-1500916 of the USA}

\author[R. M. Strain]{Robert M. Strain$^{\ddagger}$}
\address{$^\ddagger$Department of Mathematics, University of Pennsylvania, Philadelphia, PA 19104, USA. \href{mailto:strain@math.upenn.edu}{strain@math.upenn.edu}}
\thanks{$^\ddagger$Partially supported by the NSF grant DMS-1764177 of the USA}

\begin{abstract}
This paper is concerned with the relativistic Boltzmann equation without angular cutoff. The non-cutoff theory for the relativistic Boltzmann equation has been rarely studied even under a smallness assumption on the initial data due to the lack of understanding on the spectrum and the need for coercivity estimates on the linearized collision operator.  Namely, it is crucial to obtain the sharp asymptotics for the frequency multiplier to obtain this coercivity that has never been established before.  In this paper, we prove the sharp asymptotics for the frequency multiplier for a general relativistic scattering kernel without angular cutoff. As a consequence of our calculations, we further explain how the well known change of variables $p' \to p$ is not well defined in the special relativistic context. 
\end{abstract}

\thispagestyle{empty}

\maketitle
\tableofcontents

\section{Introduction}

The Boltzmann equation is a fundamental mathematical model for the dynamics of a gas represented as a collection of molecules. This equation was derived by Ludwig Boltzmann \cite{MR0158708} in 1872; this is a model for the collisional dynamics of a non-relativistic gas of particles.  For the collisional dynamics of a special relativistic gas, particles may have speeds that are comparable to the speed of light.  Lichnerowicz and Marrot \cite{MR0004796} have derived the relativistic Boltzmann equation in 1940. The relativistic Boltzmann equation has been a fundamental model for fast moving particles whose collisional dynamics are taken into account.

The special relativistic Boltzmann equation is given by
\begin{equation}
\label{RBE}
p^\mu\partial_\mu f={p^0}\partial_tf+cp\cdot\nabla_xf= C(f,f),
\end{equation}
where $c>0$ is the speed of light and the collision operator $C(f,f)$ can be written as
\begin{equation}
\label{Colop}
C(f,h)=\int_{\mathbb{R}^3}\frac{dq}{{q^0}}\int_{\mathbb{R}^3}\frac{dq'\  }{{q'^0}}\int_{\mathbb{R}^3}\frac{dp'\  }{{p'^0}} W(p,q|p',q')[f(p')h(q')-f(p)h(q)].
\end{equation}
Here, the transition rate $W(p,q|p',q')$ is defined as
\begin{equation}\label{transition.rate.RBE}
    W(p,q|p',q')=\frac{c}{2}s\sigma(g,\theta)\delta^{(4)}(p^\mu +q^\mu -p'^\mu -q'^\mu),
\end{equation}
where $\sigma(g,\theta)$ stands for the scattering kernel measuring the interactions between gas particles and the Dirac $\delta$-function expresses the conservation of energy and momentum for each collision.

This equation \eqref{RBE} is a relativistic generalization of the Newtonian Boltzmann equation:
\begin{equation}\label{newtonian.BE}
    \partial_tf +v\cdot\nabla_xf=\int_{\rth}dv_*\int_{\mathbb{S}^2}d\omega B(v-v_*,\omega)[f(v')f(v'_*)-f(v)f(v_*)],
\end{equation}
where the pre-post collisional velocities are
\begin{equation}\notag
    v'=\frac{v+v_*}{2}+\frac{|v-v_*|}{2}\omega, 
    \quad 
    v'_*=\frac{v+v_*}{2}-\frac{|v-v_*|}{2}\omega, \quad v, v_* \in \rth,
\end{equation}
and the collision kernel $B$ depends only on the relative velocity $|v-v_*|$ and the scattering angle $\omega\in \mathbb{S}^2$.
The mathematical analysis of the Boltzmann equation such as the well-posedness of the equation or the regularity of the solution crucially depends on the assumptions on the scattering kernel $B(v-v_*,\omega)$. 
The kernel $B$ is in general assumed to be in the form of a product in its arguments as 
$$B(v-v_*,\omega)=\Psi(|v-v_*|)b_0(\omega),$$ where both $\Psi$ and $b_0$ are assumed to be non-negative. This assumption is general and it includes the varied kinds of  collision kernels such as the hard-sphere collision kernel $\Psi(|v-v_*|)\approx |v-v_*|$, the collision kernel for Maxwellian molecules $\Psi(|v-v_*|)\approx 1$, the collision kernel for the  inverse-power law potential $\psi(r)=\frac{1}{r^{p-1}}$ with $B\approx |v-v_*|^{\gamma} \theta^{-\gamma'} b'_0(\theta)$ where $\gamma=\frac{p-5}{p-1}$, $\gamma'=\frac{2p}{p-1}$, $b'_0$ is bounded, and $\cos\theta =\frac{v-v_*}{|v-v_*|}\cdot \omega$,  and the assumption also includes many other kernels.

\subsection{Motivation}\label{sec:motivation}
Regarding the assumptions on the angular kernel $b_0(\omega)$, Grad \cite{MR0156656} proposed a ``cutoff" assumption. Namely, we say that the Boltzmann equation is in the ``cutoff" regime if $b_0(\omega)\in L^1(\mathbb{S}^2)$ or $b_0(\omega)\in L^\infty(\mathbb{S}^2)$.  Otherwise, we call it the Boltzmann equation without angular cutoff; sometimes this is called the ``non-cutoff'' regime.  This cut-off assumption is indeed very powerful in the mathematical analysis, as it removes the singularity from the angular kernel and allows one to split the gain and the loss terms of the Boltzmann operator.

However, it has been well-known that the regularity of a solution to the Boltzmann equation depends crucially on the assumption. For the angular kernel with the Grad cutoff, it has been known to propagate singularities \cite{MR1798557, MR2435186}. On the other hand, it has been known that the Boltzmann equation without angular cutoff has smoothing effects \cite{Li94,MR2679369,1909.12729,MR4195746}. In the case without angular cutoff, one has to make use of the cancellation between the gain and the loss terms to estimate the angular singularity.  Without angular cutoff the Boltzmann operator behaves as the fractional Laplacian on a lifted paraboloid of the energy-momentum four-vector \cite{MR2807092}.

The existence theory for the Boltzmann equation without angular cutoff was developed in the class of weak solutions via the method of renormalization \cite{MR1857879}.  Further the existence and uniqueness theory was developed using the energy method via linearization nearby Maxwellian equilibrium in \cite{MR2784329, MR2795331, MR2793203, MR2847536}.  This current paper is mainly concerned with the energy method nearby equilibrium.  One of the most crucial parts in the proof via the energy method is to create a positive dissipation term in the energy inequality. It turns out that the coercivity estimates for the dissipation term crucially depends on the asymptotics of the \textit{frequency multiplier} \eqref{tildezeta} whose explicit form will be introduced later on \eqref{def.zeta} and \eqref{zetaK.def}.

Regarding the Newtonian Boltzmann equation \eqref{newtonian.BE}, the estimates on the asymptotics of the \textit{frequency multiplier} have been proved by Pao \cite{Pao} using the symmetry of the linearized operator and using the sharp pointwise estimates of certain special functions.  This can also be proven using the procedure outlined in Section \ref{sec:main.difficult}. These asymptotics have been crucially used in the coercivity estimates and the spectral theory for the linearized Boltzmann operator in \cite{MR2322149}.  These coercivity estimates, and in addition the Newtonian cancellation lemma from \cite{ADVW} has been crucially used for the proof of the global in time wellposedness in \cite{MR2784329,MR2795331,MR2793203} nearby equilibrium.

In this paper, we are interested in proving analogous results for the relativistic Boltzmann equation \eqref{RBE}. Namely, we would like to establish the estimates on the asymptotics of the relativistic frequency multipliers for the linearized Boltzmann operator.  However, in the relativistic case, it turns out  that the collisional structure is substantially different \cite{ChapmanJangStrain2020}, and the crucial change of variable $p'\to p$ in the non-relativistic cancellation lemma does not hold in the relativistic case (which is explained in Remark \ref{cancel.problem}).  This also shows the major difficulty in the relativistic case versus  the non-relativistic case (in regards to the lack of the change of variables from $p' \to p$) and in regards to the inability to use the standard proof of the behavior of the frequency multiplier term $\zeta$ from the non-relativstic case. This difficulty will be revisited in Section \ref{sec:main.difficult} after we explain the explicit forms of the relativistic frequency multipliers.

Before we introduce our main theorem and strategies, we first introduce the notations, the hypothesis on the scattering kernel, and the linearization around equilibrium of the relativsitic Boltzmann equation.

\subsection{Notations}\label{Lor}
The relativistic momentum of a particle can be usefully denoted by a four-vector representation $p^\mu$ where $\mu=0,1,2,3$. Without loss of generality we normalize the mass of each particle $m=1$.
We raise and lower the indices with the Minkowski metric $p_\mu=\eta_{\mu\nu}p^\nu$, where the metric is defined as $\eta_{\mu\nu}=\text{diag}(-1, 1, 1, 1).$
The signature of the metric throughout this paper is $( - + + + )$.
With $p=(p_1,p_2,p_3)\in \rth$, we write $p^\mu=({p^0},p)$, where ${p^0}$ is the energy of a relativistic particle with momentum $p$.  The energy is defined as ${p^0}=\sqrt{c^2+|p|^2}$.
The product between the four-vectors with raised and lowered indices is the Lorentz inner product which is given by
$$
p^\mu q_\mu=-{p^0}{q^0}+\sum^3_{i=1} p_iq_i.
$$
Note that the momentum for each particle satisfies the mass shell condition $p^\mu p_\mu=-c^2$ with ${p^0}>0$. We also note that the product $p^\mu q_\mu$ is a Lorentz invariant.

By expanding the relativistic Boltzmann equation \eqref{RBE} and dividing both sides by ${p^0}$ we write the relativistic Boltzmann equation as
\begin{equation}
    \label{RBE.2}
\partial_t F+\hat{p}\cdot \nabla_x F= Q(F,F),
\end{equation}
where $Q(F,F)=C(F,F)/{p^0}$ is from \eqref{Colop} and \eqref{omegaint}.  The normalized velocity $\hat{p}$ is further given by
$$
\hat{p}=c\frac{p}{{p^0}}=\frac{p}{\sqrt{1+|p|^2/c^2}}.
$$ 
We will normalize the speed of light to be $c=1$ without loss of generality in the rest of this paper.

We also define ``$s$'' and ``$g$'' which respectively stand for the square of the energy and the relative momentum in the \textit{center-of-momentum} system, $p+q=0$, as
\begin{equation}
\label{s}
\begin{split}
s&\eqdef s(p^\mu,q^\mu)
=-(p^\mu+q^\mu)(p_\mu+q_\mu)=2(-p^\mu q_\mu+1)
\\
&=2(p^0q^0-p\cdot q +1)\geq 0.
\end{split}
\end{equation}
and
\begin{equation}
\label{g}
g\eqdef g(p^\mu,q^\mu)=\sqrt{(p^\mu-q^\mu)(p_\mu-q_\mu)} 
=\sqrt{2(p^0q^0-p\cdot q -1)}.
\end{equation}
Note that $s=g^2+4$.   
Now we record a useful sharp inequality for $g$:
\begin{equation}\label{g.ineq.sharp}
\frac{\sqrt{|p-q|^2+|p\times q|^2}}{\sqrt{p^0q^0}}\leq \sqrt{2\frac{|p-q|^2+|p\times q|^2}{ \pZ  \qZ +p\cdot q+1}} =  g   \leq |p-q|.
\end{equation}
See \cite{GS3} and Lemma \ref{lem:useful.ests} below. Similarly we will define $\bar{g}$ as the relative momentum between $p'^\mu$ and $p^\mu$. It is then written as
\begin{equation}
\label{gbar}
\begin{split}
\bar{g}&\eqdef g(p'^\mu,p^\mu)=\sqrt{(p'^\mu-p^\mu)(p'_\mu-p_\mu)}=\sqrt{2(-p'^\mu p_\mu-1)}\\&=\sqrt{2(p'^0p^0-p'\cdot p -1)}=\sqrt{2\frac{|p-p'|^2+|p\times p'|^2}{ p^0  p'^0 +p\cdot p'+1}}.\\
\end{split}
\end{equation}
In the same manner, we define the relative momentum between $p'^\mu$ and $q^\mu$ as
\begin{multline}\label{gtilde}
\tilde{g} \eqdef g(p'^\mu,q^\mu)=\sqrt{(p'^\mu-q^\mu)(p'_\mu-q_\mu)} =\sqrt{2(-p'^\mu q_\mu-1)}\\=\sqrt{2(p'^0q^0-p'\cdot q -1)}=\sqrt{2\frac{|p'-q|^2+|p'\times q|^2}{ p'^0  q^0 +p'\cdot q+1}}.
\end{multline}
From \eqref{transition.rate.RBE}, the conservation of energy and momentum for elastic collisions is described as
\begin{equation}\label{conservation}
p^\mu+q^\mu=p'^\mu+q'^\mu.
\end{equation}
The scattering angle $\theta$ is then defined by
\begin{equation}\notag 
\cos\theta=\frac{(p^\mu-q^\mu)(p'_\mu-q'_\mu)}{g^2}.
\end{equation}
Together with the conservation of energy and momentum as in \eqref{conservation}, it can be shown that the angle and $\cos\theta$ are well-defined \cite{MR1379589}. 

Note that the numerator of $\cos\theta$ can be further written as 
\begin{equation}
 \label{cos}
 \begin{split}
(p^\mu-q^\mu)(p'_\mu-q'_\mu)=&(p^\mu-q^\mu)(p_\mu+q_\mu-2q'_\mu)\\
=& (p^\mu-q^\mu)(p_\mu-q_\mu)+2 (p^\mu-q^\mu)(q_\mu-q'_\mu)\\
=&g^2+2(p^\mu-q^\mu)(q_\mu-q'_\mu)\\
=& g^2 +2(p^\mu-p'^\mu+p'^\mu-q^\mu)(p'_\mu-p_\mu)\\
=&g^2 -2(p'^\mu-p^\mu)(p'_\mu-p_\mu)+2(p'^\mu-q^\mu)(p'_\mu-p_\mu)\\
=&g^2 -2\bar{g}^2+2(p'^\mu-q^\mu)(p'_\mu-p_\mu)
=g^2-2\bar{g}^2.
 \end{split}
 \end{equation} 
 As above, we note that it follows from the collision geometry \eqref{conservation} that 
 $$
 (p'^\mu-q^\mu)(p'_\mu-p_\mu)=0.
 $$
 Therefore, using \eqref{cos} with \eqref{g} and \eqref{gbar} we can write
\begin{equation}\label{cosine.angle.formula}
    1-2\sin^2\frac{\theta}{2} =  \cos\theta=1-2\frac{\bar{g}^2}{g^2},
\end{equation}
 and hence we obtain that $\theta\approx \frac{\bar{g}}{g}$. This estimate will be used frequently in the rest of the paper.

\begin{remark}\label{angle.remark}
Since we are dealing with the non-cutoff relativistic Boltzmann equation then there will be an angular singularity when $\cos\theta=1$ as in \eqref{angassumption}.  The purpose of this remark is to explain the collisional geometry when $\cos\theta=1$.  
By \eqref{cosine.angle.formula} when $\cos\theta=1$ we have $\frac{\bar{g}^2}{g^2} =0$ which means 
$$
0=\bar{g}^2=(p^\mu-p'^\mu)(p_\mu-p'_\mu).
$$
Equivalently, this means that 
$$
({p'^0}-{p^0})^2=|p'-p|^2.
$$
And this implies that ${p^0}={p'^0}$ and $p=p'$ because 
$$
|{p'^0}-{p^0}| = \left|\frac{|p'|^2-|p|^2}{{p'^0}+{p^0}}\right|< |p'-p|.
$$
Therefore, if $\cos\theta=1$, we have $p'^\mu=p^\mu$ and also $q'^\mu=q^\mu$ by \eqref{conservation}. 
\end{remark}

Next we would like to introduce the relativistic Maxwellian which models the steady state solutions or equilibrium solutions also known as J\"uttner solutions.
These are characterized as a particle distribution which maximizes the entropy subject to constant mass, momentum, and energy. They are given by
$$
J(p)=\frac{e^{-\left(\frac{c}{k_BT}\right){p^0}}}{4\pi ck_BTK_2(\frac{c^2}{k_BT})},
$$
where $k_B$ is Boltzmann constant, $T$ is the temperature, and $K_2$ stands for the Bessel function $K_2(z)=\frac{z^2}{2}\int_1^\infty dt\ e^{-zt}(t^2-1)^\frac{3}{2}.$
  Then we obtain that the normalized  relativistic Maxwellian is given by
\begin{equation}\label{Juttner.equi}
J(p)=\frac{e^{-{p^0}}}{4\pi}.
\end{equation}
Throughout the rest of this paper, without loss of generality we will normalize all physical constants to 1 including the speed of light $c=1$.

We now consider the \textit{center-of-momentum} expression for the relativistic collision operator as below. Note that this expression has appeared in the physics literature; see \cite{MR635279}.
For other representations of the operator such as Glassey-Strauss coordinate expression, see \cite{MR1402446}, \cite{MR1105532}, and \cite{GS3}. Also, see \cite{MR2679588} for the relationship between those two representations of the collision operator.
As in \cite{MR635279}, one can reduce the collision operator (\ref{Colop}) using Lorentz transformations and get
\begin{equation}
\label{omegaint}
Q(f,h)
\eqdef
\frac{1}{p^0}C(f,h)
=
\int_\rth dq\int_{\mathbb{S}^2}d\omega\      v_{\text{\o}}\sigma(g,\theta)[f(p')h(q')-f(p)h(q)],
\end{equation}
where $v_{\text{\o}}=v_{\text{\o}}(p,q)$ is the M$\phi$ller velocity given by
\begin{equation}\notag 
v_{\text{\o}}(p,q)
\eqdef
\sqrt{\Big|\frac{p}{{p^0}}-\frac{q}{{q^0}}\Big|^2-\Big|\frac{p}{{p^0}}\times\frac{q}{{q^0}}\Big|^2}=\frac{g\sqrt{s}}{{p^0}{q^0}}.
\end{equation}
Comparing \eqref{Colop} with the reduced version of collision operator \eqref{omegaint}, we can notice that
one of the advantages of this \textit{center-of-momentum} expression of the collision operator is that the reduced integral (\ref{omegaint}) is written in relatively simple terms which only contains
the M$\phi$ller velocity, scattering kernel, and the cancellation between gain and loss terms.

The post-collisional momentum in the \textit{center-of-momentum} expression are written as
\begin{equation} \label{p'}
p'=\frac{p+q}{2}+\frac{g}{2}\Big(\omega+(\xi-1)(p+q)\frac{(p+q)\cdot\omega}{|p+q|^2}\Big),
\end{equation}
\begin{equation}\notag 
q'=\frac{p+q}{2}-\frac{g}{2}\Big(\omega+(\xi-1)(p+q)\frac{(p+q)\cdot\omega}{|p+q|^2}\Big),
\end{equation}
where $\xi\eqdef \frac{p^0+q^0}{\sqrt{s}}$; see for example \cite{MR2765751}.

We will frequently use the modified Bessel function of index zero given by
\begin{equation}\label{bessel0}
    I_0(y)\eqdef \frac{1}{2\pi}\int_0^{2\pi} d\phi \exp(y\cos\phi)
=\frac{1}{\pi}\int_0^{\pi} d\phi \exp(y\cos\phi),
\end{equation}
Throughout the paper, we use the notation $A \lesssim B$ if there exists a positive constant $c>0$ such that $A\le c B$ holds uniformly over the relevant parameters. Also, we use $A\approx B$ if both $A\lesssim B$ and $B\lesssim A$ hold.

We further remark that the decomposition of $q\in\rth $ into two regions $\qgep$ and $\qlep$ for a given $p$ will be crucial for our estimates in the rest of this paper.  Here we will choose some $m>1$ sufficiently large as explained in  Theorem \ref{main.thm}.  Then we will later show that the \textit{frequency multiplier} is lower order on the region $\qgep$.  To that end we introduce the following notation.  Given $h_1=h_1(p,q)$, we define the function $h=h(p)$ as an integral on $\rth $ as 
	$$h(p)=\int_\rth h_1(p,q)dq.$$ In this case, we split the integral into 
	$$\int_\rth = \int_\qgep +\int_\qlep,$$ and abuse the notations to denote each term as
	\begin{equation}\label{convention}\begin{split}[h]_\qgep &= h|_\qgep \eqdef \int_\qgep h_1(p,q)dq,\\
	[h]_\qlep &= h|_\qlep \eqdef  \int_\qlep h_1(p,q)dq.\end{split}\end{equation}
This convention of the notations will be used in a few convenient places in the rest of this paper.

\subsection{Main Hypothesis on the collision kernel $\sigma$}\label{hypo}
The relativistic Boltzmann collision kernel $\sigma(g,\theta)$ is a non-negative function which only depends on the relative velocity $g$ and the scattering angle
$\theta$. We assume that $\sigma$ takes the form of the product in its arguments; i.e., 
\begin{equation}\label{kernel.product}
    \sigma(g,\theta)\eqdef \Phi(g)\sigma_0(\theta).
\end{equation}
In general, we suppose that both $\Phi$ and $\sigma_0$ are non-negative functions.

Without loss of generality, we may assume that the collision kernel $\sigma$ is supported only when $\cos\theta\geq 0$ throught this paper;
i.e., $0\leq \theta \leq \frac{\pi}{2}$.
Otherwise, the following \textit{symmetrization} \cite{MR1379589} will reduce the case:
 \begin{equation}\notag 
\bar{\sigma}(g,\theta)=[\sigma(g,\theta)+\sigma(g,-\theta)]1_{\cos\theta\geq 0},
\end{equation}
where $1_A$ is the indicator function of the set $A$.

We suppose that the angular function $\theta \mapsto \sigma_0(\theta)$ is not locally integrable; for $C>0$, it satisfies
 \begin{equation}
 \label{angassumption}
\frac{1}{C\theta^{1+\gamma}} \leq \sin\theta\cdot\sigma_0(\theta) \leq \frac{C}{\theta^{1+\gamma}}, \hspace*{5mm}\gamma \in (0,2), \hspace*{5mm}\forall \theta \in \Big(0,\frac{\pi}{2}\Big].
\end{equation}
Notice that we do not assume any ``cut-off'' hypothesis on the angular function \cite{MR0156656}; we do not assume  that $\sigma_0\in L^1_{loc}(\mathbb{S}^2)$.  We further assume the collision kernel satisfies the following hard interactions assumption:
\begin{equation}
\label{hard}
 \Phi(g) =  C_{\Phi} g^{\singA},
 \quad - \gamma\leq {\singA} <2, \quad C_{\Phi}>0.
\end{equation}
Alternatively, we make the following soft interactions assumption
\begin{equation}
\label{soft}
 \Phi(g) =  C_{\Phi} g^{-\singB},
  \quad  \gamma < \singB < 2, \quad C_{\Phi}>0.
\end{equation}
These are the assumptions on the kernel that we will use throughout this paper. 
It will be convenient to introduce the following notation
\begin{equation}\label{rho.def}
\singS\eqdef    \begin{cases}
\singA&\text{under the hard interaction assumption \eqref{hard}}\\
-\singB& \text{under the soft interaction assumption \eqref{soft}}.
    \end{cases}
\end{equation}
These assumptions on the collision kernel have been motivated from important physical interactions. Conditions on our collision kernel are generic  in the sense of Dudy\'nski and Ekiel-Je$\dot{\text{z}}$ewska \cite{D-E3}; see also \cite{Dudynski2} for further physical discussions.  

Unfortunately, to the best of our knowledge, the relativistic Boltzmann equation has not been studied without the ``cut-off'' hypothesis. In this paper we study the relativistic Boltzmann equation without assuming the Grad's angular cut-off hypothesis with the goal of obtaining a better understanding of relativistic gases.

\subsection{Linearization and reformulation of the Boltzmann equation}

We will consider the linearization of the collision operator and the perturbation around the relativistic Maxwellian \eqref{Juttner.equi} equilibrium state
\begin{equation}
\label{pert}
F(t,x,p)=J(p)+\sqrt{J(p)}f(t,x,p).
\end{equation}
We linearize the relativistic Boltzmann equation around the relativistic Maxwellian equilibrium state (\ref{pert}). By expanding the equation \eqref{RBE.2}, we obtain that
\begin{equation}
\label{Linearized B}
\partial_t f+\hat{p}\cdot\nabla_x f+L(f)=\Gamma(f,f), \hspace{10mm} f(0,x,v)=f_0(x,v),
\end{equation}
where the linearized relativistic Boltzmann operator $L$ is given by
\begin{multline*}
L(f)\eqdef -J^{-1/2}Q(J,\sqrt{J}f)-J^{-1/2}Q(\sqrt{J}f,J)\\
= \int_{\rth}dq \int_{\mathbb{S}^2} d\omega\  v_{\text{\o}} \sigma(g,\omega)\Big(f(q)\sqrt{J(p)} 
 +f(p)\sqrt{J(q)}
  -f(q')\sqrt{J(p')}
 \\
-f(p')\sqrt{J(q')}\Big)\sqrt{J(q)},
\end{multline*}
and the bilinear operator $\Gamma$ is given by
\begin{equation}
\label{Gamma1}
\begin{split}
\Gamma(f,h)&\eqdef J^{-1/2}Q(\sqrt{J}f,\sqrt{J}h)\\
&=\int_{\rth}dq \int_{\mathbb{S}^2} d\omega\  v_{\text{\o}} \sigma(g,\theta)\sqrt{J(q)}(f(q')h(p')-f(q)h(p)).
\end{split}
\end{equation}
Then notice that we have
 $$
L(f)=-\Gamma(f,\sqrt{J})-\Gamma(\sqrt{J},f).
$$
We define the weight function $\tilde{\zeta}$ such that
 \begin{multline}
 \label{25}
 \Gamma(\sqrt{J},f)=\left(\int_{\rth}dq\int_{\mathbb{S}^2}d\omega\  v_{\text{\o}} \sigma(g,\theta)(f(p')-f(p))\sqrt{J(q')}\sqrt{J(q)}\right)\\-\tilde{\zeta}(p)f(p),
 \end{multline}
where then 
\begin{equation}\label{tildezeta}
	\tilde{\zeta}(p) \eqdef
	\int_{\rth}dq\int_{\mathbb{S}^2}d\omega\  v_{\text{\o}} \sigma(g,\theta)(\sqrt{J(q)}-\sqrt{J(q')})\sqrt{J(q)}.
\end{equation} 
We now call $\tilde{\zeta}(p)$ the \textit{frequency multiplier} of the linearized Boltzmann collision operator.

As we introduced in Section \ref{sec:motivation}, it is crucial to obtain the sharp asymptotic behavior of $\tilde{\zeta}(p)$ for the proof of the coercivity estimates of the linearized relativistic Boltzmann operator without angular cutoff, which will be used crucially for the proof of the global well-posedness of the relativistic Boltzmann equation without angular cutoff nearby the Maxwellian equilibrium \eqref{Juttner.equi}. This global well-posedness is studied in our paper  \cite{RelBolNoncut2020}.

\subsection{Main theorem}
The main goal in this paper is to prove the following sharp leading order asymptotics on the \textit{frequency multiplier} \eqref{tildezeta}. 

\begin{theorem}\label{main.thm}Suppose that the angular function $\sigma_0$ satisfies the \textit{non-cutoff} assumption  \eqref{angassumption} under either \eqref{hard} or \eqref{soft}. Then the \textit{frequency multiplier} $\tilde{\zeta}(p)$ from \eqref{tildezeta} can be split into the sum of two frequency multiplier functions as 
$$\tilde{\zeta}=\zeta+\zeta_{\mathcal{K}},$$ 
which are defined in \eqref{def.zeta} and \eqref{zetaK.def} respectively. For both hard \eqref{hard} and soft \eqref{soft} interactions, using the notation \eqref{rho.def}, these multiplier functions satisfy the following asymptotics: 
\begin{equation}
\label{Paos}
\begin{split}
&|\zeta_\mathcal{K}(p)|\leq C_\varepsilon {(p^0)}^{\frac{\singS}{2}+\varepsilon}
\hspace{5mm}\text{and}\hspace{5mm}
\zeta(p)\approx {(p^0)}^{\frac{{\singS+\gamma}}{2}}.
\end{split}
\end{equation}
Above, for any small $\varepsilon>0$ there exists a finite constant $C_\epsilon>0$ as above.
\end{theorem}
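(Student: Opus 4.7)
The plan is to first exploit the conservation identity from \eqref{conservation} to eliminate $\sqrt{J(q')}$, then to pass to axial coordinates for $\omega$ that isolate the quadratic-in-$\theta$ cancellation built into $\sqrt{J(q)}-\sqrt{J(q')}$, and finally to track the sharp growth in $p^0$ using the key scaling $g\approx\sqrt{p^0}$ that holds on the bulk of the support of $J(q)$ when $|p|$ is large. The identity $J(p)J(q)=J(p')J(q')$ gives $\sqrt{J(q')}=\sqrt{J(q)}\,e^{(p'^0-p^0)/2}$, turning \eqref{tildezeta} into
$$\tilde\zeta(p)=\int_{\rth}dq\int_{\mathbb{S}^2}d\omega\ v_{\text{\o}}\,\sigma(g,\theta)\,J(q)\bigl(1-e^{(p'^0-p^0)/2}\bigr).$$
By Remark \ref{angle.remark} the weight $1-e^{(p'^0-p^0)/2}$ vanishes exactly where $\cos\theta=1$, providing the cancellation needed against $\sigma_0(\theta)\sim\theta^{-2-\gamma}$.

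\textbf{Step 2: decomposition into $\zeta+\zeta_\mathcal{K}$.} Using \eqref{convention} split $q\in\rth$ into the regions $\qlep$ and $\qgep$; on the far region $J(q)\lesssim e^{-c|p|^{1/m}}$ dominates any polynomial in $p^0$, so this contribution can be absorbed into $\zeta_\mathcal{K}$ for $m$ large. On the near region, parametrize $\omega$ by axial coordinates $(\theta,\phi)$ about the pre-collisional direction $\omega_\ast$ so that the singularity sits at $\theta=0$, and Taylor expand $1-e^{(p'^0-p^0)/2}$. Averaging over $\phi\in[0,2\pi]$ annihilates the $\phi$-odd component of $\omega-\omega_\ast$, making both $\langle p'^0-p^0\rangle_\phi$ and $\langle (p'^0-p^0)^2\rangle_\phi$ of order $g^2\theta^2$ (with the quadratic term dominating for large $g$). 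This $O(\theta^2)$ cancellation tames $\sin\theta\,\sigma_0(\theta)\sim\theta^{-1-\gamma}$ uniformly. We identify $\zeta(p)$ as the principal (quadratic, $\phi$-averaged) contribution on $\qlep$, with $\zeta_\mathcal{K}(p)$ collecting the far-region remainder, the $\phi$-odd leftover, and the higher-order Taylor pieces.

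\textbf{Step 3: sharp asymptotics.} On $\qlep$ with $|p|$ large and $|q|$ bounded, $g^2=2(p^0q^0-p\cdot q-1)\approx 2p^0(q^0-\hat p\cdot q)$, so $g\approx\sqrt{p^0}$ generically on the support of $J(q)$, giving $v_{\text{\o}}=g\sqrt{s}/(p^0q^0)\approx 1/q^0$ and $\Phi(g)\approx(p^0)^{\singS/2}$. Splitting the $\theta$-integral at the transition scale $\theta_\ast\sim 1/g$ where $g\theta\sim 1$, the polar integration yields
$$\int_0^{\pi/2}\sin\theta\,\sigma_0(\theta)\bigl\langle 1-e^{(p'^0-p^0)/2}\bigr\rangle_\phi\,d\theta\approx g^\gamma,$$
since the integrand is $\approx g^2\theta^{1-\gamma}$ on $[0,\theta_\ast]$ and $\approx\theta^{-1-\gamma}$ on $[\theta_\ast,\pi/2]$, each interval contributing $g^\gamma$. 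Multiplying by $\Phi(g)\,v_{\text{\o}}\approx(p^0)^{\singS/2}/q^0$ and integrating against $J(q)\,dq$ produces the upper bound $\zeta(p)\lesssim(p^0)^{(\singS+\gamma)/2}$; the matching lower bound comes from restricting $q$ to a compact set on which $q^0-\hat p\cdot q$ is bounded below and using the sharpness of \eqref{g.ineq.sharp}. Applied to $\zeta_\mathcal{K}$, the same scheme produces one less power of $g$ (each remainder piece carries an extra $\theta$ or $g^{-1}$), yielding $|\zeta_\mathcal{K}(p)|\lesssim(p^0)^{\singS/2+\varepsilon}$ once borderline exponents are absorbed into $\varepsilon$.

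\textbf{Main obstacle.} The central difficulty is the unavailability of the Newtonian change of variables $p'\to p$ (see Remark \ref{cancel.problem}), which underlies Pao's argument in the non-relativistic case. Working directly with the axial $\phi$-decomposition bypasses this, but the most delicate step is then the sharp matching lower bound: verifying that the $\phi$-averaged quadratic piece of $(p'^0-p^0)^2$ does not vanish accidentally and in fact produces a positive constant of the correct size, which requires an explicit small-$\theta$ expansion of $p'^0-p^0$ in the $\omega-\omega_\ast$ directions paired with the sharp form of \eqref{g.ineq.sharp}.
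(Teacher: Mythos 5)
Your Step 1 is a correct and clean reduction, and your overall route (work in the original $(q,\omega)$ variables, use axial coordinates about the zero-scattering direction, and extract the $O(\theta^2)$ cancellation from the $\phi$-average) is genuinely different from the paper, which instead lifts \eqref{tildezeta} to $\mathbb{R}^4\times\mathbb{R}^4$, passes to the center-of-momentum frame, and obtains Bessel-function representations \eqref{zeta0By} and \eqref{eq:tildezetanew1}. However, there are two genuine gaps. First, for the upper bound: Taylor expanding $1-e^{(p'^0-p^0)/2}$ and controlling only $\langle p'^0-p^0\rangle_\phi$ and $\langle(p'^0-p^0)^2\rangle_\phi$ is not enough, because the exponential contributes all higher $\phi$-moments; their sum is exactly the Bessel factor $I_0$ in the paper's formulas, and near the angular singularity the combination $e^{-q^0}\exp\bigl(2l-2l\sqrt{y^2+1}+2jy\cos\phi\bigr)$ has no uniform decay in $q^0$ (this is the "severe difficulty" the paper flags after \eqref{def.zeta.mod}). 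Controlling this tail requires the exact integral identities of Lemma \ref{lem:integral.ests} (the $\exp(-\sqrt{l^2-j^2})$ decay) together with the restriction $|q|\le\frac12|p|^{1/m}$; your assertion that each remainder piece "carries an extra $\theta$ or $g^{-1}$" is precisely the step that fails without these ingredients, and the crude bound $|p'^0-p^0|\lesssim g\theta\,p^0$ loses a full factor of $p^0$.

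Second, and more fundamentally, your lower bound does not go through as described. The $\phi$-averaged quadratic coefficient is not a perfect square: expanding $\bigl\langle 1-e^{x}\bigr\rangle_\phi$ with $x=(p'^0-p^0)/2$, the first moment contributes $+\tfrac{p^0-q^0}{2}\sin^2\tfrac{\theta}{2}$ but the second moment enters with a \emph{negative} sign, $-\tfrac18\langle(p'^0-p^0)^2\rangle_\phi$, whose term $\tfrac{|p\times q|^2}{2s}\sin^2\theta$ can dominate the positive part for $|q|$ of moderate size; equivalently, the $\phi$-averaged integrand $1-\exp\bigl(l-l\sqrt{y^2+1}\bigr)I_0(jy)$ is negative on a whole neighborhood of the singularity (the paper's discussion around \eqref{y.min.difference}--\eqref{y.zero.change}). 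So "the principal quadratic $\phi$-averaged contribution" is not manifestly positive and cannot simply be declared to be $\zeta$. The paper's resolution is a different idea entirely: it averages \emph{two distinct} integral representations of $\tilde{\zeta}$ so that the integrand becomes the perfect square $\bigl[\exp(l-l\sqrt{y^2+1}+jy\cos\phi)-1\bigr]^2$ in \eqref{def.zeta}, which is the source of both positivity and the sharp lower bound in Proposition \ref{prop.coercive}. Note also that the theorem asserts the asymptotics for the specific functions \eqref{def.zeta} and \eqref{zetaK.def}, so even a successful version of your decomposition would prove a variant of the statement rather than the statement itself.
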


The proof of our main Theorem \ref{main.thm} follows from the decomposition into \eqref{def.zeta} and \eqref{zetaK.def} of $\tilde{\zeta}(p)$ in \eqref{tildezeta} and the estimates in Proposition's \ref{prop.coercive}, \ref{prop.zeta0.asymptotic}, \ref{prop.zetaL.asymptotic}, and \ref{prop.zetaL.asymptoticnew}.  Our strategy of proof will be explained fully in Section's \ref{sec:main.decomp} and \ref{sec:proof.outline}.

\begin{remark}\label{leading.order.remark}
Throughout the paper, based upon \eqref{Paos}, we call a term $A(p)$ a \textit{leading order} term, if $A(p)\approx (p^0)^{\frac{\singS+\gamma}{2}}.$  In addition, we call a term $B(p)$ a \textit{lower order} term if  for some positive constant $\epsilon>0$ there exists a finite constant $C_\epsilon>0$ such that $|B(p)|\leq C_\epsilon (p^0)^{\frac{\singS+\gamma}{2}-\epsilon}$. 
\end{remark}

\subsection{Main difficulties in the relativistic case}\label{sec:main.difficult}
In this subsection, we sketch the main difficulty that we experience in the relativistic case versus in the non-relativistic case.  In contrast to \cite{Pao}, one can prove the asymptotic behavior of the non-relativistic frequency multiplier in the following simple way.

In the non-relativisitic case \eqref{newtonian.BE} the analog of the collision frequency multiplier \eqref{tildezeta} is given  \cite[Page 11]{MR2784329} by
\begin{equation}\label{newtonian.BE.nu}
\tilde{\nu}(v)=\int_{\rth}dv_*\int_{\mathbb{S}^2}d\omega\ B(v-v_*,\omega)
\left(\sqrt{\mu(v_*)}-\sqrt{\mu(v'_*)}\right)\sqrt{\mu(v_*)},
\end{equation}
where the Newtonian Maxwellian equilibrium is given by $$\mu(v)\eqdef (2\pi)^{-3/2} \exp\left(-|v|^2/2\right).$$  Note the similarity to  $\tilde{\zeta}$ in \eqref{tildezeta}.  In the non-relativistic case, due to symmetry the following decomposition of the frequency multiplier is very useful:
$$
\left(\sqrt{\mu(v_*)}-\sqrt{\mu(v'_*)}\right)\sqrt{\mu(v_*)}
=    
\frac{1}{2}\left(\sqrt{\mu(v_*)}-\sqrt{\mu(v'_*)}\right)^2 
+ \frac{1}{2} \left(\mu(v_*)-\mu(v'_*)\right).
$$ 
This decomposition allows the splitting 
$$
\tilde{\nu}(v) = \nu(v) + \nu_{\mathcal{K}}(v),
$$
where 
\begin{equation}\notag
\nu(v)\eqdef
\frac{1}{2}
\int_{\rth}dv_*\int_{\mathbb{S}^2}d\omega\  B(v-v_*,\omega)
\left(\sqrt{\mu(v_*)}-\sqrt{\mu(v'_*)}\right)^2.
\end{equation}
Now $\nu(v)$ is clearly non-negative and it can be quickly shown that $\nu(v)$ has the expected leading order asymptotic behavior as $|v|\to\infty$.  

On the other hand from \eqref{newtonian.BE.nu} we have
\begin{equation}\notag
\nu_{\mathcal{K}}(v) = 
\frac{1}{2}
\int_{\rth}dv_*\int_{\mathbb{S}^2}d\omega\  B(v-v_*,\omega)
\left(\mu(v_*)-\mu(v'_*)\right).
\end{equation}
Now one can  use the Newtonian cancellation lemma \cite{ADVW}, the change of variable from $v_*' \to v_*$, to show for some $C'>0$ that
\begin{equation}\notag
\nu_{\mathcal{K}}(v) = 
C'
\int_{\rth}dv_*\ \mu(v_*) \Psi(|v-v_*| ).
\end{equation}
This expression directly implies that $\nu_{\mathcal{K}}(v)$ has lower order asymptotic behavior as $|v|\to\infty$.  This decomposition is crucial to designing a norm that captures the sharp behavior of the linearized collision operator and to further prove the global in time existence of solutions nearby equilibrium.

Unfortunately in the relativistic case this approach fails as we now explain.  We recall that the main difference in the integrand of $\tilde{\zeta}$ in \eqref{tildezeta} is $$ \sqrt{J(q)}(\sqrt{J(q)}-\sqrt{J(q')}).$$
The analogous splitting in the relativistic case is
\begin{equation}\label{rel.difference}
    (\sqrt{J(q)}-\sqrt{J(q')})  \sqrt{J(q)}
=    
\frac{1}{2}\left(\sqrt{J(q)}-\sqrt{J(q')}\right)^2 
+ \frac{1}{2} \left(J(q)-J(q')\right).
\end{equation}
However, this decomposition does not help in the relativistic case and it is also closely related to the fact that the crucial change of variables $q'\to q$ in the cancellation lemma \cite{ADVW} is problematic in the relativistic case \cite{ChapmanJangStrain2020} even in the case with an angular cutoff.  We now provide the sketch of the argument.

We ignore the positive square term, $\left(\sqrt{J(q)}-\sqrt{J(q')}\right)^2$, and focus on the second term on the right side in \eqref{rel.difference}. We can write this term from \eqref{tildezeta} as
\begin{equation}\notag
	\tilde{\zeta}^B(p) \eqdef
	\frac{1}{2}\int_{\rth}dq\int_{\mathbb{S}^2}d\omega\  v_{\text{\o}} \sigma(g,\theta)(J(q)-J(q')) = \tilde{\zeta}^B_1(p) - \tilde{\zeta}^B_2(p).
\end{equation} 
We can further assume that the angular kernel $\sigma_0$ from \eqref{kernel.product} is just pointwise bounded (we do not need to assume that it is mean-zero) with an angular cutoff.  Then the term on the right side of \eqref{rel.difference} containing $J(q)$ in \eqref{tildezeta} corresponds to \eqref{Ilossrepresentation} in Appendix \ref{sec:alternative.deriv}.  Then \eqref{Ilossrepresentation} is transformed into \eqref{final loss} so that
\begin{multline}\notag
\tilde{\zeta}^B_1(p)=\frac{c'}{p^0}e^{\frac{p^0}{2}}\int_{\rth}\frac{dq}{q^0}\frac{e^{-\frac{1}{2}q^0}}{g} \int_{0}^{\infty} \frac{rdr}{\sqrt{r^2+s}}s_\Lambda\sigma(g_\Lambda,\theta_\Lambda)\\\times \exp\left(-\frac{p^0+q^0}{2\sqrt{s}}\sqrt{r^2+s}\right)I_0\left(\frac{|p\times q|}{g\sqrt{s}}r\right).
\end{multline}
Above $c'>0$, and we further use the notations \eqref{g2y.variable} below with $r=y\sqrt{s}$ in addition to the Bessel function \eqref{bessel0}.  We point out that the above is a finite integral since it contains exponential decay in both the $q$ and the $r$ variables in \eqref{final loss}.  However, if we look at the new loss term with $J(q')$ only, then if we follow the same derivation, then we obtain \eqref{final loss} without the exponential term and without the bessel function $I_0$.  Indeed following the transformation procedure in Appendix \ref{sec:alternative.deriv} we obtain
\begin{equation}\notag
\tilde{\zeta}^B_2(p)=\frac{c'}{p^0}\int_{\rth}\frac{dq}{q^0}\frac{e^{-q^0}}{g} \int_{0}^{\infty} \frac{rdr}{\sqrt{r^2+s}}s_\Lambda\sigma(g_\Lambda,\theta_\Lambda).
\end{equation}
Thus we see following this procedure that the $dr$ integration becomes infinite in $\tilde{\zeta}^B_2(p)$, since this term no longer contains sufficient decay in the $r$ variable.  Therefore, the whole integral becomes infinity unless we artificially assume that the kernel $\sigma$ decays very rapidly for large $r$.  (We mention that this can be directly justified using standard approximation procedures.)

\begin{remark}\label{cancel.problem}
This illustrates that the crucial change of variable in cancellation lemma, $q' \to q$ (or $(q', \omega) \to (q, k)$ for some $|k|=1$) as stated in \cite{ADVW} for the Newtonian Boltzmann equation, is not well defined in the special relativistic case.  This statement is further independent of our choice of coordinate representations of $(p', q')$, such as for example \eqref{p'}.\footnote{We refer to \cite{MR2765751} for a discussion of a variety of coordinate representations of the relativistic Boltzmann collision operator.}
\end{remark}

If the cancellation lemma were true then the integrand in $\tilde{\zeta}^B_2(p)$ would be integrable and $\tilde{\zeta}^B_2(p)$ would be finite.  Indeed, the expression $\tilde{\zeta}^B_2(p)$ integrates to infinity by this argument.  The factor $J(q')$ does not provide sufficient decay to control the integral. That is why such a decomposition, which was very effective in the Newtonian case, does not help in the relativistic case. It is adding and subtracting a term which is infinity.

\subsection{Main decomposition}\label{sec:main.decomp}
Instead we perform in Appendix \ref{sec:derivation} the following transformation of \eqref{tildezeta} as $\tilde{\zeta}=\zeta_0+\zetaL$  where for $c'>0$ we have
\begin{multline}\label{zeta0By}
\zeta_0 \eqdef \frac{c'}{p^0}\int_{\rth}\frac{dq}{q^0}\frac{e^{-q^0}\sqrt{s}}{g}\int_{0}^\infty \frac{ydy}{\sqrt{y^2+1}}s_\Lambda\sigma(g_\Lambda,\theta_\Lambda) \frac{s\Phi(g)g^4}{s_\Lambda  \Phi(g_\Lambda)g^4_\Lambda}
\\
\times
\Big[1 - \exp\left(l(1-\sqrt{y^2+1})\right)I_0\left(jy\right) \Big],
\end{multline}and
\begin{multline}\label{zetaLBy}
\zetaL \eqdef \frac{c'}{p^0}\int_{\rth}\frac{dq}{q^0}\frac{e^{-q^0}\sqrt{s}}{g}\int_{0}^\infty \frac{ydy}{\sqrt{y^2+1}}s_\Lambda\sigma(g_\Lambda,\theta_\Lambda)   \\
\times
\exp\left(l(1-\sqrt{y^2+1})\right) I_0\left(jy\right)
\left( \frac{s\Phi(g)g^4}{s_\Lambda  \Phi(g_\Lambda)g^4_\Lambda} -1\right).
\end{multline}
We recall the modified Bessel function from \eqref{bessel0}.
These expressions arise by applying the change of variables $r\mapsto y=\frac{r}{\sqrt{s}}$, to the expressions \eqref{zeta0B.appendix} and \eqref{zetaLB}.  Above we use the notations $l$ and $j$ that are defined as
\begin{equation}\label{lj}
l=l(p,q)\eqdef \frac{p^0+q^0}{4},\ \text{and}\ j=j(p,q)\eqdef \frac{|p\times q|}{2g}.
\end{equation}
We also further define the notations
\begin{equation}\label{g2y.variable}
    g^2_\Lambda = g^2+\frac{s}{2}(\sqrt{y^2+1}-1), \quad s_\Lambda = g^2_\Lambda + 4,
\end{equation}
and from \eqref{newcos} we have 
\begin{equation}\notag 
\cos\theta_\Lambda=\frac{2g^2}{g^2_\Lambda}-1
=
\frac{g^2-\frac{s}{2}(\sqrt{y^2+1}-1)}{g^2+\frac{s}{2}(\sqrt{y^2+1}-1)}.
\end{equation}
It is shown in Section \ref{sec:low order zeta1} that $\zetaL(p)$ in \eqref{zetaLBy} has lower order asymptotic behavior, and in Section \ref{sec:fullsharpupper zeta0} we see that the main part of $\zeta_0(p)$ in \eqref{zeta0By} has a leading order upper bound.

We remark that the dynamics of each decomposed piece of $\tilde{\zeta}$ in \eqref{zeta0By} and \eqref{zetaLBy}  are essentially depending on the integral domain for the $q$ and $y$ variables, and the Bessel function $I_0(jy)$ from \eqref{bessel0}, which makes them extremely complex.  It turns out that the major difficulty involves the difference inside the inside the integrand in \eqref{zeta0By}:
\begin{equation}\notag
    \Big[1 - \exp\left(l(1-\sqrt{y^2+1})+jy \cos\phi\right)\Big].
\end{equation}
This expression is zero at $y=0$ and converges to one as $y\to\infty$.  However this expression also 
has it's negative minimum at 
\begin{equation}\label{y.min.difference}
y_{m} = \frac{j|\cos\phi|}{\sqrt{l^2 - j^2 \cos^2 \phi}},
\end{equation}
and the difference above remains negative, for say $\cos\phi>0$, until
\begin{equation}\label{y.zero.change}
y_{s} = \frac{2l j \cos\phi}{l^2 - j^2 \cos^2 \phi},
\end{equation}
where $0 \le y_{m} \le y_{s} \to 0$ as $l \to \infty$.  Thus the whole integral in \eqref{zeta0By} is negative in a large region nearby the minimum of the difference and nearby the singularity of the kernel $\sigma(g_\Lambda, \theta_\Lambda)$ at $y=0$.     This region where the integrand is negative and close to it's minimum makes it extremely problematic to prove the required leading order asymptotic positive lower bound.  Therefore, it is unclear from this point of view whether or not \eqref{tildezeta} or  \eqref{zeta0By} is positive or a leading order term.  It is essential to have a positive leading order term for the collision frequency multiplier in order to obtain the sharp behavior of the linearized collision operator.

For this reason we needed to derive another representation of $\tilde{\zeta}$ from \eqref{tildezeta}.   As in \eqref{eq:tildezetanew1.appendix}, we can alternatively write $\tilde{\zeta}$ as
\begin{multline}\notag
\tilde{\zeta}(p)=\frac{c'}{\pi p^0}\int_{\rth}\frac{dq}{q^0}\frac{e^{-q^0}\sqrt{s}}{g}\int_{0}^\infty \frac{ydy}{\sqrt{y^2+1}}s_\Lambda\sigma(g_\Lambda,\theta_\Lambda)\int_0^{\pi} d\phi\\\times\Big[\exp (2l-2l\sqrt{y^2+1}+2jy\cos\phi)-\exp(l-l\sqrt{y^2+1} +jy\cos\phi)\Big],
\end{multline}
where we use the notations \eqref{lj} and \eqref{g2y.variable} and $c'>0$.
Then $\tilde{\zeta}$ can be decomposed  into two terms $\tilde{\zeta}=\tilde{\zeta}_0+\tilde{\zeta}_L$ as 
\begin{multline}\label{eq:tildezetanew1}
\tilde{\zeta}_0(p)=\frac{c'}{\pi p^0}\int_{\rth}\frac{dq}{q^0}\frac{e^{-q^0}\sqrt{s}}{g}\int_{0}^\infty \frac{ydy}{\sqrt{y^2+1}}s_\Lambda\sigma(g_\Lambda,\theta_\Lambda)\int_0^{\pi} d\phi\frac{s\Phi(g)g^4}{s_\Lambda  \Phi(g_\Lambda)g^4_\Lambda}  \\\times\Big[\exp (2l-2l\sqrt{y^2+1}+2jy\cos\phi)-\exp(l-l\sqrt{y^2+1} +jy\cos\phi)\Big],
\end{multline}
and
\begin{multline}\label{eq:tildezetanew1L}
\tilde{\zeta}_L(p)
=\frac{c'}{ p^0}\int_{\rth}\frac{dq}{q^0}\frac{e^{-q^0}\sqrt{s}}{g}\int_{0}^\infty \frac{ydy}{\sqrt{y^2+1}}s_\Lambda\sigma(g_\Lambda,\theta_\Lambda) \left(1-\frac{s\Phi(g)g^4}{s_\Lambda  \Phi(g_\Lambda)g^4_\Lambda}  \right) \\\times\Big[\exp (2l-2l\sqrt{y^2+1})I_0(2jy)-\exp(l-l\sqrt{y^2+1})I_0(jy)\Big].
\end{multline}
The expression \eqref{eq:tildezetanew1} looks like a better candidate for the leading order term because the difference 
$$
\Big[\exp (2l-2l\sqrt{y^2+1}+2jy\cos\phi)-\exp(l-l\sqrt{y^2+1} +jy\cos\phi)\Big]
$$
now has it's maximum, $y_m$, at \eqref{y.min.difference} and it is positive on $0 \le y \le y_s$ from \eqref{y.zero.change}.  However this difference is negative for $y \ge y_s$ and still remains large and negative in a significantly sized region after $y$ passes $y_s$ which causes further extreme difficulties in proving a leading order positive lower bound.

Instead the key point will be that when we add $\zeta_0$ and $\tilde{\zeta}_0$ then the resulting term is clearly positive and leading order.  And we will later show that $\tilde{\zeta}_L$ from \eqref{eq:tildezetanew1L} and $\zeta_L$ from \eqref{zetaLBy} are lower order terms.  In particular we can take the summation of $\frac{1}{2}\zetaZ(p)1_{|p|\ge 1}$ from \eqref{zeta0By} and $\frac{1}{2}\zetaTZ(p)1_{|p|\ge 1}$ from \eqref{eq:tildezetanew1}. By adding an additional term $\langle p \rangle^{(\rho+\gamma)/2} 1_{|p|\le 1}$,
we obtain that
\begin{multline}\label{def.zeta.mod}
\zeta'(p) \eqdef \left[\frac{\zetaZ(p)+\zetaTZ(p)}{2}\right]1_{|p|\ge 1}
+\langle p \rangle^{(\rho+\gamma)/2} 1_{|p|\le 1}\\
=
\frac{c'1_{|p|\ge 1}}{2\pi p^0}\int_{\rth}\frac{dq}{q^0}\frac{e^{-q^0}\sqrt{s}}{g}\int_{0}^\infty \frac{ydy}{\sqrt{y^2+1}}s_\Lambda\sigma(g_\Lambda,\theta_\Lambda)\frac{s\Phi(g)g^4}{s_\Lambda  \Phi(g_\Lambda)g^4_\Lambda}\int_0^{\pi} d\phi\\\times\Big[\exp (2l-2l\sqrt{y^2+1}+2jy\cos\phi)-2\exp(l-l\sqrt{y^2+1} +jy\cos\phi)+1\Big]\\+\langle p \rangle^{(\rho+\gamma)/2} 1_{|p|\le 1}\\
=
\frac{c'1_{|p|\ge 1}}{2\pi p^0}\int_{\rth}\frac{dq}{q^0}\frac{e^{-q^0}\sqrt{s}}{g}\int_{0}^\infty \frac{ydy}{\sqrt{y^2+1}}s_\Lambda\sigma(g_\Lambda,\theta_\Lambda)\frac{s\Phi(g)g^4}{s_\Lambda  \Phi(g_\Lambda)g^4_\Lambda}
\\
\times \int_0^{\pi} d\phi \Big[\exp(l-l\sqrt{y^2+1} +jy\cos\phi)-1\Big]^2+\langle p \rangle^{(\rho+\gamma)/2} 1_{|p|\le 1}.
\end{multline}
Note that $\zeta'(p)$ is then automatically positive for all $p$.  The term $\langle p \rangle^{(\rho+\gamma)/2} 1_{|p|\le 1}$ guarantees that $\zeta(p)>0$ near $p=0$ (if necessary).

However, unfortunately it turns out that there is an additional severe difficulty to obtain the sharp pointwise asymptotic upper bound of the expression \eqref{def.zeta.mod} for $\zeta(p)$.  The problem is that the following term 
\begin{equation}\notag
    \exp({-q^0})\exp (2l-2l\sqrt{y^2+1}+2jy\cos\phi)
\end{equation}
does not in general have uniform decay in the $q^0$ variable when we are close to the singularity in the $dy$ integral at $y=0$.  Here we recall the definitions \eqref{lj}.  Therefore while the expression $\zeta'(p)$ in \eqref{def.zeta.mod} appears good for obtaining a positive asymptotic lower bound, it is extremely difficult to obtain the required sharp upper bound.  There is the same difficulty for $\zetaTL(p)$ in \eqref{eq:tildezetanew1L}.  To overcome this situation we split the $dq$ integral into the two regions $\qgep$ and $\qlep$.  

\begin{remark}
 The derivations of the alternative formula's in \eqref{zeta0By}, \eqref{zetaLBy}, \eqref{eq:tildezetanew1} and \eqref{eq:tildezetanew1L} of \eqref{tildezeta} in Appendix \ref{sec:derivation} still hold under the restrictions to the region such as $\left[\tilde{\zeta}\right]_{|q|\ge \frac{1}{2}|p|^{1/m}}$ and $\left[\tilde{\zeta}\right]_{|q|\le \frac{1}{2}|p|^{1/m}}$ using the convention \eqref{convention}.  This is straightforward from the proofs in  Appendix \ref{sec:derivation}.
 \end{remark}

With these splittings, we are then able to obtain the sharp asymptotic upper bound estimates of $\zeta(p)$ and $\zetaTL(p)$ on the region $\qlep$.   And on $\qgep$ from \eqref{tildezeta} using \eqref{convention} we further define
 \begin{equation}\label{def.zetatilde1}
     \zetaTone(p) \eqdef \left[\tilde{\zeta}(p)\right]_{|q|\ge \frac{1}{2}|p|^{1/m}}1_{|p|\ge 1}
 \end{equation}
We will prove that $\zetaTone(p)$ has low order asymptotic behavior in Proposition \ref{prop.tildezeta.upper.qgep}.  Then these estimates together will be enough to prove our main theorem.

To this end, from \eqref{zeta0By} and \eqref{zetaLBy}, we also define the following two terms
\begin{equation}\label{def.zetaMdef}
    \zeta_{0,m}(p) \eqdef [\zeta_0(p)]_{|q| \le \frac{1}{2} |p|^{1/m}} 1_{|p|\ge 1},
    \quad 
   \zeta_{L,m}(p) \eqdef [\zetaL(p)]_{|q| \le \frac{1}{2} |p|^{1/m}} 1_{|p|\ge 1},   
\end{equation}
In \eqref{def.zetaMdef} above we use the notation convention from \eqref{convention}.  For example
\begin{multline}\label{def.convention.zeta0By}
\zeta_{0,m}(p) = \frac{c'}{p^0}\int_{|q| \le \frac{1}{2} |p|^{1/m}}\frac{dq}{q^0}\frac{e^{-q^0}\sqrt{s}}{g}\int_{0}^\infty \frac{ydy}{\sqrt{y^2+1}}s_\Lambda\sigma(g_\Lambda,\theta_\Lambda) \frac{s\Phi(g)g^4}{s_\Lambda  \Phi(g_\Lambda)g^4_\Lambda}
\\
\times
\Big[1 - \exp\left(l(1-\sqrt{y^2+1})\right)I_0\left(jy\right) \Big],
\end{multline}
Similarly, from \eqref{eq:tildezetanew1} and \eqref{eq:tildezetanew1L}, we further define 
\begin{equation}\label{def.TzetaMdef}
    \zetaTZm(p) \eqdef [\tilde{\zeta}_0(p)]_{|q| \le \frac{1}{2} |p|^{1/m}} 1_{|p|\ge 1},
    \quad 
   \zetaTLm(p) \eqdef [\tilde{\zeta}_L(p)]_{|q| \le \frac{1}{2} |p|^{1/m}} 1_{|p|\ge 1},   
\end{equation}
With these definitions instead of \eqref{def.zeta.mod} we define the modified frequency multiplier
\begin{multline}\label{def.zeta}
\zeta(p) \eqdef \frac{1}{2} \left[\zeta_{0,m}(p)+\zetaTZm(p) \right]
+\langle p \rangle^{(\rho+\gamma)/2} 1_{|p|\le 1}\\
=
\frac{c'1_{|p|\ge 1}}{2\pi p^0}\int_{\qlep}\frac{dq}{q^0}\frac{e^{-q^0}\sqrt{s}}{g}\int_{0}^\infty \frac{ydy}{\sqrt{y^2+1}}s_\Lambda\sigma(g_\Lambda,\theta_\Lambda)\frac{s\Phi(g)g^4}{s_\Lambda  \Phi(g_\Lambda)g^4_\Lambda}
\\
\times \int_0^{\pi} d\phi \Big[\exp(l-l\sqrt{y^2+1} +jy\cos\phi)-1\Big]^2+\langle p \rangle^{(\rho+\gamma)/2} 1_{|p|\le 1}.
\end{multline}
Again this is automatically positive.

As such, we introduce the construction of a positive leading order term and a lower order term in the frequency multiplier $\tilde{\zeta}(p)$ from \eqref{tildezeta}, which is highly non-trivial.   We suggest the following novel decomposition of $\tilde{\zeta}=\zeta+\zeta_{\mathcal{K}}$ as
$$
\tilde{\zeta}(p)=\zeta(p)+\zeta_{\mathcal{K}}(p)
$$ 
where $\zeta$ is given by \eqref{def.zeta}, using also \eqref{tildezeta}, \eqref{zetaLBy}, \eqref{eq:tildezetanew1L} and \eqref{def.zetatilde1} we have
\begin{equation}\label{zetaK.def}
    \zeta_{\mathcal{K}}\eqdef \tilde{\zeta}(p)1_{|p|\le 1}
    +
     \zetaTone(p)
     +
     \frac{1}{2}\left(\zetaLm+\zetaTLm\right) 
     - \langle p \rangle^{(\rho+\gamma)/2} 1_{|p|\le 1},
\end{equation}
 Then we will show that $\zeta$ and $\zeta_{\mathcal{K}}$ satisfy the asymptotics from \eqref{Paos}.   In particular the main positive term is \eqref{def.zeta}.

\subsection{Outline of the proof of the main theorem}\label{sec:proof.outline}
 Specifically, in the rest of the paper, in order to prove Theorem \ref{main.thm} we will make upper- and lower-bound estimates for  $\zeta$ in \eqref{def.zeta} and will conclude that it is a leading order term. In addition, we will show that $\zeta_{\mathcal{K}}$ in \eqref{zetaK.def} is a lower-order term.

 We will first prove that $\zeta(p)$ from \eqref{def.zeta} has a leading order positive lower bound in Proposition \ref{prop.coercive}.  Then we will prove that $\zeta_0$  from \eqref{zeta0By} has the leading order upper bound in Proposition \ref{prop.zeta0.asymptotic}.  Then in Proposition \ref{prop.zetaL.asymptotic} we prove that $\zetaL(p)$ from \eqref{zetaLBy} has a lower order upper bound.  We further prove in Proposition \ref{prop.zetaL.asymptoticnew} that $\zetaTLm(p)$ from \eqref{def.TzetaMdef} with \eqref{eq:tildezetanew1L} has a lower order upper bound. We then prove in Proposition \ref{prop.tildezeta.upper.qgep} that $\zetaTone(p)$ from \eqref{def.zetatilde1} has a lower order upper bound. Note that both $\tilde{\zeta}(p)1_{|p|\le 1}$ and $\langle p \rangle^{(\rho+\gamma)/2} 1_{|p|\le 1}$ have lower order upper bounds since  $1_{|p|\le 1} $ trivially makes $p^0\lesssim 1.$   All of these estimates combine to prove that $\zeta_{\mathcal{K}}$ has a lower order upper bound, and that $\tilde{\zeta}(p)$ from \eqref{tildezeta} has a leading order asymptotic upper bound.

We remark that we have not estimated the asymptotic upper bound of $\zetaTZ(p)$ from \eqref{eq:tildezetanew1} or more accurately we have not estimated $\zetaTZm(p)$ from \eqref{def.TzetaMdef} and this is not necessary because from the splittings above we have
 \begin{equation}\notag
     \zetaTZm(p) = [\tilde{\zeta}(p)]_{\qlep}1_{|p|\ge 1} - \zetaTLm(p)
    = \zetaZm(p) + \zetaLm(p)- \zetaTLm(p).
 \end{equation}
Therefore using the estimates discussed in the previous paragraph we obtain that $\zetaTZm(p)$ and $\zeta(p)$
 both have the leading order asymptotic upper bound.  All of the estimates discussed in this sub-section together give the proof of Theorem \ref{main.thm}.

\subsection{Alternative representations of $\tilde{\zeta}(p)$}
In this section, we introduce two additional representations of $\tilde{\zeta}(p)$ from \eqref{tildezeta}.  In particular it is shown in Appendix \ref{sec:derivation} that we have the following splitting of $\tilde{\zeta}=\zeta_0+\zetaL$ in \eqref{zeta0B.appendix} with $c'>0$ as
\begin{multline}\label{zeta0B}
\zeta_0 \eqdef \frac{c'}{p^0}e^{\frac{p^0}{4}}\int_{\rth}\frac{dq}{q^0}\frac{e^{-\frac{3}{4}q^0}}{g}\int_{0}^\infty \frac{rdr}{\sqrt{r^2+s}}s_\Lambda\sigma(g_\Lambda,\theta_\Lambda) \frac{s\Phi(g)g^4}{s_\Lambda  \Phi(g_\Lambda)g^4_\Lambda}
\\
\times
\Big[\exp\left(-\frac{p^0+q^0}{4}\right) - \exp\left(-\frac{p^0+q^0}{4\sqrt{s}}\sqrt{r^2+s}\right)I_0\left(\frac{|p\times q|}{2g\sqrt{s}}r\right) \Big],
\end{multline}
and $\zetaL$ is further given in \eqref{zetaLB}.  Here $g_\Lambda$ is given by \eqref{g2}, $s_\Lambda$ by \eqref{slambda.def}, and $\theta_\Lambda$ by \eqref{coslam}.  Note that \eqref{zeta0B} and \eqref{zetaLB} can alternatively be obtained by  applying the change of variables $y \mapsto r = \sqrt{s} y$ to the expressions \eqref{zeta0By} and \eqref{zetaLBy}.   We will use the formula \eqref{zeta0B} in the proof of Proposition \ref{prop.zeta0.asymptotic}.

We can also write \eqref{zeta0B} in a further alternative form with other variables by using the following change of variables \begin{equation}\label{changeofv}r\mapsto k\eqdef \frac{1}{2}\sqrt{s}(\sqrt{r^2+s}-\sqrt{s}).\end{equation} 
Then this gives $$dk=\frac{1}{2}\sqrt{s}\frac{rdr}{\sqrt{r^2+s}}.$$
Also, we have 
$$ 
\frac{\sqrt{r^2+s}}{\sqrt{s}}=1+\frac{2k}{s},
$$ 
and 
$$
r=\frac{2\sqrt{k^2+ks}}{\sqrt{s}}.
$$
Here $g_\Lambda$ from \eqref{g2} and $\theta_\Lambda$ from \eqref{coslam} now take the form
\begin{equation}\label{gl}
g^2_\Lambda=g^2+k,
\end{equation}
and
$$\cos\theta_\Lambda =\frac{g^2-k}{g^2+k}=1-2\frac{k}{g^2+k}.$$
Therefore, we have 
\begin{equation}\label{tl}\frac{\theta_\Lambda}{2}\approx \sin\frac{\theta_\Lambda}{2}=\sqrt{\frac{k}{g^2+k}}.
\end{equation}
With respect to the new variable $k$, then \eqref{zeta0B} can be re-written as
\begin{multline}\notag
\zeta_0 \eqdef \frac{c'}{p^0}e^{\frac{p^0}{4}}\int_{\rth}\frac{dq}{q^0}\frac{e^{-\frac{3}{4}q^0}}{g}\int_{0}^\infty \frac{2dk}{\sqrt{s}}s_\Lambda\sigma(g_\Lambda,\theta_\Lambda)\frac{s\Phi(g)g^4}{s_\Lambda  \Phi(g_\Lambda)g^4_\Lambda}  
\\
\times
\left[\exp\left(-\frac{p^0+q^0}{4}\right) - \exp\left(-\frac{p^0+q^0}{4}\left(1+\frac{2k}{s}\right)\right)I_0\left(\frac{|p\times q|}{gs}\sqrt{k^2+ks}\right)  \right]\\
=\frac{c'}{p^0}\int_{\rth}\frac{dq}{q^0}\frac{e^{-q^0}}{g}\int_{0}^\infty \frac{2dk}{\sqrt{s}}s_\Lambda\sigma(g_\Lambda,\theta_\Lambda)\frac{s\Phi(g)g^4}{s_\Lambda  \Phi(g_\Lambda)g^4_\Lambda} \\
\times
\left[1- \exp\left(-\frac{(p^0+q^0)k}{2s}\right) I_0\left(\frac{|p\times q|}{gs}\sqrt{k^2+ks}\right) \right],
\end{multline}
where $s_\Lambda=g^2_\Lambda+4$ with \eqref{gl}.  This representation of $\zeta_0$ in the $k$ variables above will be used in the proof of Lemma \ref{lemma.zeta1}, which is one part of the leading order upper bound estimate of $\zeta_0$.

\subsection{Some useful pointwise estimates}

In this subsection we will state the following two lemma's which contain a collection of useful estimates that will be used throughout the remainder of this article.   Many of the estimates in Lemma \ref{lem:useful.ests} and Lemma \ref{lem:integral.ests} are from  \cite{GS3}.  We explain the full proofs of these estimates in Appendix \ref{sec:pointwiseProof} for the sake of completeness.

\begin{lemma}\label{lem:useful.ests}
With the notations \eqref{s} and \eqref{g} we have
\begin{equation}\label{s.ge.g2}
    s=g^2+4 \ge \max\{g^2,4\},
\end{equation}
and
\begin{equation}\label{s.le.pq}
    s\le 4p^0 q^0.
\end{equation}
We trivially conclude from \eqref{s.le.pq}-\eqref{s.ge.g2} that 
\begin{equation}\label{g.le.sqrtpq}
    g\lesssim \sqrt{p^0 q^0}
\end{equation}
Recalling \eqref{g.ineq.sharp} we further have
\begin{equation}\label{g.ge.lower}
\frac{|p-q|}{\sqrt{p^0q^0}}\leq g,
\end{equation}
and
\begin{equation}\label{g.ge.2lower}
\frac{|p\times q|}{\sqrt{p^0q^0}}\leq g,
\end{equation}
and
\begin{equation}\label{g.le.upper}
  g   \leq |p-q|.
\end{equation}
We also have
\begin{equation}\label{p0q0.le.pq}
  |p^0 - q^0|   \leq |p-q|,
\end{equation}
and
\begin{equation}\label{p0.plus.q0.le.p0q0}
  p^0 + q^0   \leq 2p^0 q^0.
\end{equation}

We now state a few pointwise estimates for \eqref{lj}.  
We have the inequality:
\begin{equation}\label{j.le.l}
    j\le l.
\end{equation}
Further
\begin{equation}\label{l.upper.ineq}
j^2   \leq \frac{1}{2}p^0 q^0, \quad l   \leq \frac{1}{2}p^0 q^0.
\end{equation}
Also
\begin{equation}\label{l2j2}
	l^2-j^2=\frac{(p^0+q^0)^2g^2-4|p\times q|^2}{16g^2}=\frac{s}{16g^2}|p-q|^2,
\end{equation} 
and
\begin{equation}\label{l2j2size}
	\sqrt{l^2-j^2}=|p-q|\frac{\sqrt{g^2+4}}{4g}\geq \frac{1}{4}|p-q|.
\end{equation}

Next for $g_\Lambda^2$ defined in \eqref{g2y.variable} we have
\begin{equation}\label{ineq.gL.here}
g^2 \max\{\sqrt{y^2+1}, \sqrt{2}\} g^2
\lesssim
 g_\Lambda^2 \lesssim   s_\Lambda \lesssim s \sqrt{y^2+1}, \quad \forall 0 \le y \le \infty.
\end{equation}
\end{lemma}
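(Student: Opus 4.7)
The plan is to verify each inequality directly from the definitions \eqref{s}, \eqref{g}, \eqref{lj}, and \eqref{g2y.variable}; there is no single hard step, just a carefully sequenced list of elementary manipulations. I would organize the proof into four groups: (i) bounds involving $s$ and $g$; (ii) sharp two-sided bounds for $g$ in terms of $|p-q|$ and $|p\times q|$; (iii) bounds for $l$ and $j$ including the key identity \eqref{l2j2}; and (iv) the bounds for $g_\Lambda$.

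For group (i), the bound \eqref{s.ge.g2} is immediate from $s=g^2+4$ with $g\geq 0$. For \eqref{s.le.pq}, I would rewrite $s=2(p^0q^0-p\cdot q+1)$ and bound $-p\cdot q\leq |p||q|$. The key auxiliary inequality is $|p||q|\leq p^0q^0-1$, which follows from $(|p||q|)^2=((p^0)^2-1)((q^0)^2-1)\leq (p^0q^0-1)^2$ (expanding and using $2p^0q^0\leq (p^0)^2+(q^0)^2$). Then \eqref{g.le.sqrtpq} is a consequence of the two. The estimate \eqref{p0q0.le.pq} comes from $(p^0-q^0)(p^0+q^0)=|p|^2-|q|^2$ and $|p|+|q|\leq p^0+q^0$, and \eqref{p0.plus.q0.le.p0q0} is trivial since $p^0,q^0\geq 1$.

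For group (ii), the heart of the matter is the identity
\[
g^2=\frac{2\bigl(|p-q|^2+|p\times q|^2\bigr)}{p^0q^0+p\cdot q+1},
\]
obtained by multiplying $g^2=2(p^0q^0-p\cdot q-1)$ by the conjugate and using $(p^0q^0)^2-(p\cdot q)^2=1+|p|^2+|q|^2+|p\times q|^2$. Combining this with $p^0q^0+p\cdot q+1\leq 2p^0q^0$ (which again uses $|p||q|\leq p^0q^0-1$) yields \eqref{g.ge.lower} and \eqref{g.ge.2lower} simultaneously. For the upper bound \eqref{g.le.upper}, I would compute $|p-q|^2-g^2=(p^0-q^0)^2\geq 0$.

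For group (iii), \eqref{j.le.l} follows from the identity \eqref{l2j2}, which I would verify by direct algebraic expansion: setting $a=p^0q^0-p\cdot q$, so that $g^2=2(a-1)$ and $s=2(a+1)$, one reduces the identity to $(a-1)(p^0+q^0)^2-(a+1)|p-q|^2=2|p\times q|^2$, and both sides expand to $2\bigl((p^0q^0)^2-(p^0)^2-(q^0)^2-(p\cdot q)^2+1\bigr)$. The lower bound \eqref{l2j2size} then drops out since $\sqrt{g^2+4}/g\geq 1$. The bounds in \eqref{l.upper.ineq} come from $l\leq (p^0+q^0)/4\leq p^0q^0/2$ via \eqref{p0.plus.q0.le.p0q0}, and $j^2=|p\times q|^2/(4g^2)\leq p^0q^0/4$ via \eqref{g.ge.2lower}. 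Finally, for (iv), I observe that $s_\Lambda=g_\Lambda^2+4=(s/2)(\sqrt{y^2+1}+1)$, from which $s_\Lambda\leq s\sqrt{y^2+1}$ (since $\sqrt{y^2+1}+1\leq 2\sqrt{y^2+1}$) and $g_\Lambda^2\leq s_\Lambda$ are immediate. For the lower bound on $g_\Lambda^2$, I split into $y\leq 1$ (where $\max=\sqrt{2}$ and $g_\Lambda^2\geq g^2$ suffices) and $y\geq 1$ (where $\sqrt{y^2+1}-1\gtrsim \sqrt{y^2+1}$ and $s\geq g^2$ give $g_\Lambda^2\gtrsim g^2\sqrt{y^2+1}$). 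The only mild subtlety throughout is keeping track of the conjugate identities in group (ii) and the bookkeeping in group (iii); nothing is deep, and the proof is a matter of assembling the algebra cleanly.
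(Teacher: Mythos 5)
Your proof is correct and follows essentially the same route as the paper's: a direct elementary verification of each inequality, with the key identity \eqref{l2j2} established by the same algebraic expansion (your substitution $a=p^0q^0-p\cdot q$ merely reorganizes the bookkeeping, and deriving \eqref{j.le.l} from \eqref{l2j2} rather than from \eqref{g.ge.2lower} is an equivalent shortcut). The only substantive difference is that you also prove the conjugate identity behind \eqref{g.ineq.sharp}, which the paper simply quotes from \cite{GS3}; that makes your write-up slightly more self-contained but is not a different method.
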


Further, in the proofs below we will also need sharp estimates of the following integrals
\begin{equation}\label{int.Kgamma}
    \bar{K}_\gamma(l,j)\eqdef \int_0^1dy\  y^{1-\gamma}  \exp(- l\sqrt{y^2+1})I_0(jy),
\end{equation}
where $\gamma \in (0,2)$
and
\begin{equation}\label{J2.special}
J_2(l,j)\eqdef \int_{0}^\infty\ dy\ \frac{y\exp\left(-l\sqrt{y^2+1}\right) I_0(j y)}{\sqrt{1+y^2}}
\end{equation} 
Also define
\begin{equation}\label{tildeK2def}
\tilde{K}_2(l,j)\eqdef \int_{0}^\infty\ dy\ y(y^2+1)^{1/2}\exp\left(-l\sqrt{y^2+1}\right) I_0(j y).
\end{equation} 

These integrals are known from \cite{Gradshteyn:1702455} and \cite{GS3}.  In particular a proof of Lemma \ref{lem:integral.ests} below is given by combining the results from \cite[Lemma 3.5, Lemma 3.6 and Corollary 2]{GS3}.   We give the following lemma and proof for completeness.

\begin{lemma}\label{lem:integral.ests}
For \eqref{lj}, we have
\begin{equation}\label{max.bound}\max_{0\leq x\leq 1}\exp(-l\sqrt{x^2+1}+jx) \lesssim \exp(-\sqrt{l^2-j^2}).\end{equation}
Then for \eqref{int.Kgamma}, we have the uniform estimate
	\begin{equation}\label{smally.lemma}
|\bar{K}_\gamma(l,j)|
	\lesssim
	 \exp\left(-\sqrt{l^2-j^2}\right).
	\end{equation}
For \eqref{J2.special} we have the exact formula
\begin{equation}\label{J2.lemma}
J_2(l,j)= (\sqrt{l^2-j^2})^{-1}\exp(-\sqrt{l^2-j^2}),
\end{equation} 
and then for \eqref{tildeK2def} we have the formula
\begin{multline}\label{k2lj.lemma}
\tilde{K}_2(l,j)= (\sqrt{l^2-j^2})^{-5}\exp(-\sqrt{l^2-j^2})\\\times \left((l^2-j^2+3\sqrt{l^2-j^2}+3)l^2-(l^2-j^2)-(\sqrt{l^2-j^2})^3\right).
\end{multline}	
\end{lemma}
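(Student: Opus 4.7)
The plan is to handle the four claims in order, since \eqref{smally.lemma} follows quickly from \eqref{max.bound}, and \eqref{k2lj.lemma} follows from \eqref{J2.lemma} by differentiation in $l$. The main obstacle will be the closed-form evaluation of $J_2(l,j)$ in \eqref{J2.lemma}, which rests on a nontrivial Bessel-function integral identity.

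For \eqref{max.bound}, I would compute the critical point of $h(x) := -l\sqrt{x^2+1}+jx$ on $[0,\infty)$: setting $h'(x)=-lx/\sqrt{x^2+1}+j=0$ gives $x_\ast = j/\sqrt{l^2-j^2}$ (valid since $l\geq j$ by \eqref{j.le.l}), at which $\sqrt{x_\ast^2+1}=l/\sqrt{l^2-j^2}$ and therefore $h(x_\ast)=-\sqrt{l^2-j^2}$. Since $h$ is strictly concave on $[0,\infty)$ (as $h''(x)=-l(x^2+1)^{-3/2}<0$), its maximum over $[0,1]$ is at most $h(x_\ast)$, proving \eqref{max.bound}. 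Then for \eqref{smally.lemma}, I use the elementary bound $I_0(jy)\leq e^{jy}$, which follows immediately from $\cos\phi\leq 1$ in the representation \eqref{bessel0}, to obtain $|\bar{K}_\gamma(l,j)|\leq \int_0^1 y^{1-\gamma}\exp(-l\sqrt{y^2+1}+jy)\,dy$; estimating the exponential factor by \eqref{max.bound} and noting that $\int_0^1 y^{1-\gamma}\,dy = 1/(2-\gamma)$ is finite for $\gamma\in(0,2)$ gives the claim.

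For \eqref{J2.lemma}, the key step is to recognize $J_2$ as a classical Laplace-type transform of a modified Bessel function. Substituting $t=\sqrt{y^2+1}$, so that $t\,dt=y\,dy$, transforms $J_2(l,j)$ into $\int_1^\infty e^{-lt}\, I_0(j\sqrt{t^2-1})\,dt$, which evaluates to $(l^2-j^2)^{-1/2}\exp(-\sqrt{l^2-j^2})$ by a standard integral-table entry (Gradshteyn-Ryzhik 6.616.5). For a self-contained derivation, one can substitute the representation \eqref{bessel0} of $I_0$, swap the order of integration by Fubini, substitute $t=\cosh\tau$ in the inner integral, and then combine $l\cosh\tau - j\cos\phi\sinh\tau = \sqrt{l^2-j^2\cos^2\phi}\cosh(\tau-\alpha)$ with $\tanh\alpha = j\cos\phi/l$, so that the $\tau$ integral reduces to an elementary exponential integral and the remaining $\phi$ integral collapses after an additional change of variable. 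This evaluation is the technical heart of the lemma.

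Finally, for \eqref{k2lj.lemma}, I exploit the identity $\tilde{K}_2(l,j)=\partial_l^2 J_2(l,j)$, obtained by differentiating under the integral sign twice and justified by the uniform exponential decay for $l>j$. Writing $u=\sqrt{l^2-j^2}$ so that $\partial_l u = l/u$ and $\partial_l^2 u = -j^2/u^3$, the chain rule applied to $J_2=e^{-u}/u$ yields, after straightforward algebra, $\partial_l^2 J_2 = \tfrac{e^{-u}}{u^5}\bigl[l^2(u^2+3u+3) - u^3 - u^2\bigr]$. Recognizing $u^2=l^2-j^2$ and $u^3=(l^2-j^2)^{3/2}$ in the last two terms reproduces exactly the form claimed in \eqref{k2lj.lemma}.
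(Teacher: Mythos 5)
Your proof is correct and follows essentially the same route as the paper: critical-point analysis of $-l\sqrt{x^2+1}+jx$ together with $I_0(jy)\le e^{jy}$ for \eqref{max.bound}--\eqref{smally.lemma}, the substitution $t=\sqrt{y^2+1}$ and the standard table entry $\int_1^\infty e^{-lt}I_0(j\sqrt{t^2-1})\,dt=(l^2-j^2)^{-1/2}e^{-\sqrt{l^2-j^2}}$ for \eqref{J2.lemma}, and the identity $\tilde{K}_2=\partial_l^2 J_2$ for \eqref{k2lj.lemma}, which you verify by an explicit chain-rule computation that the paper instead delegates to \cite{GS3}. The only cosmetic differences are that you use concavity of $h$ rather than checking the endpoint $h(1)$ directly, and your optional ``self-contained'' hyperbolic-substitution sketch for the Bessel integral is looser than the rest but is not needed since the table citation already suffices.
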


Lemma's \ref{lem:useful.ests} and \ref{lem:integral.ests} will be proven in Appendix \ref{sec:pointwiseProof}.

\subsection{Related literature}  In this section, we briefly give citations for a small number of closely related results on the mathematical theory on the Boltzmann equation. Regarding the Newtonian Boltzmann equation without angular cutoff, we mention the works  \cite{MR839310, MR4049224, MR3551261, MR3572500, MR4107942, MR3665667, MR2847536, MR2679369, MR2793203, MR2629879, MR2784329}. 
The mathematical theory on the relativistic Boltzmann equation with angular cutoff includes \cite{GS3,  MR1275397, MR2728733, MR2891870, MR1402446, MR2102321, Cal, MR2679588, Dudynski2, D-E3, D-E2, MR818441,wang2018global,duan2017relativistic,lee2013spatially,ha2009uniform,ha2007asymptotic}. 
This list includes in particular the discussions on the global well-posedness and stability, the regularity of solutions, the blow-up with the gain-term only, the Newtonian limit, and the causality. Further detailed discussions of the related literature are found in our companion paper \cite{RelBolNoncut2020}.

\subsection{Outline} \label{outline}
To prove our main Theorem \ref{main.thm}, we will use the two different representations that we have given in Section \ref{sec:main.decomp} and these will be derived in Appendix \ref{sec:derivation}.  We will further follow the proof strategy that we have already outlined in Section \ref{sec:proof.outline}.  We  prove that $\zeta$ from \eqref{def.zeta} has a leading order positive lower bound in Proposition \ref{prop.coercive} in Section \ref{sec:leadingorder lower bound zeta}.  Then we will prove that $\zeta_0$  from \eqref{zeta0By} has the leading order upper bound in Proposition \ref{prop.zeta0.asymptotic} in Section \ref{sec:fullsharpupper zeta0}.    In Section \ref{sec:low order zeta1}, we prove in Proposition \ref{prop.zetaL.asymptotic}  that $\zetaL(p)$ from \eqref{zetaLBy} has a lower order upper bound and we further prove in Proposition \ref{prop.zetaL.asymptoticnew} that $\zetaTL(p)$ from \eqref{eq:tildezetanew1L} has a lower order upper bound.

\section{Leading order lower bound estimate}\label{sec:leadingorder lower bound zeta}

The main result in this section is the following leading order lower bound.

\begin{proposition}\label{prop.coercive}	Suppose $\gamma \in (0,2)$ in \eqref{angassumption}. Then for both hard \eqref{hard} and soft \eqref{soft} interactions, using the notation \eqref{rho.def}, for \eqref{def.zeta}, we have
	$$\zeta(p) \gtrsim (p^0)^{\frac{\singS+\gamma}{2}}.$$
This uniform lower bound also holds for \eqref{def.zeta.mod}.
\end{proposition}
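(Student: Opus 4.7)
For $|p|\le 1$ the claim is immediate: the summand $\langle p\rangle^{(\singS+\gamma)/2}1_{|p|\le 1}$ in \eqref{def.zeta} already provides a lower bound of the required order since $p^0\approx\langle p\rangle\approx 1$.

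For $|p|\ge 1$, the plan is to lower-bound the positive integrand in \eqref{def.zeta} by restricting all three integrations to a subdomain on which every factor admits a sharp pointwise estimate of the correct order. The critical choice is to place $q$ in a cone \emph{approximately perpendicular} to $p$: for instance
\begin{equation*}
A(p)\eqdef\{q\in\rth:|q|\in[\tfrac14,\tfrac12],\ |p\cdot q|\le \tfrac{1}{10}|p||q|\},
\end{equation*}
a set of uniformly positive Lebesgue measure which is contained in $\qlep$ since $m>1$ and $|p|\ge 1$. By \eqref{s}--\eqref{g}, for $q\in A(p)$ one has $q^0\approx 1$, $s\approx p^0 q^0\approx p^0$, and $g^2=s-4\approx p^0$. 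Hence $g\approx\sqrt{p^0}$, $\sqrt{s}/g\approx 1$, and $\Phi(g)\approx (p^0)^{\singS/2}$; in addition, by \eqref{lj} and Lemma \ref{lem:useful.ests}, $l\approx p^0$ and $j\approx \sqrt{p^0}$. Inside $A(p)$, the further restrictions $y\in[c(p^0)^{-1/2},1]$ (with $c$ a large absolute constant) and $\phi\in[0,\pi]$ keep $\theta_\Lambda\approx y$ small — here $s\approx g^2$ makes the formula after \eqref{g2y.variable} reduce to $\sin^2(\theta_\Lambda/2)\approx y^2/4$ — so by \eqref{angassumption} we have $\sigma_0(\theta_\Lambda)\approx y^{-2-\gamma}$, while $g_\Lambda\approx g\approx\sqrt{p^0}$ gives $\tfrac{s\Phi(g)g^4}{g_\Lambda^4}\approx (p^0)^{1+\singS/2}$. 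Since $\sigma=\Phi\sigma_0$, the combined kernel factor simplifies to $s_\Lambda\sigma(g_\Lambda,\theta_\Lambda)\tfrac{s\Phi(g)g^4}{s_\Lambda\Phi(g_\Lambda)g_\Lambda^4}=\sigma_0(\theta_\Lambda)\tfrac{s\Phi(g)g^4}{g_\Lambda^4}\approx y^{-2-\gamma}(p^0)^{1+\singS/2}$.

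The remaining factor is $\bigl[\exp(l-l\sqrt{y^2+1}+jy\cos\phi)-1\bigr]^2$. Its exponent is at most $-l(\sqrt{y^2+1}-1)+jy\le -ly^2/3+jy$ (using $\sqrt{y^2+1}-1\ge y^2/3$ for $y\in[0,1]$); since $ly^2\gtrsim c^2$ and $jy\lesssim c$ at the lower endpoint $y=c(p^0)^{-1/2}$, taking $c$ large enough makes the exponent uniformly less than $-1$, so the square is bounded below by a positive constant throughout the chosen region. Assembling the measure factors $\tfrac{1}{p^0}\cdot\tfrac{e^{-q^0}}{q^0}\cdot\tfrac{\sqrt{s}}{g}\cdot\tfrac{y}{\sqrt{y^2+1}}\approx (p^0)^{-1}y$ yields the pointwise lower bound $(p^0)^{\singS/2}y^{-1-\gamma}$ on $A(p)\times[c(p^0)^{-1/2},1]\times[0,\pi]$; the $q$- and $\phi$-integrations contribute a fixed positive constant, while
\begin{equation*}
\int_{c(p^0)^{-1/2}}^{1}y^{-1-\gamma}\,dy\gtrsim (p^0)^{\gamma/2},
\end{equation*}
combining to give $\zeta(p)\gtrsim (p^0)^{(\singS+\gamma)/2}$. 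The same reasoning, with the restriction $\qlep$ removed, applies verbatim to \eqref{def.zeta.mod}.

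\textbf{Main obstacle.} The non-obvious design choice is the near-\emph{perpendicular} orientation of the $q$-cone. A naive bounded-$q$ region of generic direction forces $|p\cdot q|\approx |p||q|$ and $g\approx p^0$, which leads only to the strictly weaker bound $\zeta(p)\gtrsim (p^0)^{\singS+1/2+\gamma}$; this is insufficient whenever $\singS+\gamma<-1$ (soft interactions with $\singB$ close to $2$). Enforcing $p\cdot q\approx 0$ collapses $g$ to $\sqrt{p^0}$ and is precisely what produces the correct $(p^0)^{\singS/2}$ power from $\Phi(g)$. A second delicate balance is the $y$-range $[c(p^0)^{-1/2},1]$, which must simultaneously keep $\theta_\Lambda$ small (for singularity of $\sigma_0$), keep $[\exp(\cdot)-1]^2$ bounded below (forcing $c$ sufficiently large), and produce the sharp $(p^0)^{\gamma/2}$ enhancement from the integration of $y^{-1-\gamma}$.
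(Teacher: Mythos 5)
Your argument is essentially correct for large $|p|$ and takes a genuinely different route from the paper's. The paper works near the angular singularity: it restricts to $0\le y\le y_1\approx j/l$ and $\phi\in[\pi/4,\pi/3]$, where the exponent $l-l\sqrt{y^2+1}+jy\cos\phi$ is small and \emph{nonnegative}, so that $[\exp(\cdot)-1]^2\ge (l-l\sqrt{y^2+1}+jy\cos\phi)^2\gtrsim (jy)^2$; it then keeps the full $q$-integral, integrates $y^{1-\gamma}$ up to $y_1$ to produce $j^{4-\gamma}l^{\gamma-2}$, and finishes with a spherical-coordinate computation in $q$. You instead push $y$ \emph{away} from the singularity, to $y\gtrsim c(p^0)^{-1/2}$, where the exponent is uniformly $\le -1$ and the square is bounded below by a constant, and you extract the $(p^0)^{\gamma/2}$ gain directly from $\int_{c(p^0)^{-1/2}}^1 y^{-1-\gamma}\,dy$; the price is that you must localize $q$ to a near-perpendicular cone to pin down $g\approx\sqrt{p^0}$, $j\approx\sqrt{p^0}$, $l\approx p^0$. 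Both mechanisms capture the same dominant contribution, and your pointwise estimates (the kernel equivalence, $\theta_\Lambda\approx y$, the fact that $-ly^2/3+jy$ is decreasing past its vertex at $y\approx(p^0)^{-1/2}$, and the fact that $\cos\theta_\Lambda\ge0$ on your region for large $p^0$) all check out.

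The one genuine gap is the intermediate range $1\le|p|\lesssim c^2$. Since $c$ must be taken large (to beat the constants in $l\ge p^0/4$ and $j\le C_1\sqrt{p^0}$), the interval $[c(p^0)^{-1/2},1]$ is empty, or too short for $\int y^{-1-\gamma}\,dy\gtrsim(p^0)^{\gamma/2}$, whenever $p^0\lesssim c^2$; and for $|p|>1$ the additive term $\langle p\rangle^{(\rho+\gamma)/2}1_{|p|\le1}$ is no longer available to bail you out. On this compact range the claim reduces to $\zeta(p)\gtrsim1$, which still requires an argument: e.g., observe that $\zeta(p)>0$ pointwise (the integrand is nonnegative and strictly positive off the zero set of the exponent) and that $\zeta$ is continuous in $p$, then invoke compactness; alternatively, run the paper's small-$y$, positive-exponent region there. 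You should also state explicitly that the worst case of your exponent bound over the whole $y$-interval occurs at the left endpoint (it does, by the monotonicity noted above), since ``uniformly less than $-1$'' is asserted in your write-up only at $y=c(p^0)^{-1/2}$.
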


\begin{proof}[Proof of Proposition \ref{prop.coercive}]In order to obtain the lower-bound estimate for $\zeta(p)$, we first study the lower bound of the perfect square term $$\Big[\exp(l-l\sqrt{y^2+1} +jy\cos\phi)-1\Big]^2$$ in \eqref{def.zeta}. We first observe that, if $y\in [0,y^*]$ with
\begin{equation}\label{def.y.star}
y^*\eqdef \frac{2lj\cos\phi}{l^2-j^2\cos^2\phi},
\end{equation}
then we have$$ l-l\sqrt{y^2+1} +jy\cos\phi\ge 0.$$
Notice that we also have
\begin{equation}\notag
    l-l\sqrt{y^2+1}+jy\cos\phi \ge \frac{1}{2} jy\cos\phi, \quad
\end{equation}
if  $0 \le y \le y_1,$ where 
\begin{equation}\notag
y_1 \eqdef \frac{2l j \cos\phi }{4l^2-j^2 \cos^2\phi}.
\end{equation}
Also $\frac{1}{2} \le \cos\phi \le \frac{\sqrt{2}}{2}$ for $\phi\in [\pi/4,\pi/3]$.
Recalling \eqref{def.y.star}, we remark that $y^*\le 3$ because
$$
y^* = \frac{2lj\cos\phi}{l^2-j^2\cos^2\phi}\le \frac{
\sqrt{2}lj}{l^2-\frac{j^2}{2}}\le \frac{
\sqrt{2}l^2}{\frac{l^2}{2}}\le 2 \sqrt{2},
$$ as $j\le l$ and $\frac{
\sqrt{2}lj}{l^2-\frac{j^2}{2}}$ is an increasing function in $j$.
Recalling again \eqref{def.y.star}, then $0\le y_1\le \frac{y^*}{4}\le \frac{\sqrt{2}}{2}$.
Since in particular with $\phi\in [\pi/4,\pi/3]$ we have
\begin{equation}\notag
    l-l\sqrt{y^2+1}+jy\cos\phi \ge 0, \quad 0 \le y \le y_1.
\end{equation}
Then by the Taylor expansion
\begin{equation}\notag
    \Big[\exp(l-l\sqrt{y^2+1} +jy\cos\phi)-1\Big]^2
    \ge (l-l\sqrt{y^2+1}+jy\cos\phi)^2.
\end{equation}
We will use this lower bound in the following developments.

Now we start by proving the stated lower bound for \eqref{def.zeta.mod}.
Now we split each integral representation of the decomposed pieces based on a restriction of the $y$ and $\phi$ domains.  We will now define the term 
$$
\zeta_* \eqdef [\zeta']_{0\le y\le y_1\text{ and }\phi\in [\pi/4,\pi/3]},
$$
where $\zeta_*$ is $\zeta'$ when the integrals inside \eqref{def.zeta.mod} are only on the restricted domains $0\le y\le y_1\text{ and }\phi\in [\pi/4,\pi/3]$. 
This notation is similar to \eqref{convention}.   Note that of course $\zeta'(p) \ge \zeta_*(p)$.  We will show that $\zeta_*(p)$ has a high-order lower bound. Note that inside this integration region, $0\le y\le y_1\text{ and }\phi\in [\pi/4,\pi/3]$, the integral is still non-negative.

First of all, we note from \eqref{def.zeta.mod} that
\begin{multline*}
\zeta_*(p)
\ge \frac{c'}{2\pi p^0}\int_{\rth}\frac{dq}{q^0}\frac{e^{-q^0}\sqrt{s}}{g}\int_{\pi/4}^{\pi/3} d\phi \int^{y_1}_{0}\frac{ydy}{\sqrt{y^2+1}}s_\Lambda\sigma(g_\Lambda,\theta_\Lambda)
\\
\times (l-l\sqrt{y^2+1}+jy\cos\phi)^2\frac{s\Phi(g)g^4}{s_\Lambda  \Phi(g_\Lambda)g^4_\Lambda}+\langle p \rangle^{(\rho+\gamma)/2} 1_{|p|\le 1}
\\
\ge
\frac{c'}{2\pi p^0}\int_{\rth}\frac{dq}{q^0}\frac{e^{-q^0}\sqrt{s}}{g}\int_{\pi/4}^{\pi/3} d\phi \int^{y_1}_{0}\frac{ydy}{\sqrt{y^2+1}}s_\Lambda\sigma(g_\Lambda,\theta_\Lambda)\\\times\frac{1}{4}\left( jy\cos\phi \right)^2\frac{s\Phi(g)g^4}{s_\Lambda  \Phi(g_\Lambda)g^4_\Lambda}+\langle p \rangle^{(\rho+\gamma)/2} 1_{|p|\le 1}
\\
\ge \frac{c'}{2\pi p^0}\int_{\rth}\frac{dq}{q^0}\frac{e^{-q^0}\sqrt{s}}{g}\int_{\pi/4}^{\pi/3} d\phi \int^{y_1}_{0}\frac{ydy}{\sqrt{y^2+1}}s_\Lambda\sigma(g_\Lambda,\theta_\Lambda)\\\times
\frac{1}{4}\left(\frac{jy}{2}\right)^2\frac{s\Phi(g)g^4}{s_\Lambda  \Phi(g_\Lambda)g^4_\Lambda}+\langle p \rangle^{(\rho+\gamma)/2} 1_{|p|\le 1},
\end{multline*} where we used that $\cos\phi\ge \frac{1}{2}$ when $\phi\in[\pi/4,\pi/3].$

 Now we will estimate the kernel $\sigma(g_\Lambda,\theta_\Lambda)$ from \eqref{kernel.product}.  Here, by \eqref{angassumption} with \eqref{g2y.variable}, \eqref{changeofv}, \eqref{gl} and \eqref{tl} we have
\begin{equation}\label{ang.equiv}
\sigma_0(\theta_\Lambda)\approx  \left(\frac{sy^2}{sy^2+2g^2(\sqrt{y^2+1}+1)}\right)^{-1-\gamma/2}.
\end{equation}
Next using \eqref{g2y.variable} we have that 
\begin{multline}\label{glambda.calc}
g^2_\Lambda=g^2+\frac{s}{2}(\sqrt{y^2+1}-1)=
g^2+\frac{sy^2}{2(\sqrt{y^2+1}+1)}\\
=\frac{sy^2+2g^2(\sqrt{y^2+1}+1)}{2(\sqrt{y^2+1}+1)}.
\end{multline}
 Thus, also recalling \eqref{rho.def} and \eqref{glambda.calc}, we have
\begin{multline}\notag
s_\Lambda\sigma(g_\Lambda,\theta_\Lambda)\frac{s\Phi(g)g^4}{s_\Lambda  \Phi(g_\Lambda)g^4_\Lambda}=s\Phi(g)	\sigma_0(\theta_\Lambda)\frac{g^4}{g^4_\Lambda}\\
\approx s\Phi(g)g^4 \left(\frac{sy^2}{sy^2+2g^2(\sqrt{y^2+1}+1)}\right)^{-1-\gamma/2}\frac{1}{g^4_\Lambda}.
\end{multline}
Thus
\begin{multline}\notag
s_\Lambda\sigma(g_\Lambda,\theta_\Lambda)\frac{s\Phi(g)g^4}{s_\Lambda  \Phi(g_\Lambda)g^4_\Lambda}
\approx sg^{\singS+4} \left(\frac{sy^2}{sy^2+2g^2(\sqrt{y^2+1}+1)}\right)^{-1-\gamma/2}
\\
\times\left(\frac{sy^2+2g^2(\sqrt{y^2+1}+1)}{2(\sqrt{y^2+1}+1)}\right)^{-2}.
\end{multline}
We conclude that
\begin{multline}\label{kernel.equiv}
s_\Lambda\sigma(g_\Lambda,\theta_\Lambda)\frac{s\Phi(g)g^4}{s_\Lambda  \Phi(g_\Lambda)g^4_\Lambda}=s\Phi(g)	\sigma_0(\theta_\Lambda)\frac{g^4}{g^4_\Lambda}\\
\approx s^{-\gamma/2}g^{\singS+4}y^{-2-\gamma}(2(\sqrt{y^2+1}+1))^{2}\left(sy^2+2g^2(\sqrt{y^2+1}+1)\right)^{-1+\gamma/2}.
\end{multline}
Thus, since $\gamma \in (0,2)$, we have 
\begin{equation*}
 s_\Lambda\sigma(g_\Lambda,\theta_\Lambda)\frac{s\Phi(g)g^4}{s_\Lambda  \Phi(g_\Lambda)g^4_\Lambda}   \gtrsim s^{-1}y^{-2-\gamma} g^{\rho+4},
\end{equation*}
where we used $\gamma/2-1<0$ and $y\le y_1\le  \frac{\sqrt{2}}{2}$. 
Therefore,
\begin{multline*}
\zeta_*(p)
\gtrsim  \frac{c'}{ p^0}\int_{\rth}\frac{dq}{q^0}\frac{e^{-q^0}\sqrt{s}}{g} \int_{\pi/4}^{\pi/3} d\phi\\\times  \int^{y_1}_{0}\frac{ydy}{\sqrt{y^2+1}}j^2s^{-1}y^{-\gamma} g^{4+\rho}+\langle p \rangle^{(\rho+\gamma)/2} 1_{|p|\le 1}.
\end{multline*} 
Then we have
\begin{multline*}
\zeta_*(p)
\gtrsim  \frac{c'}{ p^0}\int_{\rth}\frac{dq}{q^0}\frac{e^{-q^0}\sqrt{s}}{g} 
j^2s^{-1} g^{4+\rho}
\int^{y_1}_{0} y^{1-\gamma}dy+\langle p \rangle^{(\rho+\gamma)/2} 1_{|p|\le 1}
\\
\gtrsim  \frac{c'}{ p^0}\int_{\rth}\frac{dq}{q^0}\frac{e^{-q^0}\sqrt{s}}{g} 
j^2s^{-1} g^{4+\rho}
 y_1^{2-\gamma}+\langle p \rangle^{(\rho+\gamma)/2} 1_{|p|\le 1}.
\end{multline*} 
We further have on $\phi \in [\pi/4, \pi/3]$, using also $j \le l$, that
\begin{equation}\notag
y_1^{2-\gamma}
= \left(\frac{2lj\cos\phi}{4l^2-j^2\cos^2\phi}\right)^{2-\gamma}
\geq \left(\frac{lj}{4l^2-j^2/4}\right)^{2-\gamma}
\gtrsim
\left(\frac{j}{l}\right)^{2-\gamma},
\end{equation} 
as $\cos\phi \ge \frac{1}{2}$.  Altogether, we have
\begin{multline*}
\zeta_*(p)
\gtrsim  \frac{c'}{ p^0}\int_{\rth}\frac{dq}{q^0}\frac{e^{-q^0}\sqrt{s}}{g}  j^2s^{-1} g^{4+\rho} \left(\frac{j}{l}\right)^{2-\gamma}+\langle p \rangle^{(\rho+\gamma)/2} 1_{|p|\le 1}
\\
\gtrsim  \frac{c'}{ p^0}\int_{\rth}\frac{dq}{q^0}\frac{e^{-q^0}\sqrt{s}}{g}  s^{-1} g^{4+\rho} j^{4-\gamma}l^{-2+\gamma}+\langle p \rangle^{(\rho+\gamma)/2} 1_{|p|\le 1}. 
\end{multline*}
Now we recall \eqref{lj}, \eqref{s.ge.g2}, \eqref{g.ge.lower} and note that $\gamma\in(0,2)$. 
Then we obtain
\begin{multline}\label{eq.sameuntilhere}
\zeta_*(p)
\gtrsim   \frac{1}{ p^0}\int_{\rth}\frac{dq}{q^0}e^{-q^0}  g^{-1+\gamma+\rho} |p\times q|^{4-\gamma}s^{-1/2}(p^0+q^0)^{-2+\gamma} +\langle p \rangle^{(\rho+\gamma)/2} 1_{|p|\le 1}\\
\gtrsim   \frac{1}{ p^0}\int_{\rth}\frac{dq}{q^0}e^{-q^0}  s^{-1/2}g^{\gamma+\rho-1} |p\times q|^{4-\gamma}(p^0q^0)^{-2+\gamma}+\langle p \rangle^{(\rho+\gamma)/2} 1_{|p|\le 1},
\end{multline}
above we also used \eqref{p0.plus.q0.le.p0q0}.
Further, since \eqref{s.le.pq}, we have 
$$s^{-1/2}\gtrsim (p^0q^0)^{-1/2}.$$
If $\gamma+\rho -1\ge 0$, 
then from \eqref{g.ge.lower} and \eqref{p0q0.le.pq} we have
$$g^{\gamma+\rho-1}\ge \left(\frac{|p-q|}{\sqrt{p^0q^0}}\right)^{\gamma+\rho-1}\ge \left(\frac{|p^0-q^0|}{\sqrt{p^0q^0}}\right)^{\gamma+\rho-1}.$$ Otherwise, when $\gamma+\rho -1< 0$, using \eqref{s.le.pq} with \eqref{s.ge.g2} we have
$$g^{\gamma+\rho-1}\ge (p^0q^0)^{\gamma/2+\rho/2-1/2}.$$
Finally, we use the spherical-coordinate representation of $q\mapsto (r,\theta_q,\phi_q)$. We let the $z$-axis be parallel to the direction of $p$ such that $\phi_q$ is the angle between $p$ and $q$. Then we have
\begin{multline}\label{eq.q.last}
\zeta_*(p)
\gtrsim \frac{1}{p^0}\int_0^{\infty}dr \frac{r^2e^{-\sqrt{1+r^2}}}{\sqrt{r^2+1}} \int_0^\pi d\phi_q \ \sin\phi_q   \\\times (p^0q^0)^{-1/2}\min\left\{\left(\frac{|p^0-q^0|}{\sqrt{p^0q^0}}\right)^{\gamma+\rho-1}, (p^0q^0)^{\gamma/2+\rho/2-1/2}\right\}\\\times |p|^{4-\gamma}r^{4-\gamma} \sin^{4-\gamma}\phi_q(p^0q^0)^{-2+\gamma}+\langle p \rangle^{(\rho+\gamma)/2} 1_{|p|\le 1}\\
\approx |p|^{4-\gamma} (p^0)^{-1-1/2+\gamma/2+\rho/2-1/2-2+\gamma}+\langle p \rangle^{(\rho+\gamma)/2} 1_{|p|\le 1}\\
\approx |p|^{4-\gamma}(p^0)^{-4+3\gamma/2+\rho/2} +\langle p \rangle^{(\rho+\gamma)/2} 1_{|p|\le 1}.
\end{multline}
Now we remark that if $|p|\ge 1$ then we have $|p|\approx p^0$. We conclude
$$
\zeta'(p) 
\ge 
\zeta_*(p)\gtrsim (p^0)^{\frac{\rho}{2}+\frac{\gamma}{2}}.$$
This completes the proof for the high-order lower bound of $\zeta'(p)$.

Similarly, we can obtain the high-order lower bound of $\zeta(p)$ from \eqref{def.zeta}. Note that the only difference between $\zeta(p)$ and $\zeta'(p)$ from \eqref{def.zeta} and \eqref{def.zeta.mod} is that the domain $\rth$ with respect to $q$ variable in \eqref{def.zeta.mod} is now restricted to $|q|\le \frac{1}{2}|p|^{1/m}$ in \eqref{def.zeta}. Then we note that the proof for the high-order lower bound of $\zeta(p)$ is exactly the same as $\zeta'(p)$ until 
\eqref{eq.sameuntilhere} above except for the change from $\int_\rth dq$ into $\int_{|q|\le\frac{1}{2}|p|^{1/m}} dq $. 
Then in the spherical-coordinate representation of $q\mapsto (r,\theta_q,\phi_q)$ for \eqref{eq.q.last}, we change the integral domain $\int_0^\infty dr$ in \eqref{eq.q.last} to $\int_0^{\frac{1}{2}|p|^{1/m}} dr$. Then analogous to \eqref{eq.q.last} we have
\begin{multline}\notag
\zeta(p)
\gtrsim 
\frac{1}{p^0}\int_0^{\frac{1}{2}|p|^{1/m}} dr \frac{r^2e^{-\sqrt{1+r^2}}}{\sqrt{r^2+1}} \int_0^\pi d\phi_q \ \sin\phi_q   \\\times (p^0q^0)^{-1/2}\min\left\{\left(\frac{|p^0-q^0|}{\sqrt{p^0q^0}}\right)^{\gamma+\rho-1}, (p^0q^0)^{\gamma/2+\rho/2-1/2}\right\}\\\times |p|^{4-\gamma}r^{4-\gamma} \sin^{4-\gamma}\phi_q(p^0q^0)^{-2+\gamma}+\langle p \rangle^{(\rho+\gamma)/2} 1_{|p|\le 1}.
\end{multline}
Now in the region ${|q|\le\frac{1}{2}|p|^{1/m}}$  with $|p| \ge 1$ and $m\ge 1$ sufficiently large inside \eqref{eq.q.last}  we have 
\begin{multline*}
    \min\left\{\left(\frac{|p^0-q^0|}{\sqrt{p^0q^0}}\right)^{\gamma+\rho-1}, (p^0q^0)^{\gamma/2+\rho/2-1/2}\right\}
    \\
    \gtrsim
    (p^0)^{\gamma/2+\rho/2-1/2}
        \min\left\{(q^0)^{-\gamma/2-\rho/2+1/2}, (q^0)^{\gamma/2+\rho/2-1/2}\right\}
\end{multline*}
We further have on $|p| \ge 1$ with $q^0 = \sqrt{1+r^2}$ that 
\begin{multline}\notag
\int_0^{\frac{1}{2}|p|^{1/m}} dr  \frac{r^2e^{-\sqrt{1+r^2}}}{\sqrt{r^2+1}} 
r^{4-\gamma}
\min\left\{(q^0)^{-\gamma/2-\rho/2}, (q^0)^{\gamma/2+\rho/2-1}\right\} (q^0)^{-2+\gamma}
\\
\gtrsim
\int_0^{\frac{1}{2}} dr  \frac{r^2e^{-\sqrt{1+r^2}}}{\sqrt{r^2+1}} 
r^{4-\gamma}
\min\left\{(q^0)^{-\gamma/2-\rho/2}, (q^0)^{\gamma/2+\rho/2-1}\right\} (q^0)^{-2+\gamma}
\gtrsim
c_{\frac{1}{2}},
\end{multline}
for some constant $c_{\frac{1}{2}}>0$ if $|p|\ge 1.$ 
Therefore, the same proof with the modifications above works for the leading-order lower bound of $\zeta(p)$ from \eqref{def.zeta}.  In particular the estimate \eqref{eq.q.last} continues to hold,  and this completes the leading-order lower-bound estimates.   
\end{proof}

This completes the leading order lower bound estimates of $\zeta(p)$. In the next two sections, we will use the decomposition $\tilde{\zeta}(p)=\zeta_0(p)+\zeta_L(p)$ from \eqref{zeta0By} and \eqref{zetaLBy} to obtain the leading order upper bound of $\tilde{\zeta}(p)$, and the lower order upper bounds of $\zetaL(p)$ and $\zetaTL(p)$ from \eqref{eq:tildezetanew1L}.

\section{Leading order upper bound estimates}\label{sec:fullsharpupper zeta0}

We now prove the following leading order upper bound estimate for $\zetaZ$ from \eqref{zeta0By} using the alternative representation \eqref{zeta0B}:

\begin{proposition}\label{prop.zeta0.asymptotic}
	Suppose $\gamma \in (0,2)$  in \eqref{angassumption}. Then for both hard \eqref{hard} and soft \eqref{soft} interactions, for \eqref{zeta0B} when $|p|\ge 1$, we have
	$$|\zetaZ(p)|\lesssim (p^0)^{\frac{\singS+\gamma}{2}}.$$
This consequently implies the same uniform bound for $\zetaZm(p)$ from \eqref{def.convention.zeta0By}.	
\end{proposition}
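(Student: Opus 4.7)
The plan is to work from the representation \eqref{zeta0B}, splitting both the $q$-integration (via $\qlep$ versus $\qgep$) and the $y$-integration (near versus far from the singularity at $y=0$), and to handle the bracket $[1-A(y)]$, where $A(y)\eqdef\exp(l(1-\sqrt{y^2+1}))I_0(jy)$, by two different devices in the two $y$-regimes. After absorbing the prefactor $e^{p^0/4}$ in \eqref{zeta0B} into $e^{-l}$ (which together with $e^{-3q^0/4}$ collapses to $e^{-q^0}$) and setting $y=r/\sqrt{s}$, the task reduces to bounding
\[
\frac{1}{p^0}\int\frac{dq}{q^0}\frac{e^{-q^0}\sqrt{s}}{g}\int_0^\infty\frac{y\,dy}{\sqrt{y^2+1}}\,\mathcal{K}(y)\,[1-A(y)]
\]
by $(p^0)^{(\rho+\gamma)/2}$, where by \eqref{kernel.equiv} the combined kernel $\mathcal{K}(y)$ behaves like $y^{-1-\gamma}$ near $0$ and like $y^{-2}$ at infinity.

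The region $\qgep$ is essentially routine: extracting $e^{-q^0/2}$ from $e^{-q^0}$ yields a super-polynomial prefactor $e^{-c|p|^{1/m}}$ which beats any polynomial in $p^0$, so it suffices to bound the residual integral by some polynomial. This follows from the small-$y$ estimate $|1-A(y)|\lesssim (l+j^2)y^2$ (derived from $A(0)=1$, $A'(0)=0$, and the Taylor expansion $A(y)-1=(j^2/4-l/2)y^2+O(y^4)$), combined with the uniform bound $A(y)\le\exp(l-\sqrt{l^2-j^2})$ and the $y^{-2}$ decay of $\mathcal{K}$ at infinity.

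The real work is on $\qlep$, where $q^0\le 1+|p|^{1/m}/2\le p^0$ for $|p|$ large. I would split the $y$-integral at an appropriate threshold $y_0\sim(l+j^2)^{-1/2}$. On $[0,y_0]$ the Taylor cancellation $|1-A(y)|\lesssim(l+j^2)y^2$ reduces the integrand to $\lesssim(l+j^2)y^{1-\gamma}$, which is integrable; the $k$-variable representation obtained after \eqref{changeofv} streamlines this step because the combined kernel simplifies to $C\sigma_0(\theta_\Lambda)\,sg^{\rho+4}/(g^2+k)^2$ and the remaining integrals reduce to special cases of \eqref{int.Kgamma}. On $[y_0,\infty)$, I treat the two pieces $1$ and $A(y)$ separately: the first is controlled by the $y^{-2}$ tail of $\mathcal{K}$, and the second by the exact Bessel integrals \eqref{J2.lemma} and \eqref{k2lj.lemma}, which contribute a factor $\exp(l-\sqrt{l^2-j^2})/\sqrt{l^2-j^2}$. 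Using \eqref{l2j2size} and \eqref{p0q0.le.pq} one gets the sharper inequality $l-\sqrt{l^2-j^2}\le (p^0+q^0)/4-(p^0-q^0)/4=q^0/2$ on $\qlep$, so that $e^{-q^0}\exp(l-\sqrt{l^2-j^2})\le e^{-q^0/2}$. Collecting weights and integrating over $q$ in spherical coordinates, as in the proof of Proposition~\ref{prop.coercive}, yields the desired $(p^0)^{(\rho+\gamma)/2}$ bound.

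The hardest point is the sign change of $1-A(y)$ near its negative minimum, which prevents any crude triangle-inequality bound: one must separately track the magnitudes of $1$ and of $A(y)$ while ensuring that the positive contributions from the small-$y$ Taylor regime and from the large-$y$ exact-integral asymptotics combine to give precisely the claimed power of $p^0$. A second delicate balance is between the factor $l+j^2\sim p^0q^0$ produced by the small-$y$ cancellation and the weights $g^{\rho+4}$, $\sqrt{s}/g$, and $e^{-q^0}$ in the outer integrand; one must use the sharper estimate $g\le|p-q|\lesssim p^0$ available on $\qlep$ to convert $g^{\rho+4}$ into the correct power of $p^0$ without exceeding $(p^0)^{(\rho+\gamma)/2}$.
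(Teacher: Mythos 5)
Your overall strategy --- Taylor-expanding the full product $A(y)=\exp(l(1-\sqrt{y^2+1}))I_0(jy)$ to second order near $y=0$ and cutting at a $q$-dependent threshold $y_0\sim(l+j^2)^{-1/2}$ --- is genuinely different from the paper's. The paper instead splits the bracket algebraically, $1-e^{-\lambda}I_0=(1-e^{-\lambda})+e^{-\lambda}(1-I_0)$, giving $\zetaZ=\zeta_1+\zeta_2$ as in \eqref{zeta1B}--\eqref{zeta2B}: the exponential difference is handled by the fundamental theorem of calculus in the $k$-variable of \eqref{changeofv} (the resulting factor $k$ cancels the $k^{-1-\gamma/2}$ singularity outright, with no threshold needed), and the Bessel difference by summing the power series of $I_0-1$ term by term against $\int_0^1 y^{-1-\gamma+2M}e^{-cly^2}\,dy$ with Stirling bounds. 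Your route is viable: the estimate $|1-A(y)|\lesssim(l+j^2)y^2$ is correct for $y\le y_0$ (there $jy\le 1$ and $ly^2\le 1$, so $|A''|\lesssim l+j^2$), and the resulting factor $(l+j^2)^{\gamma/2}\lesssim(p^0q^0)^{\gamma/2}$ from \eqref{l.upper.ineq} does close the power count --- \emph{provided} one keeps the full kernel asymptotics $s^{-\gamma/2}g^{\singS+2+\gamma}y^{-2-\gamma}$ from \eqref{kernel.equiv} near $y=0$; discarding the $s^{-\gamma/2}$ as $O(1)$ loses a factor $(p^0)^{\gamma/2}$. You assert rather than carry out this assembly, and it is exactly where the sharpness lives.

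There are two concrete gaps. First, your treatment of $[y_0,\infty)$ does not cover the intermediate range $y_0\le y\le 1$: there the kernel is still $\sim y^{-1-\gamma}$, so the ``$y^{-2}$ tail'' argument for the ``$1$'' piece does not apply, and the exact integrals \eqref{J2.lemma} and \eqref{k2lj.lemma} do not control the ``$A$'' piece because their integrands carry the weights $y/\sqrt{1+y^2}$ and $y\sqrt{y^2+1}$, not $y^{-1-\gamma}$. Converting via $y^{-1-\gamma}\le(l+j^2)\,y^{1-\gamma}$ and invoking $\bar{K}_\gamma$ produces a full factor $l+j^2\approx p^0q^0$ instead of $(l+j^2)^{\gamma/2}$ and destroys the bound by a factor $(p^0)^{1-\gamma/2}$. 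What does work is \eqref{max.bound}: bound $\exp(l-l\sqrt{y^2+1}+jy)\lesssim\exp(l-\sqrt{l^2-j^2})\le e^{q^0/2}$ uniformly on $[0,1]$ and pay only $\int_{y_0}^1 y^{-1-\gamma}\,dy\approx(l+j^2)^{\gamma/2}$, which matches the small-$y$ contribution. Second, the $\qlep$/$\qgep$ splitting is unnecessary for this proposition --- the paper proves the bound with the full $q$-integral over $\rth$ --- and your proposed ``sharper estimate $g\le|p-q|\lesssim p^0$ on $\qlep$'' goes the wrong way: on that region $\sqrt{p^0q^0}\lesssim(p^0)^{1/2+1/(2m)}$ is far smaller than $p^0$, so the bounds you actually need are $g\lesssim\sqrt{p^0q^0}$ from \eqref{g.le.sqrtpq} together with $g\gtrsim|p-q|/\sqrt{p^0q^0}$ from \eqref{g.ge.lower} for the negative powers arising in the soft case.
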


For the proof, 
we decompose $\zetaZ$ from \eqref{zeta0B} as $\zetaZ=\zeta_1+\zeta_2$ where
\begin{multline}\label{zeta1B}
\zeta_1 \eqdef \frac{c'}{p^0}e^{\frac{p^0}{4}}\int_{\rth}\frac{dq}{q^0}\frac{e^{-\frac{3}{4}q^0}}{g}\int_{0}^\infty \frac{rdr}{\sqrt{r^2+s}}s_\Lambda\sigma(g_\Lambda,\theta_\Lambda)  \frac{s\Phi(g)g^4}{s_\Lambda  \Phi(g_\Lambda)g^4_\Lambda}
\\
\times
\left[\exp\left(-\frac{p^0+q^0}{4}\right) - \exp\left(-\frac{p^0+q^0}{4\sqrt{s}}\sqrt{r^2+s}\right) \right],
\end{multline}
\begin{multline}\label{zeta2B}
\zeta_2 \eqdef \frac{c'}{p^0}e^{\frac{p^0}{4}}\int_{\rth}\frac{dq}{q^0}\frac{e^{-\frac{3}{4}q^0}}{g}\int_{0}^\infty \frac{rdr}{\sqrt{r^2+s}}s_\Lambda\sigma(g_\Lambda,\theta_\Lambda)  \frac{s\Phi(g)g^4}{s_\Lambda  \Phi(g_\Lambda)g^4_\Lambda}
\\
\times
\exp\left(-\frac{p^0+q^0}{4\sqrt{s}}\sqrt{r^2+s}\right)\left[1-I_0\left(\frac{|p\times q|}{2g\sqrt{s}}r\right) \right].
\end{multline}
Clearly, $\zeta_1$ is positive.   We estimate $\zeta_1$ in Lemma \ref{lemma.zeta1} and then we will estimate $\zeta_2$ in Lemma \ref{lemma.zeta2};  Proposition \ref{prop.zeta0.asymptotic} then follows directly.  First, we have

\begin{lemma}\label{lemma.zeta1}
	Assuming either \eqref{hard} or \eqref{soft} with \eqref{angassumption}, then we have the following uniform asymptotic bound for $\zeta_1$ from \eqref{zeta1B}:
	$$\zeta_1(p)\lesssim (p^0)^{\frac{\singS+\gamma}{2}}.$$
\end{lemma}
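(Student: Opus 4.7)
The approach is to recast $\zeta_1$ via the change of variables $r\mapsto k$ from \eqref{changeofv}, exploit a clean exponential cancellation to extract an $e^{-q^0}$ decay, bound the singular $k$-integrand by elementary calculus, and then integrate over $q$.

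First I would apply \eqref{changeofv} to \eqref{zeta1B} and use $r\,dr/\sqrt{r^2+s}=2\,dk/\sqrt{s}$ together with $\sqrt{r^2+s}/\sqrt{s}=1+2k/s$. The key observation is the cancellation
\begin{equation*}
e^{p^0/4}e^{-3q^0/4}\!\left[e^{-(p^0+q^0)/4}-e^{-(p^0+q^0)\sqrt{r^2+s}/(4\sqrt{s})}\right]=e^{-q^0}\!\left[1-e^{-(p^0+q^0)k/(2s)}\right],
\end{equation*}
which produces the desired $e^{-q^0}$ weight. With $s_\Lambda=s+k$, $g_\Lambda^2=g^2+k$, $\Phi(g)=g^\rho$ (see \eqref{rho.def}), and the support convention $\cos\theta_\Lambda\ge 0$ restricting $k\in[0,g^2]$, the integrand reduces to
\begin{equation*}
\zeta_1(p)=\frac{2c'}{p^0}\int_{\rth}\frac{dq\,e^{-q^0}}{q^0\,g}\int_0^{g^2}\frac{\sqrt{s}\,g^{4+\rho}\sigma_0(\theta_\Lambda)}{(g^2+k)^2}\Bigl[1-e^{-(p^0+q^0)k/(2s)}\Bigr]dk.
\end{equation*}

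Next I would estimate the inner $dk$-integral. By \eqref{angassumption} and $\sin^2(\theta_\Lambda/2)=k/(g^2+k)$ from \eqref{tl}, $\sigma_0(\theta_\Lambda)\lesssim ((g^2+k)/k)^{1+\gamma/2}$; on $[0,g^2]$ the factor $g^2+k\approx g^2$ collapses nicely. Substituting $u=ak$ with $a=(p^0+q^0)/(2s)$ and using $1-e^{-u}\le\min(u,1)$ yields
\begin{equation*}
\int_0^{g^2}\!k^{-1-\gamma/2}[1-e^{-ak}]dk\;\lesssim\;a^{\gamma/2}\!\int_0^\infty\!u^{-1-\gamma/2}[1-e^{-u}]du\;\lesssim\;\left(\tfrac{p^0+q^0}{s}\right)^{\gamma/2},
\end{equation*}
since the $u$-integral converges for $\gamma\in(0,2)$. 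Hence the full inner double integral is bounded by $g^{2+\rho+\gamma}s^{1/2-\gamma/2}(p^0+q^0)^{\gamma/2}$.

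Finally, I am reduced to estimating
\begin{equation*}
\zeta_1(p)\;\lesssim\;\frac{1}{p^0}\int_{\rth}\frac{dq\,e^{-q^0}}{q^0}\,g^{1+\rho+\gamma}\,s^{1/2-\gamma/2}\,(p^0+q^0)^{\gamma/2},
\end{equation*}
using the pointwise bounds from Lemma \ref{lem:useful.ests} ($g\lesssim\sqrt{p^0q^0}$, $s\lesssim p^0q^0$, $p^0+q^0\lesssim p^0q^0$). The heuristic is transparent: for $q^0\sim 1$ and $|p|$ large one has $g\approx\sqrt{p^0}$, $s\approx p^0$, and the integrand scales as $(p^0)^{1+(\rho+\gamma)/2}e^{-q^0}/q^0$, which the prefactor $1/p^0$ cuts down to precisely $(p^0)^{(\rho+\gamma)/2}$; larger $q^0$ is suppressed by the super-exponential factor $e^{-q^0}$. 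The main obstacle is the case analysis needed to make this rigorous: when $1+\rho+\gamma<0$ (which can occur in the soft regime \eqref{soft} with $b>1+\gamma$) the factor $g^{1+\rho+\gamma}$ blows up along the diagonal $p\approx q$, and one must invoke the lower bound $g\gtrsim |p-q|/\sqrt{p^0q^0}$ from \eqref{g.ge.lower} together with a spherical decomposition of $q$ around $p$ — exactly as in the proof of Proposition~\ref{prop.coercive} — to integrate the singularity; analogously the sign of $\tfrac12-\tfrac{\gamma}{2}$ dictates whether to bound $s^{1/2-\gamma/2}$ by $s\lesssim p^0q^0$ (for $\gamma\le 1$) or by $s\gtrsim g^2$ (for $\gamma\ge 1$). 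Once these cases are arranged, the exponential $e^{-q^0}$ absorbs any polynomial in $q^0$ and the sharp bound $(p^0)^{(\rho+\gamma)/2}$ follows.
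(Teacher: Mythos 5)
Your proposal is correct and follows essentially the same route as the paper: the same change of variables $r\mapsto k$ from \eqref{changeofv}, the same exponential cancellation producing the $e^{-q^0}$ weight, the same kernel asymptotics $\sigma_0(\theta_\Lambda)\approx((g^2+k)/k)^{1+\gamma/2}$, and the same final $q$-integration via Lemma \ref{lem:useful.ests} with the hard/soft case split. The only (harmless) difference is in the $dk$-integral: you restrict to $k\le g^2$ via the support convention and use the scaling $u=ak$ with $1-e^{-u}\le\min(u,1)$, whereas the paper keeps $k\in(0,\infty)$, splits at $k=4$, and uses the fundamental theorem of calculus plus cruder bounds to land on the same power.
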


\begin{proof}The change of variables \eqref{changeofv} on the representation \eqref{zeta1B} yields that
\begin{multline*}\zeta_1 \eqdef \frac{c'}{p^0}e^{\frac{p^0}{4}}\int_{\rth}\frac{dq}{q^0}\frac{e^{-\frac{3}{4}q^0}}{g}\int_{0}^\infty \frac{2dk}{\sqrt{s}}s_\Lambda\sigma(g_\Lambda,\theta_\Lambda)  \frac{s\Phi(g)g^4}{s_\Lambda  \Phi(g_\Lambda)g^4_\Lambda}
\\
\times
\Big[\exp\left(-\frac{p^0+q^0}{4}\right) - \exp\left(-\frac{p^0+q^0}{4}\left(1+\frac{2k}{s}\right)\right) \Big]\\
=\frac{c'}{p^0}\int_{\rth}\frac{dq}{q^0}\frac{e^{-q^0}}{g}\int_{0}^\infty \frac{2dk}{\sqrt{s}}s_\Lambda\sigma(g_\Lambda,\theta_\Lambda)  \frac{s\Phi(g)g^4}{s_\Lambda  \Phi(g_\Lambda)g^4_\Lambda}
\Big[1- \exp\left(-\frac{(p^0+q^0)k}{2s}\right) \Big]\\
=\frac{c_4}{p^0}\int_{\rth}\frac{dq}{q^0}e^{-q^0}\sqrt{s}\Phi(g)g^3\int_{0}^\infty dk \frac{\sigma_0(\cos\theta_{\Lambda})}{ g^4_\Lambda}
\Big[1- \exp\left(-\frac{(p^0+q^0)k}{2s}\right) \Big].
\end{multline*}
Here $c_4 = 2c'$.
	We start by showing the upper-bound estimates of $\zeta_1$. By the fundamental theorem of calculus, we have
	\begin{multline}\label{zetanew}
	\zeta_1=\frac{c_4}{p^0}\int_{\rth}\frac{dq}{q^0}e^{-q^0}\sqrt{s}\Phi(g)g^3\int_{0}^\infty dk ~\frac{\sigma_0(\cos\theta_{\Lambda})}{ g^4_\Lambda}
	\Big[1- \exp\left(-\frac{(p^0+q^0)k}{2s}\right) \Big]\\
	=\frac{c_4}{p^0}\int_{\rth}\frac{dq}{q^0}e^{-q^0}\sqrt{s}\Phi(g)g^3\int_{0}^\infty dk ~\frac{\sigma_0(\cos\theta_{\Lambda})}{ g^4_\Lambda}
	\\\times	\int_0^1d\vartheta\ \exp\left(-\frac{(p^0+q^0)k}{2s}\vartheta\right)\frac{(p^0+q^0)k}{2s}.
	\end{multline}
	Note that using \eqref{angassumption}, \eqref{hard}, \eqref{soft},  \eqref{rho.def}, \eqref{gl}, and \eqref{tl}, we have
\begin{equation} \label{new.ang.est.zeta.doit}
\Phi(g)\approx  g^{\singS}
\ \text{and} \
\sigma_0(\cos\theta_{\Lambda})\approx\left(\frac{k}{k+g^2}\right)^{-1-\gamma/2}\approx g_\Lambda^{2+\gamma} k^{-1-\gamma/2}.
\end{equation}
We will use this equivalence in the following developments.

We split into two cases: $k\leq 4$ and $k>4$.  First consider $k\leq 4$.
We use $\exp\left(-\frac{(p^0+q^0)k}{2s}\vartheta\right)\leq 1$ and $g\leq g_\Lambda = (g^2 + k)^{1/2}$ from \eqref{gl}, then when $k\leq 4$ we have
	\begin{multline}\label{zeta.use.hard.too}
	\zeta_1(p)\lesssim \frac{1}{p^0}\int_{\rth}\frac{dq}{q^0}e^{-q^0}\sqrt{s}g^{3}\Phi(g)\int_{0}^{4} dk \frac{g_\Lambda^{2+\gamma}}{ g^4_\Lambda} k^{-1-\frac{\gamma}{2}}
	\frac{(p^0+q^0)k}{2s}
	\\
	\lesssim \int_{\rth}dq\ e^{-q^0}\Phi(g)s^{\frac{\gamma}{2}}\int_{0}^{4} dk \ k^{-\frac{\gamma}{2}}.
	\end{multline}
	Here we used $g_\Lambda \le \sqrt{s}$ when $k\leq 4$.
	Since $\gamma\in(0,2)$, the integral converges.
	
	Now we use $g\lesssim \sqrt{p^0q^0}$ from \eqref{s.ge.g2} and \eqref{s.le.pq} in the hard interaction \eqref{hard} case.  Alternatively we will use 
	$g\geq \frac{|p-q|}{\sqrt{p^0q^0}}$ from \eqref{g.ge.lower} in the soft interaction \eqref{soft} case, and $s\lesssim p^0q^0$  from \eqref{s.le.pq}.  Then on $k\le 4$ we further have
	\begin{equation}\notag
	\zeta_1(p)\lesssim \int_{\rth}dq\ e^{-q^0}(p^0q^0)^{\frac{\singA+\gamma}{2}}
	\lesssim
	(p^0)^{\frac{\singA+\gamma}{2}},
	\end{equation}for the hard interactions, and
	\begin{equation}\notag
	\zeta_1(p)\lesssim \int_{\rth}dq\ e^{-q^0}\left(\frac{|p-q|}{\sqrt{p^0q^0}}\right)^{-\singB}(p^0q^0)^{\frac{\gamma}{2}}
	\lesssim
	(p^0)^{\frac{-\singB+\gamma}{2}},
	\end{equation}
	for the soft interactions.

	On the other hand, when $k>4$, we still have \eqref{new.ang.est.zeta.doit} and \eqref{zetanew}.
Hence
\begin{multline}\label{k.ge.four.est.zeta}
	\zeta_1(p)
	\lesssim
	\frac{1}{p^0}\int_{\rth}\frac{dq}{q^0}e^{-q^0}\sqrt{s}g^{3}\Phi(g)\int_{4}^\infty dk \frac{\Big[1- \exp\left(-\frac{(p^0+q^0)k}{2s}\right) \Big]}{ k^{1+\gamma/2}(k+g^2)^{1-\gamma/2}}
\\
	\lesssim
	\frac{1}{p^0}\int_{\rth}\frac{dq}{q^0}e^{-q^0}\sqrt{s}g^{1+\gamma}\Phi(g)\int_{4}^\infty dk ~ k^{-1-\gamma/2}  \Big[1- \exp\left(-\frac{(p^0+q^0)k}{2s}\right) \Big]
\\
\lesssim
	\int_{\rth} dq ~ e^{-q^0}g^{\gamma}\Phi(g)\int_{4}^\infty dk ~ k^{-1-\gamma/2}
	\Big[1- \exp\left(-\frac{(p^0+q^0)k}{2s}\right) \Big]
\\
\lesssim
	\int_{\rth} dq ~ e^{-q^0}g^{\gamma}\Phi(g)\int_{4}^\infty dk ~ k^{-1-\gamma/2}.
\end{multline}
Above we used $g \lesssim \sqrt{s} \lesssim \sqrt{\pZ \qZ}$ from \eqref{s.ge.g2}-\eqref{s.le.pq} and $\Big[1- \exp\left(-\frac{(p^0+q^0)k}{2s}\right) \Big] \lesssim 1$.

 Then, also using $g\lesssim \sqrt{p^0q^0}$ for hard interactions \eqref{hard} and \eqref{g.ge.lower} for the soft interactions \eqref{soft}, when $k\geq 4$, we have
	\begin{multline*}
	\zeta_1(p)
	\lesssim
	\int_{\rth} dq ~ e^{-q^0}g^{\singA+\gamma}\int_{4}^\infty dk ~ k^{-1-\gamma/2}
		\lesssim
	\int_{\rth} dq ~ e^{-q^0}g^{\singA+\gamma}
\\
\lesssim
	\int_{\rth} dq ~ e^{-q^0}(p^0q^0)^{\frac{\singA+\gamma}{2}}
	\lesssim
	(p^0)^{\frac{\singA+\gamma}{2}},
	\end{multline*}in the hard interaction case, and
	\begin{multline*}
	\zeta_1(p)
	\lesssim
	\int_{\rth} dq ~ e^{-q^0}g^{-\singB+\gamma}\int_{4}^\infty dk ~ k^{-1-\gamma/2}
		\lesssim
	\int_{\rth} dq ~ e^{-q^0}g^{-\singB+\gamma}
\\
\lesssim
	\int_{\rth} dq ~ e^{-q^0}\left(\frac{|p-q|}{\sqrt{p^0q^0}}\right)^{-\singB+\gamma}
	\lesssim
	(p^0)^{\frac{-\singB+\gamma}{2}},
	\end{multline*}in the soft interaction case.
	This completes the upper-bound estimate of $\zeta_1$.  
\end{proof}

On the other hand, we have the following upper-bound estimate for $\zeta_2$:
\begin{lemma}\label{lemma.zeta2}
	Suppose $\gamma \in (0,2)$ in \eqref{angassumption}.  Then for both hard \eqref{hard} and soft \eqref{soft} interactions with \eqref{rho.def} we have the following uniform upper bound for \eqref{zeta2B} when $|p|\ge 1$:
	$$|\zeta_2(p)|\lesssim (p^0)^{\frac{\singS+\gamma}{2}}.$$
	\end{lemma}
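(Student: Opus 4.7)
The plan is as follows. First I would apply the change of variables $r = y\sqrt{s}$ to rewrite $\zeta_2$ in the $y$-form consistent with \eqref{zeta0By}, so that the inner factor becomes $\exp(l-l\sqrt{y^2+1})[1 - I_0(jy)]$ and the outer prefactor is $e^{-q^0}$. Since $I_0(jy) \ge 1$, the integrand is non-positive, and $|\zeta_2|$ equals the corresponding integral with $I_0(jy) - 1 \ge 0$ in place of $1 - I_0(jy)$.

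Next I would split the $y$-integral into $y \le 1$ and $y \ge 1$. On $y \le 1$, the angular singularity $\sigma_0(\theta_\Lambda) \approx y^{-2-\gamma}$ of \eqref{ang.equiv} is non-integrable on its own, but the elementary inequality $I_0(jy) - 1 \le (jy)^2 I_0(jy)/4$, which follows from grouping the positive Taylor series for $I_0$, supplies exactly the $y^2$ cancellation that is needed. Combined with \eqref{kernel.equiv} and the bound $(sy^2+g^2)^{\gamma/2-1} \lesssim g^{\gamma-2}$ (valid since $\gamma < 2$), this region's contribution reduces to a multiple of $j^2 e^l \bar K_\gamma(l,j)$ for $\bar K_\gamma$ from \eqref{int.Kgamma}, which by \eqref{smally.lemma} is bounded by $j^2 e^l \exp(-\sqrt{l^2-j^2})$. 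On $y \ge 1$ the angular function $\sigma_0(\theta_\Lambda)$ is bounded and $g_\Lambda^2 \gtrsim s\sqrt{y^2+1}$ from \eqref{ineq.gL.here} controls the remaining kernel factors; the crude bound $|I_0(jy)-1| \le I_0(jy)$ then yields a multiple of $s\Phi(g) e^l J_2(l,j)$, equal by \eqref{J2.lemma} to $s\Phi(g)(l^2-j^2)^{-1/2} e^l \exp(-\sqrt{l^2-j^2})$.

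The critical step is the combination of the $e^{-q^0}$ prefactor with the factor $e^l \exp(-\sqrt{l^2-j^2})$ arising from the $y$-integration. Using \eqref{l2j2size}, namely $\sqrt{l^2-j^2} \ge |p-q|/4$, together with \eqref{p0q0.le.pq}, $|p-q| \ge |p^0-q^0|$, a direct case analysis on $p^0 \ge q^0$ versus $p^0 < q^0$ shows that $l - \sqrt{l^2-j^2} \le (p^0+q^0-|p-q|)/4 \le \min(p^0,q^0)/2$, so that uniformly
\[
-q^0 + l - \sqrt{l^2-j^2} \le -q^0/2.
\]
Hence $e^{-q^0} e^{l - \sqrt{l^2-j^2}} \le e^{-q^0/2}$, the residual exponential decay that makes the $dq$-integration absolutely convergent.

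The remaining algebraic factor, a rational function of $p^0, q^0, s, g, j, l^2-j^2$, is then dominated via \eqref{g.le.sqrtpq}, \eqref{s.le.pq}, \eqref{l.upper.ineq} in the hard case \eqref{hard} and via \eqref{g.ge.lower} in the soft case \eqref{soft}, yielding the desired polynomial factor $(p^0)^{(\rho+\gamma)/2}$ after integration of a polynomial in $q^0$ against $e^{-q^0/2}$. The main obstacle is precisely the exponential bookkeeping in the previous paragraph: naive estimates of $l-\sqrt{l^2-j^2}$ (such as the trivial $l-\sqrt{l^2-j^2} \le l$) do not preserve enough of the $e^{-q^0}$ decay to give a convergent $dq$-integral without introducing spurious exponential growth in $p^0$, and it is only by using \eqref{l2j2size} and \eqref{p0q0.le.pq} together in the case analysis that one extracts the residual $e^{-q^0/2}$ needed to close the estimate.
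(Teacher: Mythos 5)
Your overall architecture coincides with the paper's: the same change of variables to the $y$-form, the observation that $I_0\ge 1$ fixes the sign, the split at $y=1$, the treatment of $y\ge 1$ via $J_2(l,j)$ from \eqref{J2.lemma}, and the exponential bookkeeping $l-\sqrt{l^2-j^2}\le \tfrac{1}{4}(p^0+q^0-|p-q|)\le \tfrac{q^0}{2}$ (which is exactly \eqref{l2j2size} combined with \eqref{exponential.bound.1}). The large-$y$ half of your argument is correct and matches \eqref{zeta2firstcase.hard}--\eqref{zeta2firstcase.soft}.

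The gap is in the region $0<y<1$. The inequality $I_0(jy)-1\le \tfrac{(jy)^2}{4}I_0(jy)$ is true, but after absorbing the $y^2$ into the angular singularity you bound the resulting integral $e^{l}\bar K_\gamma(l,j)$ only by its pointwise maximum via \eqref{smally.lemma}, i.e.\ by $e^{l-\sqrt{l^2-j^2}}$ with no power-law decay in $l$. You are then left with an overall factor $j^2$, and by \eqref{l.upper.ineq} the best available bound is $j^2\lesssim p^0q^0$ (this is sharp when $|p\times q|$ dominates). Tracking the remaining algebraic factors in \eqref{zeta2abs2} then produces $(p^0)^{\frac{\singS+2}{2}}$ rather than $(p^0)^{\frac{\singS+\gamma}{2}}$; since $\gamma\in(0,2)$ may be strictly less than $2$, this does not prove the lemma. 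The sharp estimate requires retaining the decay $\int_0^1 y^{1-\gamma+2(M-1)}e^{-cly^2}\,dy\lesssim l^{\gamma/2-M}$ coming from the Gaussian-type localization of the $y$-integral near $y\sim l^{-1/2}$. The paper does this term by term in the Taylor series of $I_0$: it shows $Y_M\lesssim 3^M\tfrac{M!}{\sqrt M}\,l^{\gamma/2-M}$ in \eqref{yM.bound}, uses $j^{2M}l^{-M}\le (q^0)^M$ from \eqref{j2l}, and resums to obtain $l^{\gamma/2}e^{\frac34 q^0}$, whose $l^{\gamma/2}\lesssim (p^0q^0)^{\gamma/2}$ is precisely the factor your argument replaces by the too-large $j^2$. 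Your approach could be repaired by keeping $M=1$ explicit, i.e.\ bounding $\int_0^1 y^{1-\gamma}e^{-cly^2+jy}\,dy$ with the completed square $e^{j^2/(4cl)}\,l^{\gamma/2-1}$ and then using $j^2/l\le q^0$, but as written the step invoking \eqref{smally.lemma} loses the needed factor $(p^0)^{1-\gamma/2}$.
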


	\begin{proof}
	We use the change of variables $r\mapsto y\eqdef \frac{r}{\sqrt{s}}$ on \eqref{zeta2B}. This yields
	\begin{multline}\notag
\zeta_2 \eqdef \frac{c'}{p^0}e^{\frac{p^0}{4}}\int_{\rth}\frac{dq}{q^0}\frac{e^{-\frac{3}{4}q^0}}{g}\int_{0}^\infty \frac{\sqrt{s}ydy}{\sqrt{y^2+1}}s_\Lambda\sigma(g_\Lambda,\theta_\Lambda)  \frac{s\Phi(g)g^4}{s_\Lambda  \Phi(g_\Lambda)g^4_\Lambda}
\\
\times
\exp\left(-\frac{p^0+q^0}{4}\sqrt{y^2+1}\right)\left[1-I_0\left(\frac{|p\times q|}{2g}y\right) \right].
\end{multline}
Recall \eqref{lj}.   Note that $\sigma(g_\Lambda,\theta_\Lambda)=\Phi(g_\Lambda)\sigma_0(\theta_\Lambda)\ge 0.$ Then we have
 \begin{multline}\label{zeta2New}
 \zeta_2 \eqdef \frac{c'}{p^0}e^{\frac{p^0}{4}}\int_{\rth}\frac{dq}{q^0}\frac{e^{-\frac{3}{4}q^0}}{g}\int_{0}^\infty \frac{\sqrt{s}ydy}{\sqrt{y^2+1}}s\Phi(g)\sigma_0(\theta_\Lambda)  \frac{g^4}{g^4_\Lambda}
 \\
 \times
 \exp\left(-l\sqrt{y^2+1}\right)\left[1-I_0(jy) \right].
 \end{multline} 
 Note that $I_0 \ge 1$ so that $\zeta_2 \le 0$.
 By \eqref{ang.equiv}, using $g^2\leq s$ from \eqref{s.ge.g2} we have
$$
	\sigma_0(\theta_\Lambda)
	\lesssim (y^2+\sqrt{y^2+1})^{1+\gamma/2}y^{-2-\gamma},
$$
Plugging this into \eqref{zeta2New}, we have
	 \begin{multline}\label{zeta2abs}
	|\zeta_2|\lesssim  \frac{1}{p^0}e^{\frac{p^0}{4}}\int_{\rth}\frac{dq}{q^0}\frac{e^{-\frac{3}{4}q^0}}{g}s\sqrt{s}\Phi(g)\int_{0}^\infty \frac{y^{-1-\gamma}dy}{\sqrt{y^2+1}}(y^2+\sqrt{y^2+1})^{1+\gamma/2}
	\\
	\times
	\exp\left(-l\sqrt{y^2+1}\right)\left[I_0(jy)-1 \right],
	\end{multline} where we also used $\frac{g^4}{g^4_\Lambda}\leq 1$ from \eqref{g2y.variable}. Also note that
	\begin{multline*}	\exp\left(-l\sqrt{y^2+1}\right)=\exp(-l)	\exp\left(-l(\sqrt{y^2+1}-1)\right)\\=e^{\frac{-p^0-q^0}{4}}\exp\left(-l(\sqrt{y^2+1}-1)\right).\end{multline*} Plugging this into \eqref{zeta2abs}, we have
		 \begin{equation}\label{zeta2abs2}
	|\zeta_2|\lesssim  \frac{1}{p^0}\int_{\rth}\frac{dq}{q^0}\frac{e^{-q^0}}{g}s\sqrt{s}\Phi(g)Y(p,q),
		 \end{equation}
	where we define
	\begin{multline}\label{eq.Y}Y(p,q)\eqdef \int_{0}^\infty \frac{y^{-1-\gamma}dy}{\sqrt{y^2+1}}(y^2+\sqrt{y^2+1})^{1+\gamma/2}
	\\\times\exp\left(-l(\sqrt{y^2+1}-1)\right)\left[I_0(jy)-1 \right].\end{multline}
	For the upper-bound estimate of $Y(p,q)$ we split the region $[0,\infty)$ into two: 
	\begin{equation}\notag
	Y(p,q) = \tilde{Y}_1(p,q) + \tilde{Y}_2(p,q),
	\end{equation}   
	where $\tilde{Y}_1(p,q)$ is the integral in \eqref{eq.Y} restricted to the integration region $y\ge 1$ and then $\tilde{Y}_2(p,q)$ is the expression in \eqref{eq.Y} on the integration region $0<y<1$.

First we consider the case $\tilde{Y}_1(p,q)$  that $y\geq 1$. When $y\ge 1$, we have
$$
	\frac{y^{-1-\gamma}}{\sqrt{y^2+1}}(y^2+\sqrt{y^2+1})^{1+\gamma/2}
		\lesssim
	\frac{y^{-1-\gamma}}{\sqrt{y^2+1}}(y^2)^{1+\gamma/2}
	\lesssim\frac{y}{\sqrt{y^2+1}}.
$$ 
Therefore, on the region $y\geq 1$, using \eqref{J2.special} we have
$$
\tilde{Y}_1(p,q)\lesssim \exp(l) \int_1^\infty dy\  \frac{y}{\sqrt{y^2+1}}\exp(-l\sqrt{y^2+1})I_0(jy)
\lesssim \exp(l) J_2(l,j).
$$ 
By \eqref{J2.lemma} we then have
$$
\tilde{Y}_1(p,q)
\lesssim \exp(l) \frac{\exp(-\sqrt{l^2-j^2})}{\sqrt{l^2-j^2}}.
$$
Since $p^0-q^0\leq |p-q|$ from \eqref{p0q0.le.pq}, we have 
	\begin{equation}\label{exponential.bound.1}
	    \exp\left(\frac{p^0-q^0-|p-q|}{4}\right)\leq 1.
	\end{equation}
Thus, using \eqref{g.le.upper}, \eqref{l2j2}, \eqref{l2j2size} and \eqref{exponential.bound.1} we have 
\begin{multline*}
\tilde{Y}_1(p,q)
\lesssim \exp\left(\frac{p^0+q^0}{4}-\frac{\sqrt{s}}{4g}|p-q|\right)\frac{4g}{\sqrt{s}|p-q|}\\
	\lesssim \exp\left(\frac{q^0}{2}\right)\exp\left(\frac{p^0-q^0}{4}-\frac{|p-q|}{4}\right)\frac{4}{\sqrt{s}}\lesssim \frac{\exp\left(\frac{q^0}{2}\right)}{\sqrt{s}}.
\end{multline*} 
Now we will use $\Phi(g)\approx g^{\singS}$ from \eqref{hard}-\eqref{rho.def}.  In the hard interaction case \eqref{hard} we use \eqref{g.ge.lower} and \eqref{g.le.sqrtpq} in \eqref{zeta2abs2} to conclude that
	\begin{equation}\label{zeta2firstcase.hard}
	\left[\zeta_2\right]_{y \ge 1}\lesssim  \int_{\rth}dq\frac{\exp\left(\frac{-q^0}{2}\right)}{|p-q|}(p^0q^0)^{\frac{\singA+1}{2}}
	\lesssim(p^0)^{\frac{\singA}{2}-\frac{1}{2}}.
	\end{equation}
Then in the soft interaction case \eqref{soft}, using $b<2$, we use \eqref{s.le.pq} and \eqref{g.ge.lower} to obtain
	\begin{equation}\label{zeta2firstcase.soft}
	\left[\zeta_2\right]_{y \ge 1}\lesssim \int_{\rth}dq\frac{\exp\left(\frac{-q^0}{2}\right)}{|p-q|^{1+\singB}}(p^0q^0)^{\frac{\singB+1}{2}}
	\lesssim(p^0)^{-\frac{\singB}{2}-\frac{1}{2}}.
	\end{equation}
	Here $\left[\zeta_2\right]_{y \ge 1}$ is $\zeta_2$ restricted to the integration region ${y \ge 1}$ using the convention \eqref{convention}.
	This completes the proof for the upper bound of $\zeta_2$ when  ${y \ge 1}$.

Alternatively, using \eqref{eq.Y} we will show that $|\zeta_2|$ on $0<y<1$ is bounded uniformly from above by $(p^0)^{\frac{\singS}{2}+\frac{\gamma}{2}}.$ 
We prove this using the known Taylor expansion of the modified Bessel function of the first kind $I_0$ \cite{Gradshteyn:1702455}  as follows:
	$$I_0(jy)=\sum_{M=0}^{\infty}\frac{1}{(M!)^2}\left(\frac{jy}{2}\right)^{2M}.$$
	Now, since $y<1$, recalling \eqref{eq.Y} we have
	$$\frac{y^{-1-\gamma}}{\sqrt{y^2+1}}(y^2+\sqrt{y^2+1})^{1+\gamma/2}\lesssim y^{-1-\gamma},$$ and
	$$\exp\left(-l(\sqrt{y^2+1}-1)\right)=\exp\left(-l\frac{y^2}{\sqrt{y^2+1}+1}\right)\leq \exp\left(-l\frac{y^2}{\sqrt{2}+1}\right).$$
	Therefore, by \eqref{eq.Y}, we have
	\begin{equation*}
	\begin{split}
	\tilde{Y}_2(p,q) &\lesssim  \int_0^1 dy \ y^{-1-\gamma}
	\exp\left(-l\frac{y^2}{\sqrt{2}+1}\right)\sum_{M=1}^{\infty}\frac{1}{(M!)^2}\left(\frac{jy}{2}\right)^{2M}\\
	&\lesssim \sum_{M=1}^{\infty} \frac{1}{(M!)^2}(j/2)^{2M} \int_0^1 dy \ y^{-1-\gamma+2M}
	\exp\left(-cly^2\right),
	\end{split}
	\end{equation*} 
	where we define 
\begin{equation}
    	\label{c.def}
	c\eqdef \frac{1}{1+\sqrt{2}}.
\end{equation}
	For $M\ge 1$, we further define 
	$$
	Y_M \eqdef \int_0^1dy \ y^{-1-\gamma+2M}
	\exp\left(-cly^2\right).
	$$
	Here we take a change of variables $y\mapsto z=ly^2$ with $dz=2lydy$ and obtain
	\begin{multline}\notag
	Y_M\leq  \frac{l^{\frac{\gamma}{2}-M}}{2} \int_0^l dz \ z^{-1-\gamma/2+M}\exp\left(-cz\right) \\\leq\frac{l^{\frac{\gamma}{2}-M}}{2} \int_0^\infty dz \ z^{-1-\gamma/2+M}    	\exp\left(-cz\right)\\
	\leq\frac{l^{\frac{\gamma}{2}-M}}{2}3^{M-1} \sup_{z\in[0,\infty)} \left\{\left(\frac{z}{3}\right)^{M-1}    	\exp\left(-\frac{z}{3}\right)\right\}\int_0^\infty dz \ z^{-\gamma/2}    	\exp\left(-(c-1/3)z\right)\\
		\leq C_1 3^{M}\sup_{z\in[0,\infty)} \left\{\left(\frac{z}{3}\right)^{M-1}    	\exp\left(-\frac{z}{3}\right)\right\}l^{\frac{\gamma}{2}-M}, \end{multline}
where the constant $C_1$ is uniformly bounded since $\gamma \in(0,2)$ as
\begin{equation} \notag
    C_1 \eqdef \frac{1}{6} \int_0^\infty dz \ z^{-\gamma/2}    	\exp\left(-(c-1/3)z\right) < \infty.
\end{equation}
This holds because $c> \frac{1}{3}$ from \eqref{c.def}.  We use the Stirling formula error bounds to obtain 
	$$\sup_{z\in[0,\infty)} \left\{\left(\frac{z}{3}\right)^{M-1}    	\exp\left(-\frac{z}{3}\right)\right\}
	\leq
	\frac{1}{\sqrt{2\pi}}
	\frac{(M-1)!}{\sqrt{M-1}}
		\leq
	\frac{1}{\sqrt{4\pi}}
	\frac{M!}{\sqrt{M}},\text{ if }M\ge 2.
	$$
	Alternatively if $M=1$ we have the bound 
		$$\sup_{z\in[0,\infty)} \left\{    	\exp\left(-\frac{z}{3}\right)\right\}
\leq 1, \text{ if }M=1.$$
	Therefore we have the general bound
	\begin{equation}\label{yM.bound}
	    Y_M \leq C_1 3^M \frac{M!}{\sqrt{M}} l^{\frac{\gamma}{2}-M}, \quad M\ge 1.
	\end{equation}
We will use this bound to estimate $\left[\zeta_2\right]_{0<y<1}$ using the convention \eqref{convention}.

First we notice that using \eqref{lj} we have
	\begin{equation}\label{YM2spc}
	j^{2M} l^{\frac{\gamma}{2}-M} \leq (q^0)^{M}l^{\frac{\gamma}{2}},\end{equation} 
	where to prove \eqref{YM2spc} we used $j^2/l\leq q^0$ which follows from \eqref{g.ge.2lower} as 
	\begin{equation}\label{j2l}\frac{j^2}{l}=\frac{|p\times q|^2}{g^2(p^0+q^0)}\leq \frac{p^0q^0}{p^0+q^0}\leq q^0.
	\end{equation} 
	Now we plug \eqref{yM.bound} and \eqref{YM2spc} into \eqref{zeta2abs2} with $\tilde{Y}_2(p,q)$, to obtain
	\begin{multline*}
	\left[\zeta_2\right]_{0<y<1}
	\lesssim  \frac{1}{p^0}\int_{\rth}\frac{dq}{q^0}\frac{e^{-q^0}}{g}s\sqrt{s}\Phi(g)l^{\frac{\gamma}{2}}\sum_{M=1}^\infty \frac{1}{M!\sqrt{M}}\left(\frac{3}{4}\right)^{M}(q^0)^M
	\\
\lesssim 	 \frac{1}{p^0}\int_{\rth}\frac{dq}{q^0}\frac{e^{-q^0}}{g}s\sqrt{s}\Phi(g)l^{\frac{\gamma}{2}}\exp\left({\frac{3}{4}q^0}\right).
\end{multline*} 
We use $\Phi(g)\approx g^{\singS}$ with $-2 <\rho$ from \eqref{rho.def}.  
In the hard interaction case \eqref{hard}, we will use   \eqref{l.upper.ineq}, \eqref{g.ge.lower} and \eqref{g.le.sqrtpq} to conclude that
	\begin{equation}\label{zeta2secondcase.hard}
	\left[\zeta_2\right]_{0<y<1}\lesssim \int_{\rth}\frac{dq\ e^{-\frac{q^0}{4}}}{|p-q|}(p^0q^0)^{\frac{1}{2}+\frac{\gamma}{2} +\frac{1+\singA}{2}}\lesssim (p^0)^{\frac{\singA}{2}+\frac{\gamma}{2}}.
	\end{equation}
And in the soft interaction case \eqref{soft} we will use 	\eqref{g.ge.lower} and \eqref{s.le.pq} to obtain
	\begin{equation}\label{zeta2secondcase.soft}
	\left[\zeta_2\right]_{0<y<1}\lesssim \int_{\rth}dq\frac{e^{-\frac{q^0}{4}}}{|p-q|^{1+\singB}}(p^0q^0)^{\frac{1}{2}+\frac{\gamma}{2} +\frac{1+\singB}{2}}\lesssim (p^0)^{-\frac{\singB}{2}+\frac{\gamma}{2}},
	\end{equation}
where we recall that $1+\singB<3.$	This proves that $\left[\zeta_2\right]_{0<y<1}$ has the leading order upper bound.
\end{proof}

Thus we obtain Proposition \ref{prop.zeta0.asymptotic} by combining Lemmas \ref{lemma.zeta1} and \ref{lemma.zeta2}. In the next section, we will prove that the remainder terms  $\zetaL$ from \eqref{zetaLBy}, and $\zetaTLm$ from \eqref{def.zetaMdef}  have lower order upper bounds.  We will also prove that $\zetaTone(p)$ from \eqref{def.zetatilde1} has a lower order upper bound in Proposition \ref{prop.tildezeta.upper.qgep}.

\section{Lower order upper bound estimates}\label{sec:low order zeta1}

In this section, we study the upper bound estimates of $\zetaL$ from \eqref{zetaLBy}, $\zetaTLm$ from \eqref{def.zetaMdef} and $\zetaTone(p)$ from \eqref{def.zetatilde1}, which together form part of $\zeta_{\mathcal{K}}$ in \eqref{zetaK.def}.  Our goal will be to prove that $|\zetaL(p)|$, $|\zetaTLm(p)|$, and $|\zetaTone(p)|$ have lower order upper bounds.

\subsection{Lower order upper bound for $\zetaL(p)$} For the proof of the lower order upper bound of $|\zetaL|$ we will use the representation in \eqref{zetaLBy}. We have the following uniform asymptotic bound:

\begin{proposition}\label{prop.zetaL.asymptotic}
	Suppose $\gamma \in (0,2)$  in \eqref{angassumption}. Then for both hard \eqref{hard} and soft \eqref{soft} interactions, for \eqref{zetaLBy} when $|p|\ge 1$, we have
	$$|\zetaL(p)| \lesssim (p^0)^{\frac{\singS}{2}}.$$
This bound then automatically also holds for $|\zetaLm(p)|$ from \eqref{def.zetaMdef}.
\end{proposition}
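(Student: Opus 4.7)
The key structural observation is that in the integrand of \eqref{zetaLBy}, the extra factor
$$D(y) \eqdef \frac{s\Phi(g)g^4}{s_\Lambda\Phi(g_\Lambda)g^4_\Lambda}-1$$
vanishes at $y=0$, since there $g_\Lambda=g$ and $s_\Lambda=s$. Using $s_\Lambda=\tfrac{s(\sqrt{y^2+1}+1)}{2}$ from \eqref{glambda.calc} and $g^2_\Lambda=g^2+\tfrac{s}{2}(\sqrt{y^2+1}-1)$ from \eqref{g2y.variable}, together with $\Phi(g)\approx g^\rho$ as in \eqref{rho.def}, one computes
$$D(y)=\frac{2}{\sqrt{y^2+1}+1}\left(1+\frac{s(\sqrt{y^2+1}-1)}{2g^2}\right)^{-(\rho+4)/2}-1,$$
so that $|D(y)|\le 1$ uniformly and, by Taylor expansion around $y=0$,
$$|D(y)|\lesssim y^2\Bigl(1+\frac{s}{g^2}\Bigr)\lesssim \frac{sy^2}{g^2},\qquad 0\le y\le 1.$$
This extra $y^2$ near the singularity replaces the $y^{-2-\gamma}$ blowup of $\sigma_0(\theta_\Lambda)$ by $y^{-\gamma}$, which will precisely kill the factor $(p^0)^{\gamma/2}$ that appeared in the leading-order calculation of $\zeta_0$ in Section~\ref{sec:fullsharpupper zeta0}, leaving a bound of size $(p^0)^{\rho/2}$.

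The plan is to split the $dy$ integral into $y\in(0,1)$ and $y\ge 1$. On $y\ge 1$, use the crude bound $|D(y)|\le 1$ together with $\sigma_0(\theta_\Lambda)\lesssim 1$ (since $sy^2\gtrsim g^2$ there by \eqref{s.ge.g2}), the growth $g_\Lambda\lesssim\sqrt{s\sqrt{y^2+1}}$ from \eqref{ineq.gL.here}, and the pointwise bound $I_0(jy)\le e^{jy}$. This reduces matters to integrals of the type $e^l\int_1^\infty \tfrac{y\,dy}{\sqrt{y^2+1}}(y^2+1)^{\alpha}e^{-l\sqrt{y^2+1}}I_0(jy)$ handled by the exact formulas \eqref{J2.lemma} and \eqref{k2lj.lemma}; combined with the identity \eqref{j2l} giving $l-\sqrt{l^2-j^2}\le q^0$ and the lower bound $\sqrt{l^2-j^2}\ge\tfrac14|p-q|$ from \eqref{l2j2size}, the $e^{-q^0}$ weight absorbs the resulting $e^{q^0}/|p-q|$ factor and the $dq$ integral produces $(p^0)^{\rho/2}$ via \eqref{g.le.sqrtpq} in the hard case and \eqref{g.ge.lower} in the soft case, mirroring \eqref{zeta2firstcase.hard}--\eqref{zeta2firstcase.soft}. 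On $y\in(0,1)$, use $|D(y)|\lesssim sy^2/g^2$ together with $\sigma_0(\theta_\Lambda)\lesssim y^{-2-\gamma}(y^2+\sqrt{y^2+1})^{1+\gamma/2}$ from \eqref{ang.equiv}; the resulting $y$-singularity weakens to $y^{-\gamma}$, which is integrable since $\gamma\in(0,2)$. Expand $I_0(jy)=1+(I_0(jy)-1)$ as in the proof of Lemma~\ref{lemma.zeta2}: the second piece reuses the $Y_M$ analysis from \eqref{yM.bound}--\eqref{YM2spc} but now without the $M=0$ gain that produced $l^{\gamma/2}$, so the saving $l^{-\gamma/2}$ precisely degrades $(p^0)^{(\rho+\gamma)/2}$ to $(p^0)^{\rho/2}$; the first piece is evaluated via $\int_0^1 y^{1-\gamma}\exp(l(1-\sqrt{y^2+1}))\,dy\lesssim l^{(\gamma-2)/2}$ by the change of variables $z=ly^2$ as in Lemma~\ref{lemma.zeta2}.

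The main obstacle will be Step~3, namely controlling the factor $s/g^2$ in $|D(y)|\lesssim sy^2/g^2$ in the small-$g$ regime (where $p$ and $q$ are close) so as not to destroy the $dq$ integrability. This is resolved by exploiting the matching behavior of $\sigma_0(\theta_\Lambda)$ itself: in the regime $sy^2\ll g^2$ where $D$ is genuinely small of order $sy^2/g^2$, the angular factor $\sigma_0(\theta_\Lambda)\approx g^{2+\gamma}s^{-1-\gamma/2}y^{-2-\gamma}$ contributes $g^{2+\gamma}$ which absorbs the $g^{-2}$ from $|D|$, leaving only $g^\gamma$ after integration in $y$ over $(0,g/\sqrt{s})$ and yielding the clean pointwise size $s\,g^{\rho+2}$ for the integrand; combined with $\tfrac{1}{p^0}\int\tfrac{dq}{q^0}\tfrac{e^{-q^0}\sqrt{s}}{g}$ and the bounds \eqref{g.le.sqrtpq}--\eqref{g.ge.lower}, this gives the desired $(p^0)^{\rho/2}$ without loss. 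In the complementary regime $sy^2\gtrsim g^2$ one uses $|D|\le 1$ directly and reruns the exponential-decay estimates above. Combining the two regions and handling the trivial case $|p|\le 1$ separately completes the proof.
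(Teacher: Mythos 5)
Your overall architecture matches the paper's: you exploit the vanishing of $\frac{s\Phi(g)g^4}{s_\Lambda\Phi(g_\Lambda)g_\Lambda^4}-1$ at $y=0$ to weaken the angular singularity from $y^{-2-\gamma}$ to $y^{-\gamma}$, split the $dy$ integral at $y=1$, and invoke the closed forms \eqref{J2.lemma} and \eqref{k2lj.lemma}. (The paper packages the smallness as $\lesssim \frac{s(\sqrt{y^2+1}-1)}{2g_\Lambda^2}$, with $g_\Lambda^2$ rather than $g^2$ in the denominator; this combines with $\sigma_0(\theta_\Lambda)\lesssim (g_\Lambda^2/s)^{1+\gamma/2}y^{-2-\gamma}(\sqrt{1+y^2})^{1+\gamma/2}$ into the single uniform kernel bound \eqref{general.kernel.est.here} valid for all $y$, so your separate matching analysis in the regimes $sy^2\lessgtr g^2$ is avoidable.)

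There is, however, a genuine gap in your $y\ge 1$ step. The relevant closed form is \eqref{k2lj.lemma}, whose prefactor is $l^2(\sqrt{l^2-j^2})^{-3}\bigl(1+O((l^2-j^2)^{-1})\bigr)$, i.e.\ by \eqref{l2j2} roughly $\frac{(p^0+q^0)^2}{16}\bigl(\frac{4g}{\sqrt{s}\,|p-q|}\bigr)^3$. After using $g\le|p-q|$ and $s\ge 4$ this is only $\lesssim (p^0+q^0)^2$, and when $q$ is close to $p$ with both large, this extra $(p^0)^2$ is not absorbed by anything in your scheme. Your assertion that the $dq$ integration then "mirrors" \eqref{zeta2firstcase.hard}--\eqref{zeta2firstcase.soft} is incorrect: those estimates rest on $J_2$, whose prefactor $(\sqrt{l^2-j^2})^{-1}\lesssim 4/|p-q|$ carries no factor of $l^2$. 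The paper removes the $(p^0+q^0)^2$ precisely by splitting the $dq$ integral at $|q|=\frac12|p|^{1/m}$: on $\qlep$ one keeps the full $|p-q|^{-3}$ and uses $|p-q|\ge p^0/4$ from \eqref{pminusq bound} to get $\frac{(p^0+q^0)^2}{|p-q|^3}\lesssim\frac{1}{p^0}$, while on $\qgep$ the bound $q^0\gtrsim(p^0)^{1/m}$ yields the super-polynomial factor $e^{-c(p^0)^{1/m}}$ of \eqref{exponential.more}, which beats any polynomial loss. Your plan contains no such splitting, and without it the claimed bound $(p^0)^{\singS/2}$ fails by two powers of $p^0$. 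Relatedly, your exponential bookkeeping "the $e^{-q^0}$ weight absorbs the resulting $e^{q^0}$ factor" leaves no decay at all in $q$, so the $dq$ integral would not even converge to a polynomial bound; one must instead combine $e^{-q^0}e^{l}e^{-\sqrt{l^2-j^2}}\le e^{-q^0/2}\exp\bigl(\frac{p^0-q^0-|p-q|}{4}\bigr)\le e^{-q^0/2}$ using \eqref{l2j2size} and \eqref{p0q0.le.pq}, retaining half of the exponential decay in $q^0$ for the final integration.
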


\begin{proof}   By \eqref{zetaLBy} and the definition of $l$ and $j$ of \eqref{lj} we have
	\begin{multline}\label{zetaL.def.here}
	\zetaL \eqdef \frac{c'}{p^0}e^{\frac{p^0}{4}}\int_{\rth}\frac{dq}{q^0}\frac{e^{-\frac{3}{4}q^0}}{g}\sqrt{s}\\
	\times\int_{0}^\infty \frac{ydy}{\sqrt{y^2+1}}s_\Lambda\sigma(g_\Lambda,\theta_\Lambda)
	\exp\left(-l\sqrt{y^2+1}\right)I_0\left(jy\right) \left(\frac{s\Phi(g)g^4}{s_\Lambda  \Phi(g_\Lambda)g^4_\Lambda}-1 \right).
	\end{multline} 
	In the hard interaction case \eqref{hard}, we have $\frac{\Phi(g)}{\Phi(g_\Lambda)}=\left(\frac{g}{g_\Lambda}\right)^{a}$ with $a<2.$ Since $g\leq g_\Lambda$ from \eqref{g2y.variable},  we have $\frac{\Phi(g)}{\Phi(g_\Lambda)}\geq \frac{g^2}{g^2_\Lambda}$. Then this  implies
	$$
	\left|\frac{s\Phi(g)g^4}{s_\Lambda  \Phi(g_\Lambda)g^4_\Lambda} - 1\right|=1-\frac{s\Phi(g)g^4}{s_\Lambda  \Phi(g_\Lambda)g^4_\Lambda} \leq 1-\frac{sg^6}{s_\Lambda  g^6_\Lambda}=\frac{s_\Lambda g^6_\Lambda-sg^6}{s_\Lambda g^6_\Lambda}.
	$$
	Further note that we have 
	\begin{multline*}
	s_\Lambda g^6_\Lambda-sg^6=(g^8_\Lambda-g^8)+4(g^6_\Lambda-g^6)\\= (g^2_\Lambda-g^2)\left((g_\Lambda^4+g^4)(g^2_\Lambda+g^2)+4g^4_\Lambda+4g^2_\Lambda g^2+4g^4\right)\\\lesssim \frac{s}{2}(\sqrt{y^2+1}-1)
	g^4_\Lambda s_\Lambda,
	\end{multline*}
	since $g^2_\Lambda-g^2=\frac{s}{2}(\sqrt{y^2+1}-1)$ from \eqref{g2y.variable}, $s_\Lambda\eqdef g^2_\Lambda+4$ and again $g\leq g_\Lambda$.
	Therefore, we have 
\begin{equation}\label{difference.estimate.here}
    	\left|\frac{s\Phi(g)g^4}{s_\Lambda  \Phi(g_\Lambda)g^4_\Lambda} - 1\right|\leq\frac{s_\Lambda g^6_\Lambda-sg^6}{s_\Lambda g^6_\Lambda}\lesssim \frac{\frac{s}{2}(\sqrt{y^2+1}-1)}{g^2_\Lambda}.
\end{equation}
	This is the main estimate  for this difference in the hard interaction case.
	
We now consider the same estimate in the soft interaction case \eqref{soft}.   Since $g\leq g_\Lambda$ and $\frac{\Phi(g)}{\Phi(g_\Lambda)}=\left(\frac{g}{g_\Lambda}\right)^{-b}$ with $b\in[\gamma,2),$ then we have $\frac{\Phi(g)}{\Phi(g_\Lambda)}\geq 1$. Then $b\in[\gamma,2)$ 
	further implies
	$$1-\frac{s\Phi(g)g^4}{s_\Lambda  \Phi(g_\Lambda)g^4_\Lambda} \leq 1-\frac{sg^4}{s_\Lambda  g^4_\Lambda}=\frac{s_\Lambda g^4_\Lambda-sg^4}{s_\Lambda g^4_\Lambda}.
	$$
In this case we also have
\begin{multline*}
	s_\Lambda g^4_\Lambda-sg^4=(g^6_\Lambda-g^6)+4(g^4_\Lambda-g^4)= (g^2_\Lambda-g^2)\left(g^4_\Lambda+g^2_\Lambda g^2+g^4+4g^2_\Lambda+4g^2\right)\\=(g^2_\Lambda-g^2)\left(g^4_\Lambda+g^2_\Lambda g^2+g^4+4g^2_\Lambda+4g^2\right)
	\leq \frac{s}{2}(\sqrt{y^2+1}-1)(3g^4_\Lambda+8g^2_\Lambda)\\
	\lesssim
	\frac{s}{2}(\sqrt{y^2+1}-1)g^2_\Lambda s_\Lambda,
\end{multline*}
because again $g^2_\Lambda-g^2=\frac{s}{2}(\sqrt{y^2+1}-1)$ from \eqref{g2y.variable}.  Therefore, we have
	\begin{equation}\label{upperbound.allcases.zeta1}
	\left|\frac{s\Phi(g)g^4}{s_\Lambda  \Phi(g_\Lambda)g^4_\Lambda} - 1\right|\leq\frac{s_\Lambda g^4_\Lambda-sg^4}{s_\Lambda g^4_\Lambda}\lesssim \frac{\frac{s}{2}(\sqrt{y^2+1}-1)}{g^2_\Lambda}.
	\end{equation}
Note that in both the hard interaction case and the soft interaction case the final upper bounds are the same in  \eqref{difference.estimate.here} and \eqref{upperbound.allcases.zeta1}.

	In both cases, then plugging \eqref{difference.estimate.here} and \eqref{upperbound.allcases.zeta1} into \eqref{zetaL.def.here}  we have 
	\begin{equation}\label{zetaLm.upper.first}
	|\zetaL|  \lesssim  \frac{1}{p^0}e^{\frac{p^0}{4}}\int_{\rth}\frac{dq}{q^0}\frac{e^{-\frac{3}{4}q^0}}{g}s^{1/2}
K_2(p,q),
	\end{equation}
where we define $K_2 = K_2(p,q)$ by
	\begin{equation}\label{K2definition.appendix}
	K_2\eqdef  \int_{0}^\infty \frac{ydy}{\sqrt{y^2+1}}s_\Lambda\sigma(g_\Lambda,\theta_\Lambda)
	\exp\left(-l\sqrt{y^2+1}\right)I_0\left(jy\right) \frac{\frac{s}{2}(\sqrt{y^2+1}-1)}{g^2_\Lambda}.
	\end{equation}
	We will split into two cases:
	 $y\leq 1$ and $y>1$. We write $K_2=K_{2,\le 1}+K_{2,\ge 1}$ below where  $K_{2,\le 1}$ and $ K_{2,\ge 1}$ denote $K_2$ on $y\le 1$ and $y\ge 1$, respectively.

First let us generally estimate the kernel.  We will use the product form \eqref{kernel.product} with the estimates \eqref{hard}-\eqref{soft}-\eqref{rho.def} to obtain
\begin{equation}\notag
    s_\Lambda\sigma(g_\Lambda,\theta_\Lambda) \frac{\frac{s}{2}(\sqrt{y^2+1}-1)}{g^2_\Lambda}
    \approx s_\Lambda g_\Lambda^{\rho-2} s \sigma_0(\theta_\Lambda) (\sqrt{y^2+1}-1) 
    \approx   \frac{s_\Lambda g_\Lambda^{\rho} s \sigma_0(\theta_\Lambda) y^2 }{g_\Lambda^{2}(\sqrt{y^2+1}+1)}.
\end{equation}
Additionally using \eqref{ang.equiv} with \eqref{glambda.calc} we have 
\begin{equation}\notag
\sigma_0(\theta_\Lambda)
\approx
\left(\frac{s}{ g_\Lambda^2} \frac{y^2}{2(\sqrt{y^2+1}+1)}\right)^{-1-\gamma/2}
\lesssim  y^{-2-\gamma}(\sqrt{1+y^2})^{1+\gamma/2} \left(\frac{ g_\Lambda^2}{s}\right)^{1+\gamma/2}
\end{equation}
We plug this into the previous estimate to obtain
\begin{multline}\notag
s_\Lambda\sigma(g_\Lambda,\theta_\Lambda) \frac{\frac{s}{2}(\sqrt{y^2+1}-1)}{g^2_\Lambda}
\lesssim  
y^{-\gamma}(1+y^2)^{\gamma/4}
\frac{s_\Lambda g_\Lambda^{\rho} s }{g_\Lambda^{2}}\left(\frac{ g_\Lambda^2}{s}\right)^{1+\gamma/2}
\\
\lesssim  
y^{-\gamma}(1+y^2)^{\gamma/4}
s_\Lambda g_\Lambda^{\rho} \left(\frac{ g_\Lambda^2}{s}\right)^{\gamma/2}.
\end{multline}
We conclude from \eqref{ineq.gL.here} and the above that in general we have
\begin{equation}\notag
s_\Lambda\sigma(g_\Lambda,\theta_\Lambda) \frac{\frac{s}{2}(\sqrt{y^2+1}-1)}{g^2_\Lambda}
\lesssim  
y^{-\gamma}(1+y^2)^{\gamma/2}
s_\Lambda g_\Lambda^{\rho} 
\lesssim  
y^{-\gamma}(1+y^2)^{(1+\gamma)/2}
s g_\Lambda^{\rho}.
\end{equation}
In particular, recalling \eqref{hard}-\eqref{soft}-\eqref{rho.def} and using \eqref{difference.estimate.here}, \eqref{upperbound.allcases.zeta1}, and \eqref{ineq.gL.here}, then in general we have
\begin{equation}\label{general.kernel.est.here}
s_\Lambda\sigma(g_\Lambda,\theta_\Lambda)
	\left|\frac{s\Phi(g)g^4}{s_\Lambda  \Phi(g_\Lambda)g^4_\Lambda} - 1\right|
\lesssim  \mathbb{S} ~ 
y^{-\gamma}(1+y^2)^{1+\gamma/2}, \quad \forall 0 \le y \le \infty.
\end{equation}
This holds in particular since $a<2$ in \eqref{hard}.  Here we define
	\begin{equation}\label{S.hard.soft.def}
	\begin{split}\mathbb{S}&\eqdef s^{1+\frac{a}{2}} \text{  for hard interactions \eqref{hard},}\\
	&\eqdef sg^{-b}\text{  for soft interactions \eqref{soft}.}\end{split}\end{equation}
These are the specific estimates that we will use on the kernel of  \eqref{K2definition.appendix}.

	Now we return to estimating \eqref{K2definition.appendix} on the region when $y\leq 1$.  
	Then using the above calculations we have the following bound for $\left|K_{2,\le 1}\right|$:
	\begin{multline}\notag
	|K_{2,\le 1}|
	\lesssim
	\mathbb{S}\int_0^{1}dy\  y^{1-\gamma}
	(1+y^2)^{(1+\gamma)/2}
	\exp\left(-l\sqrt{y^2+1}\right) I_0(j y)
	\\
	\lesssim
\mathbb{S}\int_0^{1}dy\  y^{1-\gamma}
\exp\left(-l\sqrt{y^2+1}\right) I_0(j y)
	\lesssim
\mathbb{S}\bar{K}_\gamma(l,j),
	\end{multline}  
where we used $(1+y^2)^{(1+\gamma)/2}\lesssim 1$ as $y\in (0,1)$, and $\gamma>0$.  	Since $\gamma\in (0,2)$, this integral converges.  In the last upper bound we used \eqref{int.Kgamma}. From \eqref{smally.lemma} we conclude
	\begin{equation}\label{smally}
|K_{2,\le 1}|
	\lesssim
	\mathbb{S} \exp\left(-\sqrt{l^2-j^2}\right).
	\end{equation}
	This completes our estimate on the region when $y \le 1$.

	On the other region when $y>1$, using \eqref{general.kernel.est.here} and \eqref{S.hard.soft.def},   for the integral defined in \eqref{K2definition.appendix}  we have for both hard and soft interactions, that
$$
|K_{2,\ge 1}|
\lesssim
\mathbb{S}\int_{1}^\infty\ dy\ y(y^2+1)^{1/2}\exp\left(-l\sqrt{y^2+1}\right) I_0(j y)
\lesssim \mathbb{S} \tilde{K}_2(l,j).
$$
Here $\tilde{K}_2(l,j)$ is defined in \eqref{tildeK2def}.  The formula for $\tilde{K}_2$ is \eqref{k2lj.lemma} 
and we have
\begin{equation}\label{k2lj}
\tilde{K}_2(l,j)
	\lesssim (\sqrt{l^2-j^2})^{-5}\exp(-\sqrt{l^2-j^2}) (l^2-j^2+1)l^2.
\end{equation}
By \eqref{l2j2}, we have $l^2-j^2=\frac{s}{16g^2}|p-q|^2.$
Thus, we obtain
\begin{multline}\notag\tilde{K}_2(l,j)\lesssim \frac{l^2}{(\sqrt{l^2-j^2})^3} \left(1+\frac{1}{l^2-j^2}\right)\exp(-\sqrt{l^2-j^2}) \\
 	\lesssim
 	\frac{(p^0+q^0)^2}{16}\left(\frac{4g}{\sqrt{s}|p-q|}\right)^3\left(1+\left(\frac{4g}{\sqrt{s}|p-q|}\right)^2\right)\exp(-\sqrt{l^2-j^2}).\end{multline}
 	We point out that due to \eqref{g.le.upper} the above is not singular when $|p-q|=0$.

 	Note that using \eqref{s.ge.g2} and \eqref{g.le.upper} we have
 \begin{equation}\label{tildeK2size0} 1+\left(\frac{4g}{\sqrt{s}|p-q|}\right)^2\le 1+\frac{16}{s}\lesssim 1.  
\end{equation} 
Also using $g\le \sqrt{s}$, which follows from \eqref{s.ge.g2}, we have
 \begin{multline}\label{tildeK2size}\tilde{K}_2(l,j)\lesssim  \frac{(p^0+q^0)^2}{16}\left(\frac{4g}{\sqrt{s}|p-q|}\right)^3\exp(-\sqrt{l^2-j^2})\\
 	\lesssim  	\frac{(p^0+q^0)^2}{16}\left(\frac{4}{|p-q|}\right)^3\exp(-\sqrt{l^2-j^2})\\
\lesssim  	\frac{(p^0+q^0)^2}{|p-q|^3}\exp(-\sqrt{l^2-j^2}).
 \end{multline}
 We will use estimate \eqref{tildeK2size} to control the size of $|K_{2,\ge 1}|$ below.

We will now to use the region $|q|\le \frac{1}{2}|p|^{1/m}$ and $|p| \ge 1$  to complete our estimate of $|K_{2,\ge 1}|$.  Then later we will do separate estimates on the complementary region: $|q|\ge \frac{1}{2}|p|^{1/m}$.  Now since $|q|\le \frac{1}{2}|p|^{1/m}$, $m>1$ and $|p| \ge 1$, 
then we have 
\begin{equation}\label{pminusq bound}\frac{p^0}{4}\le \frac{|p|}{2}\le |p-q|\le \frac{3}{2}|p|,
\end{equation}
and
\begin{equation}\label{q0 bound}1\le q^0 \le2 (p^0)^{1/m}.\end{equation}
Thus we have
$$\frac{(p^0+q^0)^2}{|p-q|^3}\lesssim \frac{1}{p^0}.$$
Hence we obtain from \eqref{tildeK2size} that 
 \begin{equation}
     \label{tildeK2size.ref2}
 \tilde{K}_2(l,j)
\lesssim  	\frac{1}{p^0}\exp(-\sqrt{l^2-j^2}).
 \end{equation}
We therefore conclude from \eqref{tildeK2size.ref2} that 
\begin{equation}
	\label{largey}
		|K_{2,\ge 1}|\lesssim \mathbb{S}\frac{1}{p^0}\exp\left(-\sqrt{l^2-j^2}\right).
	\end{equation}
	By \eqref{S.hard.soft.def}, (\ref{smally}) and (\ref{largey}), using \eqref{convention}  we finally obtain
	\begin{multline*}
	[|\zetaL(p)|]_{|q|\le \frac{1}{2}|p|^{1/m}} \lesssim \frac{1}{p^0}e^{\frac{p^0}{4}}\int_{\qlep} \frac{dq}{q^0}\frac{\sqrt{s}}{g}e^{-\frac{3}{4}q^0}\mathbb{S}\exp\left(-\sqrt{l^2-j^2}\right)\\
\lesssim \frac{1}{p^0}\int_{\qlep} \frac{dq}{q^0}e^{-\frac{q^0}{2}}\frac{\sqrt{s}}{g}\mathbb{S}\exp\left(\frac{p^0-q^0}{4}-\sqrt{l^2-j^2}\right).
	\end{multline*} 
	We will use the inequality above to obtain the final upper bounds.

	In the hard interaction case \eqref{hard}, we use \eqref{s.le.pq}, \eqref{g.ge.lower} and \eqref{l2j2size} to obtain
	\begin{equation}\label{zeta2final.hard}
	[|\zetaL(p)|]_{|q|\le \frac{1}{2}|p|^{1/m}}\lesssim \int_{\rth}dq\ \frac{(p^0q^0)^{1+\frac{\singA}{2}}}{|p-q|}e^{-\frac{q^0}{2}}\exp\left(\frac{p^0-q^0-|p-q|}{4}\right).
	\end{equation}
	Therefore, from \eqref{exponential.bound.1} we have
$$	[|\zetaL(p)|]_{|q|\le \frac{1}{2}|p|^{1/m}}\lesssim(p^0)^{\frac{\singA}{2}}.$$
	In the soft interaction case \eqref{soft}, we use \eqref{g.ge.lower} and \eqref{l2j2size} to obtain
	\begin{multline}\label{zeta2final.soft}
	[|\zetaL(p)|]_{|q|\le \frac{1}{2}|p|^{1/m}} \\ 
	\lesssim \int_{	{|q|\le \frac{1}{2}|p|^{1/m}}}dq\ \frac{(p^0q^0)^{\frac{\singB}{2}+1}}{|p-q|^{1+\singB}} e^{-\frac{q^0}{2}}\exp\left(\frac{p^0-q^0-|p-q|}{4}\right).
	\end{multline}
	By \eqref{exponential.bound.1}, since $\singB<2$, we then have
	\begin{equation*}	[|\zetaL(p)|]_{|q|\le \frac{1}{2}|p|^{1/m}} \lesssim(p^0)^{\frac{\singB}{2}+1}\int_{\rth}dq\ |p-q|^{-1-\singB} e^{-\frac{q^0}{2}}(q^0)^{\frac{\singB}{2}+1}
	\lesssim (p^0)^{-\frac{\singB}{2}}.
	\end{equation*} 
This completes the desired estimates on the region $\qlep$

Next we perform the estimates on the complementary region where $|q|\ge \frac{1}{2}|p|^{1/m}$, $|p| \ge 1$ and $m>1$.   Therefore, in this region we have
\begin{equation}\notag
q^0 \ge |q|  \ge \frac{1}{2} \left(|p|^2 \right)^{\frac{1}{2m}}
\ge 
\frac{1}{2}\left(\frac{1}{2}\right)^{\frac{1}{2m}} \left(p^0 \right)^{\frac{1}{m}}.
\end{equation}
Then, for some $c=c_m>0$, we have additional exponential decay from
\begin{equation}\label{exponential.more}
e^{-q^0} = e^{-q^0/2} e^{-q^0/2}\le e^{-q^0/2} e^{-c(p^0)^{1/m}},
\end{equation}
Then with \eqref{exponential.more} we have exponential decay in $p^0$.

Now we need to replace the estimates on $\tilde{K}_2(l,j)$ above, which is defined in \eqref{tildeK2def}.  Recalling the estimates \eqref{k2lj} and \eqref{tildeK2size0}, instead of \eqref{tildeK2size} we use \eqref{s.ge.g2} and \eqref{g.le.upper} to obtain
 \begin{multline}\label{tilde.k2.estimate.region}
 \tilde{K}_2(l,j)\lesssim  \frac{(p^0+q^0)^2}{16}\left(\frac{4g}{\sqrt{s}|p-q|}\right)^3\exp(-\sqrt{l^2-j^2})\\
 	\lesssim (p^0+q^0)^2
 	\exp(-\sqrt{l^2-j^2}).
 \end{multline}
 We conclude from the above estimate, recalling also \eqref{S.hard.soft.def}, that
 \begin{equation}\notag
		|K_{2,\ge 1}|\lesssim \mathbb{S}(p^0+q^0)^2
 	\exp(-\sqrt{l^2-j^2}).
	\end{equation}
Then by \eqref{zetaLm.upper.first}, \eqref{K2definition.appendix}, \eqref{S.hard.soft.def}, (\ref{smally}) and the above,  we further obtain
	\begin{multline}\notag
	[|\zetaL(p)|]_{\qgep} \lesssim \frac{1}{p^0}e^{\frac{p^0}{4}}\int_{\qgep}\frac{dq}{q^0}\frac{\sqrt{s}}{g}e^{-\frac{3}{4}q^0}\mathbb{S}(p^0+q^0)^2\exp\left(-\sqrt{l^2-j^2}\right)\\
\lesssim \frac{1}{p^0}\int_{\qgep}\frac{dq}{q^0}e^{-\frac{q^0}{2}}\frac{\sqrt{s}}{g}\mathbb{S}(p^0+q^0)^2\exp\left(\frac{p^0-q^0}{4}-\sqrt{l^2-j^2}\right)
\\
\lesssim p^0e^{-c(p^0)^{1/m}}\int_{\qgep } dq ~ q^0 e^{-\frac{q^0}{4}}\frac{\sqrt{s}}{g}\mathbb{S},
	\end{multline} 	
where in the last inequality we used \eqref{p0.plus.q0.le.p0q0}, \eqref{l2j2size}, 	\eqref{exponential.bound.1} and \eqref{exponential.more}. Then from the same procedures we used to prove \eqref{zeta2final.hard} and \eqref{zeta2final.soft}, using the exponential decay in $p^0$ above, we obtain for some uniform $c'>0$ that
	\begin{equation}\label{zetaLqgep}
	[|\zetaL(p)|]_{\qgep} 
\lesssim e^{-c'(p^0)^{1/m}}.
	\end{equation} 	
Combining the previous estimates, this completes the proof.
\end{proof}

This completes the proof of the lower order upper bound estimates for $\zetaL$ from \eqref{zetaLBy}. Next, we prove that $\zetaTLm$ from \eqref{def.TzetaMdef} has a lower order upper bound.

\subsection{Lower order upper bound for $\zetaTLm$}
We now prove the following proposition:

\begin{proposition}\label{prop.zetaL.asymptoticnew}
	Suppose $\gamma \in (0,2)$  in \eqref{angassumption}, and recall \eqref{rho.def}.  For any given small $\varepsilon>0$, assume that $m$ is sufficiently large such that  
	$\frac{|\singS|+8}{2m}\le \varepsilon.$  Then for both hard \eqref{hard} and soft \eqref{soft} interactions there exists a finite constant $C_\varepsilon>0$ such that for \eqref{def.TzetaMdef} we have the following uniform asymptotic estimate
	$$\left|\zetaTLm(p)\right| \leq C_\varepsilon (p^0)^{\frac{\singS}{2}+\varepsilon}.$$
\end{proposition}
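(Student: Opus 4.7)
The approach is to decompose $\tilde{\zeta}_L$ from \eqref{eq:tildezetanew1L} as $\tilde{\zeta}_L = \tilde{\zeta}_L^{(1)} - \tilde{\zeta}_L^{(2)}$, splitting along the two terms in the bracket: $\tilde{\zeta}_L^{(1)}$ retains the factor $\exp(2l-2l\sqrt{y^2+1})I_0(2jy)$ and $\tilde{\zeta}_L^{(2)}$ retains $\exp(l-l\sqrt{y^2+1})I_0(jy)$. The key observation is that $\tilde{\zeta}_L^{(2)}$ is essentially $-\zeta_L$, since the kernel factor $\bigl(1 - s\Phi(g)g^4/(s_\Lambda\Phi(g_\Lambda)g_\Lambda^4)\bigr)$ appearing in \eqref{eq:tildezetanew1L} is the negative of the corresponding factor in \eqref{zetaLBy}. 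Consequently Proposition \ref{prop.zetaL.asymptotic} immediately gives $|[\tilde{\zeta}_L^{(2)}]_\qlep|\,1_{|p|\ge 1} = |\zeta_{L,m}(p)| \lesssim (p^0)^{\rho/2}$, which is stronger than what we need. So the nontrivial work is the bound on $[\tilde{\zeta}_L^{(1)}]_\qlep\cdot 1_{|p|\ge 1}$.

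For that I would follow the proof of Proposition \ref{prop.zetaL.asymptotic} verbatim, but replacing $(l,j)$ with $(2l,2j)$ in all uses of Lemma \ref{lem:integral.ests}. Starting from the kernel estimate \eqref{general.kernel.est.here} on $s_\Lambda\sigma(g_\Lambda,\theta_\Lambda)|1 - s\Phi(g)g^4/(s_\Lambda\Phi(g_\Lambda)g_\Lambda^4)| \lesssim \mathbb{S}\,y^{-\gamma}(1+y^2)^{1+\gamma/2}$, I would split the inner $y$-integral at $y=1$. On $y\le 1$, the estimate \eqref{smally.lemma} applied with $(l,j)\mapsto(2l,2j)$ gives $\lesssim \exp(-2\sqrt{l^2-j^2})$. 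On $y\ge 1$, the formula \eqref{k2lj.lemma} together with $l^2-j^2 = s|p-q|^2/(16g^2)$ from \eqref{l2j2} yields $\tilde{K}_2(2l,2j) \lesssim (p^0+q^0)^2|p-q|^{-3}\exp(-2\sqrt{l^2-j^2})$. On $\qlep$ with $|p|\ge 1$ this factor is $\lesssim 1/p^0$ via \eqref{pminusq bound}, so the inner integral is controlled by $\mathbb{S}\exp(2l-2\sqrt{l^2-j^2})$.

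The main obstacle, and the key point where this proof diverges from Proposition \ref{prop.zetaL.asymptotic}, is in the outer $q$-integral. In Proposition \ref{prop.zetaL.asymptotic} the crucial $e^{-q^0/2}$ decay was extracted from $e^{-q^0}\exp(l-\sqrt{l^2-j^2})$ via \eqref{p0q0.le.pq} and \eqref{l2j2size}; here however the analogous factor $e^{-q^0}\exp(2l - 2\sqrt{l^2-j^2}) = \exp((p^0-q^0)/2 - 2\sqrt{l^2-j^2})$ and the same chain of inequalities $2\sqrt{l^2-j^2} \ge |p-q|/2 \ge (p^0-q^0)/2$ only yields the bound $\le 1$, with \emph{no} residual $q^0$-decay. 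This is precisely why the restriction to $\qlep$ is essential: it bounds $q^0\le 2(p^0)^{1/m}$ by \eqref{q0 bound}, trading the lost $q^0$-decay for a controllable polynomial loss in $p^0$. Using \eqref{s.le.pq}, \eqref{g.ge.lower}, and \eqref{pminusq bound}, in both the hard and soft regimes the remaining factor $\tfrac{\sqrt{s}}{g}\mathbb{S}$ is bounded by $(p^0)^{1+\rho/2}(q^0)^{2+|\rho|/2}$, and the $q$-integration over $\qlep$ (spherical coordinates, $|q|\le (p^0)^{1/m}/2$) contributes at worst $(p^0)^{(4+|\rho|/2)/m}$.

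Assembling the pieces yields $|[\tilde{\zeta}_L^{(1)}]_\qlep\cdot 1_{|p|\ge 1}| \lesssim (p^0)^{\rho/2 + (4+|\rho|/2)/m}$. The hypothesis $(|\rho|+8)/(2m)\le \varepsilon$ is exactly the statement $(4+|\rho|/2)/m\le\varepsilon$, so this bound is at most $C_\varepsilon (p^0)^{\rho/2+\varepsilon}$. Combining with the bound $|\zeta_{L,m}(p)|\lesssim (p^0)^{\rho/2}$ for the $\tilde{\zeta}_L^{(2)}$ piece completes the proof.
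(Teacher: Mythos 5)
Your proposal is correct and follows essentially the same route as the paper's proof: the same splitting $\zetaTLm=\zetaLTone-\zetaLTtwo$ with $\zetaLTtwo=-\zetaLm$ handled by Proposition \ref{prop.zetaL.asymptotic}, the same kernel bound \eqref{general.kernel.est.here}, the split at $y=1$ with $\bar K_\gamma(2l,2j)$ and $\tilde K_2(2l,2j)$, and the same observation that $e^{-q^0}\exp(2l-2\sqrt{l^2-j^2})\le 1$ forces one to trade the lost $q^0$-decay for the $(p^0)^{1/m}$ losses controlled by the hypothesis $\frac{|\rho|+8}{2m}\le\varepsilon$. The final power counting agrees with the paper's \eqref{zetaLfinal.hard} and \eqref{zetaLfinal.soft}.
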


We recall \eqref{eq:tildezetanew1L} and \eqref{def.TzetaMdef} with \eqref{convention}.  Then we use the following representation in this section (implicitly assuming $|p| \ge 1$): 
\begin{multline}\notag
	\zetaTLm(p)
	=\frac{c'}{ p^0}\int_{\qlep}\frac{dq}{q^0}\frac{e^{-q^0}\sqrt{s}}{g}\int_{0}^\infty \frac{ydy}{\sqrt{y^2+1}}s_\Lambda\sigma(g_\Lambda,\theta_\Lambda)\left(1-\frac{s\Phi(g)g^4}{s_\Lambda  \Phi(g_\Lambda)g^4_\Lambda} \right)\\\times \left( \exp(2  l -2 l\sqrt{y^2+1})I_0(2jy)
-\exp(  l - l\sqrt{y^2+1})I_0(jy)\right)
	\eqdef \zetaLTone-\zetaLTtwo.\end{multline}
The splitting of $\zetaTL$ into $\zetaLTone$ and $\zetaLTtwo$ allows us to realize $\zetaLTtwo=-\zetaLm$ from \eqref{def.zetaMdef} with \eqref{zetaLBy} and the lower order upper bound estimate for $\zetaL$ was already given in Proposition \ref{prop.zetaL.asymptotic}.

\begin{proof}
Based on the above discussion, in this proof we only need to give the asymptotic upper bound for $\zetaLTone$.  We start with
\begin{multline}\notag
	\zetaLTone =\frac{c'}{ p^0}\int_{\qlep}\frac{dq}{q^0}\frac{e^{-q^0}\sqrt{s}}{g}\int_{0}^\infty \frac{ydy}{\sqrt{y^2+1}}s_\Lambda\sigma(g_\Lambda,\theta_\Lambda)\left(1-\frac{s\Phi(g)g^4}{s_\Lambda  \Phi(g_\Lambda)g^4_\Lambda} \right)\\\times  \exp(2  l -2 l\sqrt{y^2+1})I_0(2jy).
\end{multline} 
We recall that we have the kernel estimate 
 \eqref{general.kernel.est.here} with the notation \eqref{S.hard.soft.def}.

Thus when $y\le 1$, since $0<\gamma<2$, we have
\begin{equation}\notag
	[ \zetaLTone]_{ y\le1} \lesssim\frac{1}{ p^0}\int_{\qlep}\frac{dq}{q^0}\frac{e^{-q^0}s^{1/2}}{g}\exp(2l)\bar{K}_\gamma(2l,2j) \mathbb{S},
\end{equation}
where we defined $\bar{K}_\gamma(l,j)$ in \eqref{int.Kgamma}.
In particular from \eqref{smally.lemma} we have
\begin{equation}\notag
\bar{K}_\gamma(2l,2j) \lesssim \exp (-\sqrt{(2l)^2-(2j)^2}).\end{equation} 
Thus, when $y\le 1$, we have
\begin{equation}\label{smally.zetaL}
	[ \zetaLTone]_{ y\le1} \lesssim\frac{1}{ p^0}\int_{\qlep}\frac{dq}{q^0}\frac{e^{-q^0}s^{1/2}}{g}\exp (2l-\sqrt{4l^2-4j^2})\mathbb{S}.
\end{equation}
Above we are using the convention from \eqref{convention}.

 On the other hand, if $y\ge 1$, we again use the kernel estimate 
 \eqref{general.kernel.est.here} with \eqref{S.hard.soft.def}.  Then, for both hard and soft interactions, we have
 \begin{multline}\notag
 	[ \zetaLTone]_{ y\ge1} \lesssim\frac{1}{ p^0}\int_{\qlep} \frac{dq}{q^0}\frac{e^{-q^0}s^{1/2}}{g}\exp(2l)\mathbb{S}\\\times\int_1^\infty dy\  y(y^2+1)^{1/2}  \exp(-2 l\sqrt{y^2+1})I_0(2jy).
 \end{multline}
 Then note that 
 $$\int_1^\infty dy\  y(y^2+1)^{1/2}  \exp( -2 l\sqrt{y^2+1})I_0(2jy) \le \tilde{K}_2(2l,2j),$$ where $\tilde{K_2}$ is defined in \eqref{tildeK2def}. 
 Then by \eqref{tildeK2size.ref2}, on the region $|q|\le \frac{1}{2}|p|^{1/m}$, we have
\begin{equation}\label{kernel.region.estimate}
    \tilde{K}_2(2l,2j)\lesssim \frac{1}{p^0}\exp(-\sqrt{4l^2-4j^2}).
\end{equation}
Hence if $y\ge 1$, we have
 \begin{equation}\label{largey.zetaL}
 	[ \zetaLTone ]_{ y\ge1} \lesssim\frac{1}{ p^0}\int_\qlep \frac{dq}{q^0}\frac{e^{-q^0}s^{1/2}}{gp^0}\mathbb{S}\exp(2l-\sqrt{4l^2-4j^2}).
 \end{equation}
 Thus, combining \eqref{smally.zetaL} and \eqref{largey.zetaL}, we obtain \begin{equation}\notag
 	 \zetaLTone \lesssim \frac{1}{p^0}\int_\qlep \frac{dq}{q^0}\ \frac{e^{-q^0}s^{1/2}}{g}\mathbb{S}\exp(2l-\sqrt{4l^2-4j^2}).
 \end{equation}
 We will now split this estimate into the hard \eqref{hard} and soft \eqref{soft} interaction cases.

 In the hard interaction case \eqref{hard}, we use \eqref{g.ge.lower}, \eqref{s.le.pq} and \eqref{l2j2size} to obtain
 \begin{equation}\notag
 	\zetaLTone 
 	\lesssim \int_\qlep dq\ |p-q|^{-1}(p^0q^0)^{1+\frac{\singA}{2}}\exp\left(\frac{p^0-q^0-|p-q|}{2}\right).
 \end{equation}
 Then by \eqref{pminusq bound}, \eqref{q0 bound}, and \eqref{exponential.bound.1}, we have
 \begin{multline}\label{zetaLfinal.hard}
 	 \zetaLTone 
 	\lesssim \int_\qlep dq\ |p|^{-1}(p^0)^{\left(1+\frac{\singA}{2}\right)\left(1+\frac{1}{m}\right)}\\\lesssim |p|^{-1+\frac{3}{m}}(p^0)^{\left(1+\frac{\singA}{2}\right)\left(1+\frac{1}{m}\right)}\lesssim (p^0)^{\frac{\singA}{2}+\frac{\singA+8}{2m}},
 \end{multline}
since $|p|\ge 1.$ Then for any given small $\varepsilon>0$, we choose $m$ sufficiently large such that  
 $\frac{\singA+8}{2m}\le \varepsilon.$ This yields Proposition \ref{prop.zetaL.asymptoticnew} in the  hard interaction case on the region $|q|\le \frac{1}{2}|p|^{1/m}$.

 In the soft interaction case \eqref{soft}, we use \eqref{g.ge.lower} and \eqref{l2j2size} to obtain
 \begin{equation}\notag
 	 \zetaLTone 
 	\lesssim \int_\qlep dq\ |p-q|^{-1-\singB} (p^0q^0)^{\frac{\singB}{2}+1}\exp\left(\frac{p^0-q^0-|p-q|}{2}\right).
 \end{equation} 
 Now we use \eqref{pminusq bound}, \eqref{q0 bound}, and \eqref{exponential.bound.1} to obtain for $|p|\ge 1$ that
 \begin{multline}\label{zetaLfinal.soft}
 	 \zetaLTone
	\lesssim \int_\qlep dq\ |p|^{-1-\singB}(p^0)^{\left(1+\frac{\singB}{2}\right)\left(1+\frac{1}{m}\right)}\\\lesssim |p|^{-1-\singB+\frac{3}{m}}(p^0)^{\left(1+\frac{\singB}{2}\right)\left(1+\frac{1}{m}\right)}\lesssim (p^0)^{-\frac{\singB}{2}+\frac{\singB+8}{2m}}.
\end{multline}
For any given small $\varepsilon>0$, we choose $m$ sufficiently large such that  
$\frac{\singB+8}{2m}\le \varepsilon.$ 
This yields Proposition \ref{prop.zetaL.asymptoticnew} in the soft interaction case.  This completes the proof.
 \end{proof}

\subsection{Low-order upper-bound for $\zetaTone$} Lastly, we introduce the following proposition on the lower-order upper bound estimate for $|\zetaTone|$ from \eqref{def.zetatilde1}.
\begin{proposition}\label{prop.tildezeta.upper.qgep}
	Suppose $\gamma \in (0,2)$ and $m>0.$ Then for both hard \eqref{hard} and soft \eqref{soft} interactions, for some $c>0$, we have the uniform upper bound for \eqref{def.zetatilde1}:
	$$\left|\zetaTone(p)\right| \lesssim e^{-c(p^0)^{1/m}},$$ 
\end{proposition}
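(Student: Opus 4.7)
The plan is to reduce to already-established estimates by exploiting the fact that the integration region $\qgep$ with $|p|\ge 1$ forces $q^0$ to grow at least like $(p^0)^{1/m}$. Starting from the representation $\tilde{\zeta} = \zeta_0 + \zetaL$ from Appendix \ref{sec:derivation}, the definition \eqref{def.zetatilde1} splits as
\begin{equation*}
\zetaTone(p) = [\zeta_0(p)]_\qgep 1_{|p|\ge 1} + [\zetaL(p)]_\qgep 1_{|p|\ge 1}.
\end{equation*}
The second term is already handled by estimate \eqref{zetaLqgep} in the proof of Proposition \ref{prop.zetaL.asymptotic}, which directly delivers a bound of the form $e^{-c'(p^0)^{1/m}}$ for some $c' > 0$.

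For the first summand, I would revisit the proof of Proposition \ref{prop.zeta0.asymptotic}, which breaks $\zeta_0 = \zeta_1 + \zeta_2$ via \eqref{zeta1B} and \eqref{zeta2B} and estimates each in Lemmas \ref{lemma.zeta1} and \ref{lemma.zeta2}. The key observation is that on $\qgep$ with $|p|\ge 1$ one has $q^0 \ge |q| \ge \frac{1}{2}|p|^{1/m} \gtrsim (p^0)^{1/m}$, so
\begin{equation*}
e^{-q^0} \le e^{-q^0/2}\cdot e^{-c(p^0)^{1/m}}
\end{equation*}
for some constant $c = c_m > 0$. Since every estimate in Lemmas \ref{lemma.zeta1} and \ref{lemma.zeta2} uses the exponential factor $e^{-q^0}$ purely for integrability and otherwise bounds the integrand by polynomial factors in $p^0$ and $q^0$, I can pull the factor $e^{-c(p^0)^{1/m}}$ outside the $dq$ integral, retain $e^{-q^0/2}$ to close the integration in $q$, and repeat the bounds \eqref{zeta.use.hard.too}, \eqref{k.ge.four.est.zeta}, \eqref{zeta2firstcase.hard}, \eqref{zeta2firstcase.soft}, \eqref{zeta2secondcase.hard}, and \eqref{zeta2secondcase.soft} with $\int_\rth dq$ replaced by $\int_\qgep dq$.

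The output of that procedure is $[\zeta_0]_\qgep 1_{|p|\ge 1} \lesssim (p^0)^{(\rho+\gamma)/2} e^{-c(p^0)^{1/m}}$. I would then absorb the polynomial prefactor using the elementary inequality $(p^0)^N e^{-c(p^0)^{1/m}} \lesssim e^{-(c/2)(p^0)^{1/m}}$ uniformly in $p^0 \ge 1$ for any fixed $N$, and combine with the $[\zetaL]_\qgep$ estimate to conclude. I do not anticipate any substantive obstacle: the argument is essentially routine given the already-developed machinery, and the role of the parameter $m$ is merely to quantify the rate of exponential decay. The only item requiring care is verifying that the polynomial prefactors produced in the cited displays really are finite after restriction to $\qgep$ and extraction of half the exponential decay, and this is transparent from inspection.
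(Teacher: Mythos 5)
Your proposal is correct and follows essentially the same route as the paper: the authors likewise split $\zetaTone$ via $\tilde{\zeta}=\zeta_0+\zeta_L$, quote \eqref{zetaLqgep} for the $\zeta_L$ part, and rerun the bounds \eqref{zeta.use.hard.too}, \eqref{k.ge.four.est.zeta}, \eqref{zeta2firstcase.hard}--\eqref{zeta2secondcase.soft} on $\qgep$ after extracting the factor $e^{-c(p^0)^{1/m}}$ from \eqref{exponential.more}. The only minor point of care (which does not affect correctness) is that in the $\zeta_2$ estimates on $0<y<1$ the surviving decay is $e^{-q^0/4}$ rather than $e^{-q^0/2}$, so one splits that smaller exponent instead.
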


\begin{proof}
Note that on $\qgep$, then we will prove that each decomposed piece $\zeta_0$ from \eqref{zeta0B} and $\zeta_L$ from \eqref{zetaLBy} of $\tilde{\zeta}$ is lower order as above.  

In Section \ref{sec:fullsharpupper zeta0}, in both \eqref{zeta.use.hard.too} and \eqref{k.ge.four.est.zeta}, if we restrict the domain to the case $|q|\ge \frac{1}{2}|p|^{1/m}$, then we have the bound \eqref{exponential.more}
for some uniform $c>0$.  Therefore, $\zeta_1$ in this subregion is lower order in $p^0$, as it has additional exponential decay $e^{-c(p^0)^{1/m}}$. Similarly, for $\zeta_2$, in \eqref{zeta2firstcase.hard}, \eqref{zeta2firstcase.soft}, \eqref{zeta2secondcase.hard}, and \eqref{zeta2secondcase.soft}, can again use \eqref{exponential.more}
on the region $|q|\ge \frac{1}{2}|p|^{1/m}$. Thus, $\zeta_2$ is also lower order in this region, and hence $\zeta_0$ is lower order when $\qgep$.

On the other hand for $\zeta_L$ from \eqref{zetaLBy}, we have \eqref{zetaLqgep} in Section \ref{sec:low order zeta1} which is exactly the desired estimate.  Thus, $\zeta_L$ is also lower order in $p^0$, and hence $\tilde{\zeta}$ is also lower order in this sub-region.  This completes the proof.\end{proof}

\appendix

 \section{Collision frequency multiplier derivation}\label{sec:derivation}

We now explain a derivation of an alternative form of $\tilde{\zeta}(p)$ from \eqref{tildezeta}, and give the new decomposition of  $\tilde{\zeta}(p)$ that has been explained in Section \ref{sec:main.decomp}.

\subsection{Derivation of a new representation of $\tilde{\zeta}(p)$}
For a fixed $p\in\rth$, recalling \eqref{tildezeta}, we would like to have an alternative representation of the following integral:
\begin{equation*}
I\eqdef -\tilde{\zeta}(p)=	\int_{\rth\times\mathbb{S}^2} v_{\text{\o}} \sigma(g,\theta) \sqrt{J(q)}\left(\sqrt{J(q')}-\sqrt{J(q)}\right)dqd\omega
\eqdef I_{gain}-I_{loss}.
\end{equation*}
Initially, suppose that $\int_{\mathbb{S}^2} d\omega\   |\sigma_0(\cos\theta)| <\infty$ and that
$$
\int_{\mathbb{S}^2} d\omega\   \sigma_0(\cos\theta)=0.
$$
Then, under that condition, the loss term vanishes $I_{loss}=0$ and we obtain
\begin{equation}\label{Igainrepresentation}
I=I_{gain}=\int_{\rth}dq\int_{\mathbb{S}^2}d\omega\  v_{\text{\o}} \sigma(g,\theta)\sqrt{J(q)}\sqrt{J(q')}.
\end{equation}
By recovering the delta function involving the energy-momentum convervation laws, we obtain another representation of $I$:
$$I= \frac{1}{p^0}\int_{\rth}\frac{dq}{q^0}\int_{\rth}\frac{dp'}{p'^0}\int_{\rth}\frac{dq'}{q'^0}s\sigma(g,\theta)\delta^{(4)}(p'^\mu+q'^\mu-p^\mu-q^\mu)\sqrt{J(q)}\sqrt{J(q')}.$$
Here $g=g(p^\mu,q^\mu)$, $s=g^2+4$, $\bar{g}\eqdef g(p^\mu,p'^\mu)=g(q^\mu,q'^\mu)$, $\tilde{g} = g(p'^\mu,q^\mu)$, and
$$
\cos\theta=2\frac{\tilde{g}^2}{g^2}-1,
$$
by \eqref{cos}.
We further {\it claim} that
\begin{equation}\label{gg}
g^2=\tilde{g}^2-\frac{1}{2}(p^\mu+q'^\mu)(p'_\mu+q_\mu-p_\mu-q'_\mu).
\end{equation} Let $\tilde{s}\eqdef \tilde{g}^2+4$. Then (\ref{gg}) is equivalent to
\begin{multline*}
g^2=\tilde{g}^2-\frac{1}{2}\tilde{s}-\frac{1}{2}(p^\mu+q'^\mu)(p'_\mu+q_\mu)\\
=\frac{1}{2}\tilde{g}^2-2-\frac{1}{2}(p^\mu+q'^\mu)(p'_\mu+q_\mu)\\
=\frac{1}{2}\tilde{g}^2+g^2+2p^\mu q_\mu-\frac{1}{2}(p^\mu+q'^\mu)(p'_\mu+q_\mu).
\end{multline*}
Thus we prove (\ref{gg}) by showing that
$$\frac{1}{2}\tilde{g}^2+2p^\mu q_\mu-\frac{1}{2}(p^\mu+q'^\mu)(p'_\mu+q_\mu)=0.$$
Expanding the left-hand side of this equation, we obtain$$
-p^\mu q'_\mu-1+2p^\mu q_\mu-\frac{1}{2}p^\mu p'_\mu-\frac{1}{2}q'^\mu p'_\mu-\frac{1}{2}p^\mu q_\mu-\frac{1}{2}q'^\mu q_\mu.$$
By the result of the conservation laws $p^\mu+q^\mu=p'^\mu+q'^\mu$, we have $p^\mu q_\mu=p'^\mu q'_\mu$ and $p'^\mu q_\mu=p^\mu q'_\mu$. Therefore, we obtain $$
-1+p^\mu q_\mu-\frac{1}{2}p^\mu p'_\mu-\frac{1}{2}p^\mu q'_\mu-\frac{1}{2}p'^\mu q_\mu-\frac{1}{2}q'^\mu q_\mu,$$ which is equal to
$$-1+p^\mu q_\mu-\frac{1}{2}(p^\mu+q^\mu)(p'_\mu+q'_\mu)=-1+p^\mu q_\mu+\frac{1}{2}s=0.$$ This finishes the proof of the  {\it claim} (\ref{gg}).

By exchanging $q$ and $q'$, we have
$$I= \frac{1}{p^0}\int_{\rth}\frac{dq}{q^0}\sqrt{J(q)}\int_{\rth}\frac{dp'}{p'^0}\int_{\rth}\frac{dq'}{q'^0}\sqrt{J(q')}\tilde{s}\sigma(\tilde{g},\theta')\delta^{(4)}(p'^\mu+q^\mu-p^\mu-q'^\mu),$$
where the angle $\theta'$ is now defined as
$$\cos\theta'\eqdef 2\frac{g^2}{\tilde{g}^2}-1,$$ 
and
\begin{equation}\notag	\tilde{g}^2=g^2-\frac{1}{2}(p^\mu+q^\mu)(p'_\mu+q'_\mu-p_\mu-q_\mu).\end{equation} 
We have the new argument in the delta function and $\tilde{s}\eqdef \tilde{g}^2+4$.

We now define the functional $i(p,q)$ as
\begin{equation}\label{i}
i(p,q)\eqdef \frac{1}{p^0q^0} \int_{\rth}\frac{dp'}{p'^0}\int_{\rth}\frac{dq'}{q'^0}\sqrt{J(q')}\tilde{s}\sigma(\tilde{g},\theta')\delta^{(4)}(p'^\mu+q^\mu-p^\mu-q'^\mu),
\end{equation} so that we have
\begin{equation}\label{Ipq}
I=\int_\rth i(p,q)\sqrt{J(q)}dq.\end{equation}
We first translate (\ref{i}) into an expression involving the total and relative momentum
variables, $p'^\mu+q'^\mu$ and $p'^\mu-q'^\mu$ respectively. Define $u$ by $u(x) = 0$ if $x < 0$ and
$u(x) = 1$ if $x\geq 0$. Let $g'\eqdef g(p'^\mu,q'^\mu)$ and $s'\eqdef s(p'^\mu,q'^\mu).$ Then by the claim (7.5) of \cite{MR2728733}, we have
$$i(p,q)=\frac{1}{16p^0q^0}\int_{\rfo\times\rfo}d\Theta(p'^\mu,q'^\mu)\frac{e^{-q'^0/2}}{4\pi}\tilde{s}\sigma(\tilde{g},\theta')\delta^{(4)}(p'^\mu+q^\mu-p^\mu-q'^\mu),$$
where
$$d\Theta(p'^\mu,q'^\mu)\eqdef dp'^\mu dq'^\mu u(p'^0+q'^0)u(s'-4)\delta(s'-g'^2-4)\delta((p'^\mu+q'^\mu)(p'_\mu-q'_\mu)).$$
Thus we have lifted to an integral over $\rfo\times\rfo$ from one over $\rth\times\rth$.

Now we apply the change of variables $\bar{p}^\mu=p'^\mu+q'^\mu$ and $\bar{q}^\mu=p'^\mu-q'^\mu$. Then the Jacobian is 16. Since $q'^0=\frac{\bar{p}^0-\bar{q}^0}{2}$, we have
\begin{equation}\notag
i(p,q)= \frac{c'}{p^0q^0}\int_{\rfo\times\rfo}d\Theta(\bar{p}^\mu,\bar{q}^\mu)\tilde{s}\sigma(\tilde{g},\theta')\delta^{(4)}(q^\mu-p^\mu+\bar{q}^\mu)\exp\left(\frac{-\bar{p}^0+\bar{q}^0}{4}\right)
\end{equation} for some constant $c'>0$ (whose value can change from line to line), where
$$d\Theta(\bar{p}^\mu,\bar{q}^\mu)\eqdef d\bar{p}^\mu d\bar{q}^\mu u(\bar{p}^0)u(-\bar{p}^\mu\bar{p}_\mu-4)\delta(-\bar{p}^\mu\bar{p}_\mu-\bar{q}^\mu\bar{q}_\mu-4)\delta(\bar{p}^\mu\bar{q}_\mu).$$

We now carry out $\delta^{(4)}(q^\mu-p^\mu+\bar{q}^\mu)$ to obtain
\begin{equation}\notag
i(p,q)= \frac{c'}{p^0q^0}\int_{\rfo}d\Theta(\bar{p}^\mu)\tilde{s}\sigma(\tilde{g},\theta')\exp\left(\frac{-\bar{p}^0+p^0-q^0}{4}\right),
\end{equation}
where the measure $d\Theta(\bar{p}^\mu)$ is now equal to
$$d\Theta(\bar{p}^\mu)\eqdef d\bar{p}^\mu  u(\bar{p}^0)u(-\bar{p}^\mu\bar{p}_\mu-4)\delta(-\bar{p}^\mu\bar{p}_\mu-g^2-4)\delta(\bar{p}^\mu(p_\mu-q_\mu)).$$
Since $s=g^2+4,$ we have
\begin{multline*}
u(\bar{p}^0)\delta(-\bar{p}^\mu\bar{p}_\mu-g^2-4)=u(\bar{p}^0)\delta(-\bar{p}^\mu\bar{p}_\mu-s)\\=u(\bar{p}^0)\delta((\bar{p}^0)^2-|\bar{p}|^2-s)=\frac{\delta(\bar{p}^0-\sqrt{|\bar{p}|^2+s})}{2\sqrt{|\bar{p}|^2+s}}.
\end{multline*}
Then we carry out one integration using this delta function to obtain
\begin{multline}\notag
i(p,q)= \frac{c'}{2p^0q^0}\int_{\rth}\frac{d\bar{p}}{\bar{p}^0}u(-\bar{p}^\mu\bar{p}_\mu-4)\delta(\bar{p}^\mu(p_\mu-q_\mu))\tilde{s}\sigma(\tilde{g},\theta')\\\times\exp\left(\frac{-\sqrt{|\bar{p}|^2+s}+p^0-q^0}{4}\right),
\end{multline}
where $\bar{p}^0=\sqrt{|\bar{p}|^2+s}$. Using $s=g^2+4$ again, we have $$-\bar{p}^\mu\bar{p}_\mu-4=s-4=g^2\geq 0$$ to guarantee that $u(-\bar{p}^\mu\bar{p}_\mu-4)=1$. Thus
\begin{equation}\notag
i(p,q)= \frac{c'}{2p^0q^0}\exp\left(\frac{p^0-q^0}{4}\right)\int_{\rth}\frac{d\bar{p}}{\bar{p}^0}\delta(\bar{p}^\mu(p_\mu-q_\mu))\tilde{s}\sigma(\tilde{g},\theta')e^{\left(\frac{\bar{p}^\mu U_\mu}{4}\right)},
\end{equation}
where $\bar{p}^0=\sqrt{|\bar{p}|^2+s}$ and $U^\mu=(1,0,0,0)$.
We finish off our reduction by moving to a new Lorentz frame. We consider a Lorentz transformation $\Lambda$ which maps into the center-of-momentum system as 
$$
A_\nu\eqdef \Lambda^{\mu}{}_{\nu}(p_\mu+q_\mu) =(\sqrt{s},0,0,0),\hspace{10mm} B_\nu\eqdef -\Lambda^{\mu}{}_{\nu} (p_\mu-q_\mu)=(0,0,0,g).
$$
The explicit form of the matrix $\Lambda$ was given p. 593 of \cite{MR2728733}, and also in \cite{MR2707256}.  More precisely, we consider
\begin{equation}\label{eq.LT}
\Lambda=(\Lambda^{\mu}{}_\nu)=\left(\begin{array}{cccc}\frac{p^0+q^0}{\sqrt{s}}& -\frac{p_1+q_1}{\sqrt{s}} &-\frac{p_2+q_2}{\sqrt{s}} &-\frac{p_3+q_3}{\sqrt{s}}\\ \Lambda^1{}_0 &\Lambda^1{}_1&\Lambda^1{}_2&\Lambda^1{}_3\\ 0 & \frac{(p\times q)_1}{|p\times q|} &\frac{(p\times q)_2}{|p\times q|}&\frac{(p\times q)_3}{|p\times q|}\\ \frac{p^0-q^0}{g} &-\frac{p_1-q_1}{g}&-\frac{p_2-q_2}{g}&-\frac{p_3-q_3}{g}\end{array}\right),
\end{equation}
with the second row given by
$$\Lambda^1{}_0=\Lambda^1{}_0(p,q)=\frac{2|p\times q|}{g\sqrt{s}},$$ 
and for $i=1,2,3$ we have 
$$\Lambda^1{}_i=\Lambda^1{}_i(p,q)=\frac{2\left(p_i\{p^0+q^0p^\mu q_\mu\}+q_i\{q^0+p^0p^\mu q_\mu\}\right)}{g\sqrt{s}|p\times q|}.$$
Then, using this change of variables, we have
\begin{equation*}
\int_{\rth}\frac{d\bar{p}}{\bar{p}^0}\delta(\bar{p}^\mu(p_\mu-q_\mu))\tilde{s}\sigma(\tilde{g},\theta')e^{\left(\frac{\bar{p}^\mu U_\mu}{4}\right)}
=\int_{\rth}\frac{d\bar{p}}{\bar{p}^0}\delta(\bar{p}^\mu B_\mu)s_\Lambda\sigma(g_\Lambda,\theta_\Lambda)e^{\left(\frac{\bar{p}^\mu \bar{U}_\mu}{4}\right)}.
\end{equation*}
Note that $\frac{d\bar{p}}{\bar{p}^0}$ is Lorentz invariant.
Here $\bar{p}^0=\sqrt{|\bar{p}|^2+s}$ and $s_\Lambda$, $g_\Lambda\geq0$ are
$$
g^2_\Lambda\eqdef g^2-\frac{1}{2}A^\mu(\bar{p}_\mu-A_\mu)= g^2+\frac{1}{2}\sqrt{s}(\bar{p}^0-\sqrt{s}),
$$
where
\begin{equation}\label{slambda.def}
s_\Lambda\eqdef g^2_\Lambda+4,
\end{equation}
and
\begin{equation}\label{coslam}
\cos\theta_\Lambda\eqdef 2\frac{g^2}{g^2_\Lambda}-1.
\end{equation}
Also, $\bar{U}^\mu$ is defined as $\bar{U}^\mu= \left(\frac{p^0+q^0}{\sqrt{s}},\frac{2|p\times q|}{g\sqrt{s}},0,\frac{p^0-q^0}{g}\right)$.
We switch to polar coordinates in the form $$d\bar{p}=r^2 dr \sin\psi d\psi d\phi,\hspace{5mm} \bar{p}\eqdef r(\sin \psi \cos \phi, \sin \psi \sin \phi, \cos \psi).$$
Then we obtain $$\bar{p}^\mu B_\mu = gr\cos\psi.$$
Then the integral $i(p,q)$ is now equal to
\begin{multline}\notag
i(p,q)= \frac{c'}{2p^0q^0}\exp\left(\frac{p^0-q^0}{4}\right)\int_{0}^{2\pi}d\phi \int_{0}^{\pi}d\psi \sin\psi \\\times\int_{0}^{\infty} \frac{r^2dr}{\sqrt{r^2+s}}\delta(gr\cos\psi)s_\Lambda\sigma(g_\Lambda,\theta_\Lambda) e^{\left(\frac{\bar{p}^\mu \bar{U}_\mu}{4}\right)}.
\end{multline}
We evaluate the last delta function at $\psi = \pi/2$ to write $i(p, q)$ as
\begin{multline}\notag
i(p,q)= \frac{c'}{2gp^0q^0}\exp\left(\frac{p^0-q^0}{4}\right)  \\\times\int_{0}^{2\pi}d\phi \int_{0}^{\infty} \frac{rdr}{\sqrt{r^2+s}}s_\Lambda\sigma(g_\Lambda,\theta_\Lambda)\exp\left(-\bar{p}^0\frac{p^0+q^0}{4\sqrt{s}}+\frac{|p\times q|}{2g\sqrt{s}}r\cos\phi\right).
\end{multline}
By using the modified Bessel function of index zero given by \eqref{bessel0} we have
\begin{multline}\label{finali}
i(p,q)= \frac{c'}{2gp^0q^0}\exp\left(\frac{p^0-q^0}{4}\right)  \\\times \int_{0}^{\infty} \frac{rdr}{\sqrt{r^2+s}}s_\Lambda\sigma(g_\Lambda,\theta_\Lambda)\exp\left(-\frac{p^0+q^0}{4\sqrt{s}}\sqrt{r^2+s}\right)I_0\left(\frac{|p\times q|}{2g\sqrt{s}}r\right).
\end{multline}
Now $g_\Lambda\geq0$ is given by
\begin{equation}\label{g2}
g^2_\Lambda = g^2+\frac{1}{2}\sqrt{s}(\sqrt{r^2+s}-\sqrt{s}),
\end{equation}
with \eqref{slambda.def} and \eqref{coslam}.
So by \eqref{Ipq} we obtain a new representation of our gain term
\begin{multline}\label{final gain}
I=I_{gain}=\frac{c'}{p^0}e^{\frac{p^0}{4}}\int_{\rth}\frac{dq}{q^0}\frac{e^{-\frac{3}{4}q^0}}{g} \int_{0}^{\infty} \frac{rdr}{\sqrt{r^2+s}}s_\Lambda\sigma(g_\Lambda,\theta_\Lambda)\\\times \exp\left(-\frac{p^0+q^0}{4\sqrt{s}}\sqrt{r^2+s}\right)I_0\left(\frac{|p\times q|}{2g\sqrt{s}}r\right),
\end{multline}
We recall that $I_{loss}=0$. We will now find a different expression than $I_{loss}$ which is also equal to zero, that will provide suitable cancellation for the term in (\ref{final gain}) when we no longer assume that $\int_{\mathbb{S}^2} d\omega\  |\sigma_0(\cos\theta)| <\infty$ and $\int_{\mathbb{S}^2} d\omega\  \sigma_0(\cos\theta)=0$.

To this end, we recall the definitions \eqref{g2} and \eqref{coslam}.  This yields
\begin{equation}\label{newcos}
\cos\theta_\Lambda=\frac{2g^2}{g^2_\Lambda}-1=\frac{g^2-\frac{1}{2}\sqrt{s}(\sqrt{r^2+s}-\sqrt{s})}{g^2+\frac{1}{2}\sqrt{s}(\sqrt{r^2+s}-\sqrt{s})},
\end{equation}
using \eqref{g2} above.  Then we have
$$\frac{dg^2_\Lambda}{dr}= \frac{\sqrt{s}r}{2\sqrt{r^2+s}}.$$
By differentiating $\cos\theta_\Lambda$ with respect to $r$, we have
\begin{multline*}
\frac{d(\cos\theta_\Lambda)}{dr}
=\frac{d}{dr}\frac{g^2-\frac{1}{2}\sqrt{s}(\sqrt{r^2+s}-\sqrt{s})}{g^2_\Lambda}\\=\frac{-\frac{1}{2}\sqrt{s}\frac{2r}{2\sqrt{r^2+s}}g_\Lambda^2-(g^2-\frac{1}{2}\sqrt{s}(\sqrt{r^2+s}-\sqrt{s}))\frac{d g^2_\Lambda}{dr}  }{g^4_\Lambda}\\
=-\frac{1}{2}\sqrt{s}\frac{r}{\sqrt{r^2+s}g^2_\Lambda}-\frac{(g^2-\frac{1}{2}\sqrt{s}(\sqrt{r^2+s}-\sqrt{s})) }{g^4_\Lambda} \frac{\sqrt{s}r}{2\sqrt{r^2+s}}\\
=\frac{\sqrt{s}r}{2g^4_\Lambda\sqrt{r^2+s}}\left(-g^2_\Lambda -(g^2-\frac{1}{2}\sqrt{s}(\sqrt{r^2+s}-\sqrt{s}))\right)\\
=\frac{\sqrt{s}r}{2g^4_\Lambda\sqrt{r^2+s}}\left(-2g^2\right)=-\frac{g^2\sqrt{s}r}{g^4_\Lambda\sqrt{r^2+s}}.
\end{multline*}Therefore,
$$\frac{d(\cos\theta_\Lambda)}{dr}=-\frac{g^2\sqrt{s}r}{g^4_\Lambda\sqrt{r^2+s}}.$$
Since we have assumed that $$\int_{-1}^1d(\cos\theta_\Lambda) \sigma_0(\cos\theta_\Lambda)=0,$$ we further have
$$
\int_{0}^{\infty}dr \frac{g^2\sqrt{s}r}{g^4_\Lambda\sqrt{r^2+s}}\sigma_0(\cos\theta_\Lambda)=0,
$$ as $\cos\theta_\Lambda = 1\text{ and }-1$ correspond to $r=0$ and $r=\infty$ respectively, by \eqref{newcos}.  Thus we obtain
\begin{equation}
\notag
\frac{c'}{p^0}e^{\frac{p^0}{4}}\int_{\rth}\frac{dq}{q^0}\frac{e^{-\frac{3}{4}q^0}}{g} \int_{0}^\infty \frac{rdr}{\sqrt{r^2+s}}s\Phi(g)\sigma_0(\cos \theta_\Lambda)\frac{g^4}{g^4_\Lambda}\exp\left(-\frac{p^0+q^0}{4\sqrt{s}}\sqrt{s}\right)=0.
\end{equation}
Subtracting this zero integral from (\ref{final gain}), we obtain
\begin{multline}\label{dual}
I=\frac{c'}{p^0}e^{\frac{p^0}{4}}\int_{\rth}\frac{dq}{q^0}\frac{e^{-\frac{3}{4}q^0}}{g}\int_{0}^\infty \frac{rdr}{\sqrt{r^2+s}}s_\Lambda\sigma(g_\Lambda,\theta_\Lambda)\\\times\Big[\exp\left(-\frac{p^0+q^0}{4\sqrt{s}}\sqrt{r^2+s}\right)I_0\left(\frac{|p\times q|}{2g\sqrt{s}}r\right)\\ -\exp\left(-\frac{p^0+q^0}{4}\right)\frac{s\Phi(g)g^4}{s_\Lambda  \Phi(g_\Lambda)g^4_\Lambda}\Big].
\end{multline}
This is equal to the original integral $I=-\tilde{\zeta}(p)$ when the mean value of $\sigma_0$ is zero.

 We also note that \eqref{dual} also holds for \eqref{tildezeta} even when the mean value of $\sigma_0$ is not zero.  
	Suppose that $\int_{\mathbb{S}^2} d w \hspace{1mm} |\sigma_0(\theta)| <\infty$ and that $\int_{\mathbb{S}^2} d w \hspace{1mm} \sigma_0(\theta) = 2\pi c_0\neq0$.
	Define 
	$$
	\sigma_0^\epsilon(\theta) = \sigma_0(\theta)-1_{[1-\epsilon,1]}(\cos\theta)\int_{-1}^1dt'\frac{\sigma_0(t')}{\epsilon}. 
	$$
	Then, we have $\int_{-1}^1\sigma_0^\epsilon(\theta)d(\cos\theta)=0$ vanishing on $\omega\in \mathbb{S}^2$.
	Now, define 
	\begin{align*}
	&\tilde{\zeta}^\epsilon(p)
	=\int_{\rth\times\mathbb{S}^2} v_{\text{\o}} \Phi(g)\sigma^\epsilon_0(\theta) \sqrt{J(q)}\left(\sqrt{J(q)}-\sqrt{J(q')}\right)dqd\omega.
	\end{align*} Then, also using  \eqref{tildezeta}, we have
	\begin{multline}
	\label{notmeanzero}
	\left|\tilde{\zeta}(p) -\tilde{\zeta}^\epsilon(p) \right|
	\\
	=\bigg|c_0 \int_{\rth\times\mathbb{S}^2} v_{\text{\o}} \Phi(g) \sqrt{J(q)}\left(\sqrt{J(q)}-\sqrt{J(q')}\right) \frac{1_{[1-\epsilon,1]}(\cos\theta)}{\epsilon} dqd\omega\bigg|.
	\end{multline}
 If $\cos\theta=1$, by Remark \ref{angle.remark} and e.g. \eqref{cosine.angle.formula}, \eqref{gbar} and \eqref{conservation}  we have $p'^\mu=p^\mu$ and $q'^\mu=q^\mu$. 
	Thus, as $\epsilon \rightarrow 0$, the difference term in   (\ref{notmeanzero})$\rightarrow 0$ as $\sqrt{J(q)}-\sqrt{J(q')}$ has a higher order cancellation and hence the integrand vanishes on the set $\cos\theta=1$. 
 By the higher-order cancellation, an additional cutoff argument shows that the identity \eqref{dual} holds for the noncutoff kernel $\sigma_0$ from \eqref{angassumption}.

\subsection{First representation of $\tilde{\zeta}$}
We will now further split $\tilde{\zeta}=\zeta_0+\zetaL$. From \eqref{dual} for simplicity we write
$$
-I=\frac{c'}{p^0}e^{\frac{p^0}{4}}\int_{\rth}\frac{dq}{q^0}\frac{e^{-\frac{3}{4}q^0}}{g}\newK(p,q),
$$
where
\begin{multline}\notag
\newK(p,q) \eqdef \int_{0}^\infty \frac{rdr}{\sqrt{r^2+s}}s_\Lambda\sigma(g_\Lambda,\theta_\Lambda)
\\
\times
\Big[\exp\left(-\frac{p^0+q^0}{4}\right)\frac{s\Phi(g)g^4}{s_\Lambda  \Phi(g_\Lambda)g^4_\Lambda} - \exp\left(-\frac{p^0+q^0}{4\sqrt{s}}\sqrt{r^2+s}\right)I_0\left(\frac{|p\times q|}{2g\sqrt{s}}r\right) \Big].
\end{multline}
Notice that both terms of the integral converge for large $r\ge 1$.  We further split
\begin{multline}\notag
\exp\left(-\frac{p^0+q^0}{4}\right)\frac{s\Phi(g)g^4}{s_\Lambda  \Phi(g_\Lambda)g^4_\Lambda}-\exp\left(-\frac{p^0+q^0}{4\sqrt{s}}\sqrt{r^2+s}\right)I_0\left(\frac{|p\times q|}{2g\sqrt{s}}r\right)
\\
=\left(\exp\left(-\frac{p^0+q^0}{4}\right) - \exp\left(-\frac{p^0+q^0}{4\sqrt{s}}\sqrt{r^2+s}\right)I_0\left(\frac{|p\times q|}{2g\sqrt{s}}r\right)\right) \frac{s\Phi(g)g^4}{s_\Lambda  \Phi(g_\Lambda)g^4_\Lambda}\\
+\exp\left(-\frac{p^0+q^0}{4\sqrt{s}}\sqrt{r^2+s}\right) I_0\left(\frac{|p\times q|}{2g\sqrt{s}}r\right)
\left( \frac{s\Phi(g)g^4}{s_\Lambda  \Phi(g_\Lambda)g^4_\Lambda} -1\right).
\end{multline}
This motivates the following splitting of $\tilde{\zeta}=\zeta_0+\zetaL$ with
\begin{multline}\label{zeta0B.appendix}
\zeta_0 \eqdef \frac{c'}{p^0}e^{\frac{p^0}{4}}\int_{\rth}\frac{dq}{q^0}\frac{e^{-\frac{3}{4}q^0}}{g}\int_{0}^\infty \frac{rdr}{\sqrt{r^2+s}}s_\Lambda\sigma(g_\Lambda,\theta_\Lambda) \frac{s\Phi(g)g^4}{s_\Lambda  \Phi(g_\Lambda)g^4_\Lambda}
\\
\times
\Big[\exp\left(-\frac{p^0+q^0}{4}\right) - \exp\left(-\frac{p^0+q^0}{4\sqrt{s}}\sqrt{r^2+s}\right)I_0\left(\frac{|p\times q|}{2g\sqrt{s}}r\right) \Big],
\end{multline}and
\begin{multline}\label{zetaLB}
\zetaL \eqdef \frac{c'}{p^0}e^{\frac{p^0}{4}}\int_{\rth}\frac{dq}{q^0}\frac{e^{-\frac{3}{4}q^0}}{g}\int_{0}^\infty \frac{rdr}{\sqrt{r^2+s}}s_\Lambda\sigma(g_\Lambda,\theta_\Lambda)   \\
\times
\exp\left(-\frac{p^0+q^0}{4\sqrt{s}}\sqrt{r^2+s}\right) I_0\left(\frac{|p\times q|}{2g\sqrt{s}}r\right)
\left( \frac{s\Phi(g)g^4}{s_\Lambda  \Phi(g_\Lambda)g^4_\Lambda} -1\right).
\end{multline}
This completes the derivation of our first representation of $\tilde{\zeta}(p)$.

\subsection{Derivation of an alternative representation of $\tilde{\zeta}(p)$}\label{sec:alternative.deriv}
For a fixed $p\in\rth$, we would like to have an alternative representation of \eqref{tildezeta}:
\begin{equation}\label{Irepresentation}
 -\tilde{\zeta}(p)=	\int_{\rth\times\mathbb{S}^2} v_{\text{\o}} \sigma(g,\theta) \sqrt{J(q)}\left(\sqrt{J(q')}-\sqrt{J(q)}\right)dqd\omega\eqdef I_{gain}-I_{loss}.
\end{equation}
Then exactly as previously we can derive \eqref{final gain} for the term $I_{gain}$.  We will now find an alternative expression for $I_{loss}$ that will provide suitable cancellation for the term in (\ref{final gain}). 

To this end, using the definition of $I_{loss}$ from \eqref{Irepresentation} yields
\begin{equation}\label{Ilossrepresentation}
I_{loss}=\int_{\rth}dq\int_{\mathbb{S}^2}d\omega\  v_{\text{\o}} \sigma(g,\theta)J(q).
\end{equation}Since the gain term representation \eqref{Igainrepresentation} results in \eqref{i} and \eqref{Ipq}, the loss term \eqref{Ilossrepresentation} would yield
\begin{equation}\notag
I_{loss}=\int_{\rth}i_{loss}(p,q)dq,
\end{equation}where 
\begin{equation}\label{iloss}
i_{loss}(p,q)\eqdef \frac{1}{p^0q^0} \int_{\rth}\frac{dp'}{p'^0}\int_{\rth}\frac{dq'}{q'^0}J(q')\tilde{s}\sigma(\tilde{g},\theta')\delta^{(4)}(p'^\mu+q^\mu-p^\mu-q'^\mu),
\end{equation} 
by following the same argument between \eqref{Igainrepresentation} and \eqref{Ipq}. Note that we have exchanged $q$ and $q'$ variables in the procedure. Then we can easily see that the only difference between \eqref{i} and \eqref{iloss} is the power on the term $J(q')$; i.e., the power on $J(q')$ in $i_{loss}(p,q)$ is twice of that in $i(p,q)$. Therefore, the same derivation results in the new representation of the loss term similar to  \eqref{finali}:
\begin{multline}\label{finaliloss}
i_{loss}(p,q)= \frac{c'}{gp^0q^0}\exp\left(\frac{p^0-q^0}{2}\right)  \\\times \int_{0}^{\infty} \frac{rdr}{\sqrt{r^2+s}}s_\Lambda\sigma(g_\Lambda,\theta_\Lambda)\exp\left(-\frac{p^0+q^0}{2\sqrt{s}}\sqrt{r^2+s}\right)I_0\left(\frac{|p\times q|}{g\sqrt{s}}r\right).
\end{multline}
In particular we have
\begin{multline}\label{final loss}
I_{loss}=\frac{c'}{p^0}e^{\frac{p^0}{2}}\int_{\rth}\frac{dq}{q^0}\frac{e^{-\frac{1}{2}q^0}}{g} \int_{0}^{\infty} \frac{rdr}{\sqrt{r^2+s}}s_\Lambda\sigma(g_\Lambda,\theta_\Lambda)\\\times \exp\left(-\frac{p^0+q^0}{2\sqrt{s}}\sqrt{r^2+s}\right)I_0\left(\frac{|p\times q|}{g\sqrt{s}}r\right).
\end{multline}
Subtracting this integral from (\ref{final gain}), we obtain
\begin{multline}\notag
I=\frac{c'}{p^0}e^{\frac{p^0}{4}}\int_{\rth}\frac{dq}{q^0}\frac{e^{-\frac{3}{4}q^0}}{g}\int_{0}^\infty \frac{rdr}{\sqrt{r^2+s}}s_\Lambda\sigma(g_\Lambda,\theta_\Lambda)\\\times\Big[\exp\left(-\frac{p^0+q^0}{4\sqrt{s}}\sqrt{r^2+s}\right)I_0\left(\frac{|p\times q|}{2g\sqrt{s}}r\right)\\ -\exp\left(\frac{p^0+q^0}{4}\right)\exp\left(-\frac{p^0+q^0}{2\sqrt{s}}\sqrt{r^2+s}\right)I_0\left(\frac{|p\times q|}{g\sqrt{s}}r\right)\Big].
\end{multline}
This is equal to the original integral $I=-\tilde{\zeta}(p)$. 
The representation above also holds when $\sigma_0$ does not have mean zero or does not have a bounded integral, as we discussed in \eqref{notmeanzero}.

As in \eqref{zeta0By} and \eqref{zetaLBy}, we take the change of variables $r\mapsto y=\frac{r}{\sqrt{s}}$ in $I$ above. 
Then, we can write $\tilde{\zeta}$ as follows
\begin{multline}\label{eq:tildezetanew1.appendix}
\tilde{\zeta}(p)=\frac{c'}{\pi p^0}\int_{\rth}\frac{dq}{q^0}\frac{e^{-q^0}\sqrt{s}}{g}\int_{0}^\infty \frac{ydy}{\sqrt{y^2+1}}s_\Lambda\sigma(g_\Lambda,\theta_\Lambda)\int_0^{\pi} d\phi\\\times\Big[\exp (2l-2l\sqrt{y^2+1}+2jy\cos\phi)-\exp(l-l\sqrt{y^2+1} +jy\cos\phi)\Big],
\end{multline}
where we use the notations \eqref{lj} and \eqref{g2y.variable}. This completes the derivation.

\subsection{Proofs of the pointwise estimates}\label{sec:pointwiseProof}
We will now give the proofs of Lemma \ref{lem:useful.ests} and Lemma \ref{lem:integral.ests}.

\begin{proof}[Proof of Lemma \ref{lem:useful.ests}]
The proof of \eqref{s.ge.g2} is direct, and \eqref{s.le.pq} follows from \eqref{s} and the Cauchy-Schwartz inequality.  Then \eqref{g.ge.lower}, \eqref{g.ge.2lower} and \eqref{g.le.upper} follow from \eqref{g.ineq.sharp}.  For \eqref{p0q0.le.pq} notice that that
$$
p^0 - q^0 = \frac{|p|^2 - |q|^2}{p^0 + q^0} = \frac{(p - q) \cdot (p+q)}{p^0 + q^0} 
\le |p - q|.
$$
Then \eqref{p0.plus.q0.le.p0q0} is automatic.

Now using \eqref{lj} then equation \eqref{j.le.l} follows from \eqref{g.ge.2lower}.  And \eqref{l.upper.ineq} is automatic.  The proof of \eqref{l2j2} requires some development and is from \cite{GS3}.  Now using \eqref{lj} we have
$$
    l^2-j^2 = \left(\frac{p^0+q^0}{4}\right)^2-\left(\frac{|p\times q|}{2g}\right)^2
    =\frac{(p^0+q^0)^2g^2-4|p\times q|^2}{16g^2}.$$
    By the definition of $g$ in \eqref{g}, we have
\begin{multline*}
    (p^0+q^0)^2g^2-4|p\times q|^2
    =(p^0+q^0)^2(-2-2p^\mu q_\mu)-4|p\times q|^2\\
    =(2+|p|^2+|q|^2+2p^0q^0)(-2+2p^0q^0-2p\cdot q)-4|p\times q|^2\\
    =(2+|p|^2+|q|^2+2p^0q^0)(-2-|p|^2-|q|^2+2p^0q^0+|p-q|^2)-4|p\times q|^2\\
    =(2p^0q^0)^2-(2+|p|^2+|q|^2)^2 +(2+|p|^2+|q|^2+2p^0q^0)|p-q|^2-4|p\times q|^2\\
    =(2p^0q^0)^2-(2+|p|^2+|q|^2)^2 +(p^0+q^0)^2|p-q|^2-4|p\times q|^2.
\end{multline*}
We calculate that
\begin{multline*}
    (2p^0q^0)^2-4|p\times q|^2=4+4|p|^2|q|^2+4|p|^2+4|q|^2-4|p\times q|^2\\
    =4+4(p\cdot q)^2+4|p|^2+4|q|^2,
\end{multline*}
and$$
    (2+|p|^2+|q|^2)^2=4+|p|^4+|q|^4+4|p|^2+4|q|^2+2|p|^2|q|^2.$$
Thus, using also \eqref{s}, we have
\begin{multline*}
    (2p^0q^0)^2-(2+|p|^2+|q|^2)^2 +(p^0+q^0)^2|p-q|^2-4|p\times q|^2\\
    =(p^0+q^0)^2|p-q|^2-|p|^4-|q|^4+4(p\cdot q)^2-2|p|^2|q|^2\\
    =(p^0+q^0)^2|p-q|^2-(|p|^2+|q|^2)^2 +4(p\cdot q)^2\\
    =(p^0+q^0)^2|p-q|^2 -(|p|^2+|q|^2+2p\cdot q)(|p|^2+|q|^2-2p\cdot q)\\
    =(p^0+q^0)^2|p-q|^2-|p+q|^2|p-q|^2
    =s|p-q|^2.
\end{multline*}
Therefore, we have \eqref{l2j2}.   Then \eqref{l2j2size} follows from \eqref{l2j2} and \eqref{s.ge.g2}.

We will now prove \eqref{ineq.gL.here}.  The upper bound of $g_\Lambda^2$ in \eqref{ineq.gL.here} follows from \eqref{g2y.variable} with \eqref{s.ge.g2}.  The lower bound of $g_\Lambda^2$ in \eqref{ineq.gL.here} follows from \eqref{glambda.calc} and \eqref{s.ge.g2}.  
\end{proof}

\begin{proof}[Proof of Lemma \ref{lem:integral.ests}]
We will start with \eqref{smally.lemma}.  From \eqref{bessel0} and \eqref{int.Kgamma} we have 
 $$
|\bar{K}_\gamma(l,j)|  \lesssim
\max_{0\leq x\leq 1}\exp(-l\sqrt{x^2+1}+jx).
$$ 
The maximum of the function $h(x)\eqdef -l\sqrt{x^2+1}+jx$ occurs at $x=0$, $x=1$, or $x=x_0=\frac{j}{\sqrt{l^2-j^2}}$ where $h'(x_0)=0$. Note that $ h(x_0)=-\sqrt{l^2-j^2}.$ When $x=1$, we have $$h(1)=-\sqrt{2}l+j\leq -\sqrt{l^2-j^2}.$$ Thus, we conclude \eqref{max.bound} and \eqref{smally.lemma}.
	
Then \eqref{J2.special} is a known integral that can be calculated exactly \cite{Gradshteyn:1702455} as  \eqref{J2.lemma}.  Further  \eqref{k2lj.lemma} is calculated during the proof of Corollary 2  in \cite[Corollary 2, pp.~323]{GS3}.  In particular we can obtain \eqref{k2lj.lemma} from $\tilde{K}_2(l,j)= \partial_l^2 J_2(l,j)$.
\end{proof}

\providecommand{\bysame}{\leavevmode\hbox to3em{\hrulefill}\thinspace}
\providecommand{\href}[2]{#2}

\end{document}